\renewcommand{\comm@todo@mpar}[1]{}
\def\divider{%
  \leavevmode\leaders\hrule height 0.6ex depth \dimexpr0.4pt-0.6ex\hfill%
  \kern0pt%
}
\g@addto@macro\UrlBreaks{\do\-}
\def\donotbreak{\@beginparpenalty=10000}
\newcommand\set[1]{{\{ }{#1}{\} }}
\newcommand\pair[2]{\left<{#1}, {#2}\right>}
\newcommand\reals{\mathbb{R}}
\newcommand\naturals{\mathbb{N}}
\newcommand\dom{\mathsf{dom}}
\newcommand\fst{\mathsf{fst}}
\newcommand\snd{\mathsf{snd}}
\newcommand\dcompose{\mathbin{;}}
\newcommand\id[1][{}]{\mathrm{id}_{#1}}
\newcommand\Category[1]{\ensuremath{\mathbf{#1}}}
\newcommand\Set{\Category{Set}}
\newcommand\dif{\mathrm{d}}
\newcommand{\Meas}{\Category{Meas}}
\newcommand\Sbs{\Category{Sbs}}
\newcommand\counting{\#}
\newcommand\lebesgue{\mathbb{U}}
\newcommand\uniform[1]{\mathbb{U}_{#1}}
\newcommand\lli{\stackrel{\infty}{\ll}}
\newcommand\boti{<\mkern-9mu\infty\;}
\newcommand\sample{\mathbf{sample}}
\newcommand\score{\mathbf{score}}
\newcommand\dett{\mathsf{det}}
\newcommand\floor{\mathsf{floor}}
\newcommand\Qbs{\Category{Qbs}}
\newcommand\mutsing[2]{{#1} \,\, \bot\,\, {#2}}
\newcommand\kact[2]{{#1}\triangleright{#2}}
\newcommand\letrecin[3]{\mathbf{letrec}\,#1=#2\,\mathbf{in}\,#3}
\newcommand\ifelse[3]{\mathbf{if}\,#1\,\mathbf{then}\,#2\,\mathbf{else}\,#3}
\newcommand\letin[3]{\mathbf{let}\,#1=#2\,\mathbf{in}\,#3}
\theoremstyle{plain}
\newtheorem{lemma}{Lemma}
\newtheorem{theorem}{Theorem}
\newtheorem{proposition}{Proposition}
\newtheorem{corollary}{Corollary}
\theoremstyle{definition}
\newtheorem{definition}{Definition}
\newtheorem{example}{Example}
\newtheorem*{correctness-assumption}{Correctness Assumption}
\theoremstyle{remark}
\newtheorem{remark}{Remark}
\newenvironment{proof*}
    {\begin{proof}[Proof \textup{(Sketch)}]}
    {\end{proof}}
\title{On S-Finite Measures and Kernels}
\author{Matthijs V\'ak\'ar and Luke Ong}
\date{}
\begin{document}

\maketitle

\ddmmyyyydate
\renewcommand{\dateseparator}{-}
\newcommand{\Null}{\mathcal{N}}

%[Work in progress! \texttt{Begun September 2017. Draft of \today, \currenttime}]
\begin{abstract}
In this note, we develop some of the basic theory of s-finite (measures and)
kernels, a little-studied class that Staton has recently argued convincingly
to be precisely the semantic counterpart of (first-order) probabilistic programs.
We discuss their Carath\'eodory extension and
extend Staton's analysis of their product measures.
We give various characterisations of such kernels and discuss
their relationship to the more commonly studied classes of $\sigma$-finite, subprobability
and probability kernels. 
We use these characterisations to establish suitable Radon-Nikod\'ym, Lebesgue
decomposition and disintegration theorems for s-finite kernels.
We discuss s-finite analogues of the classical randomisation lemma for probability
kernels.
Throughout, we give some examples to explain the connection with (first-order)
probabilistic programming.
Finally, we briefly explore how some of these results extend to quasi-Borel spaces,
and hence how they apply to higher-order probabilistic programming. 
\end{abstract}

\clearpage
\tableofcontents
\clearpage

\section{Introduction}
With increasing computational power and increasingly large datasets  
available, probabilistic computation is becoming increasingly tractable and  
interesting.
As society increasingly depends on probabilistic 
algorithms, for instance through various safety-critical machine learning 
applications such as self-driving cars, it becomes more pressing to 
guarantee the correctness of such algorithms.
At the same time, it is notoriously difficult to achieve good test coverage 
for programs that involve probabilistic branching. 
This suggests that proof-based correctness arguments could be of particular 
value in this domain.

A foundational question that needs to be answered for these purposes is what 
the semantics of a probabilistic program should be.
In particular, what program transformations (for instance, compiler 
optimisations or inference algorithms) on probabilistic programs should be 
considered semantics-preserving, and thus safe?

The goal of probabilistic programming is to allow users to specify
statistical models and to interpret data in the light of these models by
performing (approximate) Bayesian inference.
The key idea is to separate the specification of the model from that of the
inference algorithm.
A probabilistic programming language supplies general-purpose inference
algorithms that work for larger classes of models, so that the user does
not have to manually implement inference for each model she writes, freeing
her to put more energy into accurate modelling.

Intuitively, the semantics of a statistical model is a certain unnormalised
measure, in the case of closed programs (which do not take any inputs), or a
certain unnormalised (Markov) kernel, in the case of open programs (which take
inputs).
The idea then is that the semantics of an inference algorithm is a procedure
that attempts to (approximately) normalise such a measure representing a
(closed) model.
In practice, this may be done by directly computing an approximate normalised
distribution or by drawing samples from the approximate normalised distribution;
the normalising constant may or may not be computed in the inference algorithm.

To make this more precise, we need to specify what class of measures
statistical models defined by probabilistic programs correspond to.
In practice, infinite measures like the Lebesgue measure are often used
in programs as improper priors.
Moreover, as argued in \citep{staton2017commutative}, infinite measures are in fact
unavoidable in any probabilistic programming language with probability
distributions over natural or real numbers and soft constraints, as they can be
encoded through importance sampling.
This introduces the challenge of finding a suitable class of possibly infinite
measures and kernels that is closed under composition.
Importantly, the commonly studied class of $\sigma$-finite measures and kernels
is not closed under composition!
\cite{staton2017commutative} has recently argued convincingly that closed
(first-order) probabilistic programs correspond precisely to s-finite measures
and kernels, a strictly larger class of unnormalised measures and kernels.

This result, emphasising the importance of s-finite measures and kernels, is
particularly striking because they have hardly been studied in the past, as
probabilists have focused their energy on the more limited $\sigma$-finite class.
Therefore, much of the foundational theory in the s-finite setting remains to be
established.

We hope to make a contribution to such a theory in this note.
In particular, we give various characterisations of s-finite measures and
kernels and prove Radon-Nikod\'ym, Lebesgue decomposition, disintegration and
randomisation theorems.
Throughout, we take a more compositional approach than is conventionally taken,
focusing on kernels rather than mere measures where possible, in the hope that
the results will ultimately be applicable to a modular analysis of probabilistic
programs.

For instance, as we shall see, the Radon-Nikod\'ym theorem gives the precise
conditions under which transformations of probabilistic programs known as
importance sampling and rejection sampling are valid, the disintegration theorem
gives the theoretical foundation for when exact Bayesian inference through
symbolic disintegration as described in \citep{shan2017exact} is possible,
and the randomisation lemma demonstrates how any s-finite kernel
(hence any probabilistic program) can be implemented as a combination of
a single random number generator with some reweighting followed by a pure
deterministic program.

\begin{remark}
NB: In this paper, we use the convention that $\infty\cdot 0=0$ and $
\infty\cdot r=\infty$ for any other $r\in (0,\infty]$.
Similarly, we shall use the convention that $r/\infty=0$ for any
$r\in [0,\infty)$, while $\infty/\infty$ is undefined.
\end{remark}

\section{Recap: Measures and Kernels}
We give a very brief recap of the basic definitions of measure theory. 
A \emph{measurable space} $(|X|,\Sigma_X)$ (often we simply write $X$ for the 
pair) is a set $|X|$ equipped with a countably complete Boolean subalgebra 
$\Sigma_X\subseteq \mathcal{P}|X|$ of its power set, called the
\emph{$\sigma$-algebra} of measurable subsets. 
Every countable set is a measurable space in a canonical way, using the 
\emph{discrete $\sigma$-algebra}: every subset is measurable. 
More generally, every topological space $X$ has a canonical $\sigma$-algebra 
(called the Borel $\sigma$-algebra) which is generated by its open 
sets\footnote{
This can be thought of as the smallest classical logic generated by the 
intuitionistic logic represented by the (open sets of the) topological 
space.}. 
This gives a canonical $\sigma$-algebra on, for instance, $\reals$ and 
$[0,\infty]$.

A \emph{measurable function} $f:X\to Y$ is a function from $|X|$ to $|Y|$ such 
that $V\in \Sigma_Y\Rightarrow f^{-1}(V)\in\Sigma_X$. 
We write $\Meas$ for the \emph{category of measurable spaces and measurable 
functions}.

$\Meas$ is well-known to be \emph{complete and cocomplete}: its limits and 
colimits are computed as in $\Set$ and equipped with the initial 
$\sigma$-algebra and final $\sigma$-algebra of the (co)limit diagram, 
respectively.
Any subset $V\subseteq X$ of a measurable space $X$ has a canonical 
$\sigma$-algebra: $\Sigma_V:=\{U\cap V\mid U\in\Sigma_X\}$, called the 
\emph{subspace $\sigma$-algebra}. 
In particular, we can equip set-theoretic \emph{equalizers} with the subspace 
$\sigma$-algebra to get equalizers in $\Meas$. 
We can construct products using the set-theoretic product together with the 
\emph{product $\sigma$-algebra}: the smallest $\sigma$-algebra on 
$\prod_{i\in I}X_i$ generated by $\bigcup_{i\in I}\{\pi_i^{-1}(V)\mid  
V\in\Sigma_{X_i}\}$. 
Note that for countable $I$, this corresponds to the smallest 
$\sigma$-algebra generated by $\{\Pi_{i\in I} V_i\mid \forall_{i\in I} 
V_i\in\Sigma_{X_i}\}$.
The \emph{coequalizer} of $f,g:X\to Y$ is constructed as the set-theoretic 
one $q:Y\to Z$ equipped with the $\sigma$-algebra $\{V\subseteq Z\mid 
q^{-1}(V)\in\Sigma_Y\}$.
We can construct coproducts using the set-theoretic disjoint union together 
with the \emph{coproduct $\sigma$-algebra}: $\{V\subseteq \sum_{i\in 
I}X_i\mid \forall_{i\in I} V\cap X_i\in \Sigma_{X_i}\}$. Note that for 
countable $I$, this is equivalently the collection of sets
$\{\bigcup_{i\in I}\langle i,V_i\rangle \mid \forall_{i\in I} V_i\in\Sigma_{X_i}\}$.
We note that countable coproducts \emph{distribute} over finite products.
 
Crucially, the category of measurable spaces is \emph{not cartesian closed}.
In particular, there is no measurable space structure on the set 
$\Meas(\reals,\reals)$ making the evaluation map
\begin{align*}
\Meas(\reals, \reals)\times \reals & \to \reals\\
\langle f, r\rangle & \mapsto f(r)
\end{align*}
measurable  
\citep{aumann1961borel}, which makes it notoriously challenging to give a 
semantics for higher-order probabilistic programming with continuous 
distributions \citep{heunen2017convenient}.

Every $V\in\Sigma_X$ induces a measurable function $\chi_V:X\to\{0,1\}$, 
called its \emph{characteristic function}.
In fact, the assignment $\Sigma_X\to\Meas(X,[0,\infty])$; $V\mapsto \chi_V$ 
is linear (in the sense that it sends binary disjoint unions to binary sums) 
and Scott continuous (in the sense that it preserves suprema of
$\omega$-chains).
It is an important result in measure theory that $\Meas(X,[0,\infty])$ is 
closed under suprema of $\omega$-chains and $[0,\infty]$-linear combinations
(also called countable $[0,\infty]$-linear combinations) and that every
measurable function $f:X\to [0,\infty]$ is, in fact, the supremum of an
$\omega$-chain of $[0,\infty]$-linear combinations (pointwise convergence)
of characteristic functions \citep[Lemma 2.2.11]{pollard2002user}.
This result is usually known as the approximation by \emph{simple functions}. 

A \emph{measure} $\mu$ on a measurable space $X$ is a linear Scott continuous 
function $\mu:\Sigma_X\to[0,\infty]$ between the $\omega$-cpos. We call 
$U\in\Sigma_X$ such that $\mu(U)=0$ a \emph{($\mu$-)null set}.
We say that a measurable predicate $\chi_A$ on $X$ holds
($\mu$-)\emph{almost everywhere} if $\mu(X\setminus A)=0$.
Because every measurable function is a countable $[0,\infty]$-linear combination
of characteristic functions and $[0,\infty]$ is closed under countable 
$[0,\infty]$-linear combinations, we have a unique linear Scott continuous 
extension along the inclusion of $\Sigma_X$ into the set of measurable 
functions from $X$ to $[0,\infty]$:
\[
\begin{tikzcd}[column sep=large, row sep=large]
\Sigma_X \arrow[r, "\mu"] \arrow[d, hook, "{\chi_{(-)}}"'] & {[0,\infty]} \\
\Meas(X,[0,\infty]) \arrow[ur, "{\quad\int_X \mu(\dif x) (-)(x)}"'] & {}
\end{tikzcd}
\]
called the \emph{integral} with respect to $\mu$ \citep{schilling2017measures}.

We define a \emph{measurable partial function} $X\rightharpoonup Y$ as a 
measurable function $X\to Y+\{\bot\}$. 
A \emph{kernel} from $X$ to $Y$ is a map 
$X\times\Sigma_Y\to[0,\infty]$ that is measurable in its first argument and 
a measure in its second. 
We will sometimes write $k:X\leadsto Y$ to indicate that $k$ is a kernel from
$X$ to $Y$.
We identify measures on $Y$ with kernels from $1$ to $Y$. 
Measurable functions $f:X\times Y \to [0,\infty]$ act on kernels $k:X\times 
\Sigma_Y\to[0,\infty]$ by a canonical action:
$$
{\kact{k}{f}(x,V)} := \int_V k(x,\dif y) \, f(x,y).
$$
The result is still a measure in its second argument because of linearity and 
Scott continuity of integration and scalar multiplication
$[0,\infty]\times[0,\infty]\to[0,\infty]$. 
However, in general, $\kact{k}{f}$ may fail to be measurable in its first
argument, so it is not always strictly a kernel
\footnote{For instance, take $X,Y=\reals$,
$k(x,V\neq \emptyset)=\infty$ and $k(x,\emptyset)=0$
and $f=\chi_W$ for
$W\in \Sigma_{\reals\times\reals}$ such that $\fst(W)\notin \Sigma_\reals$
(whose existence is a classical result in descriptive set theory
\citep{kechris2012classical}, in suitable models of ZF).
Then, $\kact{k}{f}(x,Y)=\int_Y k(x,\dif y) f(x,y)=\chi_{\fst(W)}(x)\cdot
\infty$, which is not measurable by construction of $W$.
}, though, as we shall see later, for well-behaved subclasses
of kernels $k$ (including s-finite kernels),
$\kact{k}{ f}$ is in fact a kernel.

We can \emph{compose kernels} $k:X\times\Sigma_Y\to[0,\infty]$, 
$l:Y\times\Sigma_Z\to[0,\infty]$ by pointwise integration:
$$
(k \dcompose l)(x,V):=\kact{k}{ l(-,V)}(x,Y)=\int_Y k(x,\dif y) \,l(y,V).
$$
For the reasons outlined above, $k\dcompose l$ is a measure in its second
argument.
It also turns out to be measurable in its first argument, as $l(-,V)$ is not
a function of $x$.
Indeed, we can write $l(-,V)$, being a measurable function, as a 
countable linear combination of characteristic functions $\chi_{V_i}$, 
$i\in\naturals$, and use the countable linearity of the integral to obtain that
$$
(k\dcompose l)(x,V)
= \int_Y k(x,\dif 
y)\sum_{i\in\naturals} w_i\chi_{V_i}(y)
=\sum_{i\in\naturals} w_i \int_Y k(x,\dif 
y)\chi_{V_i}(y)
=\sum_{i\in \naturals}w_i k(x, V_i),
$$
i.e.\ a countable linear combination of functions that are measurable in $x$, 
which shows that $k\dcompose l$ is measurable in its first argument and hence 
a kernel.

Similarly, given a kernel $k:X\times\Sigma_Y\to [0,\infty]$ and a measurable 
function $f:Y\to Z$, we can define a \emph{pushforward kernel} 
$f_*k:X\times\Sigma_Z\to[0,\infty]$ by $(f_*k)(x,V):=k(x,f^{-1}(V))$; given a 
measurable function $g:W\to X$, we can define a \emph{pullback kernel} 
{$g^*k:W\times\Sigma_Y\to[0,\infty]$ by $(g^* 
k)(w,V):=k(g(w),V)$.}
We can also \emph{pull back a random variable\footnote{We shall sometimes
use the term random variable as a synonym for measurable function, in particular
if the codomain is (a subset of) $\reals^n$ for some $n\in\naturals$.}
$f:Y\to [0,\infty]$ along a 
kernel} $k:X\times\Sigma_Y\to [0,\infty]$ to give $k^* f: X\to[0,\infty];$ 
$k^*f(x):=\int_Y k(x,\dif y) f(y)$.

For example, we can define the \emph{point mass measure} (or \emph{Dirac 
measure}) $\delta_x$ for every $x\in X$, as $\delta_x(U):=[x\in U]$. 
Furthermore, for $W \in \Sigma_X$, we can construct a \emph{counting measure} on 
$X$ (w.r.t.~$W$), $\counting_W:=\sum_{x\in W}\delta_x = 
\sup_{I\subseteq_{\mathrm{fin}} W}\sum_{i\in I}\delta_i$.
Note that $\counting_W(U)=\infty$ iff $U \cap W$ is infinite.
On $\reals$, we have a unique measure $\lebesgue$, called the \emph{Lebesgue 
measure}, for which $\lebesgue([a,b])=b-a$ \citep{schilling2017measures}.

Given a measure $\mu$ on $X$ and a kernel $\nu$ from $X$ to $Y$, we call a 
measure $\Psi$ on $X\times Y$ a \emph{product measure} of $\mu$ and $\nu$ if 
$\Psi(U\times V)=\int_U\mu(\dif x) \nu(x,V)$, for all
$U\in\Sigma_X$ and $V\in\Sigma_Y$.
By the Carath\'eodory Extension theorem, a maximal such product measure, 
called the \emph{maximal product measure} $\mu\boxtimes \nu$, always exists:
$$
(\mu\boxtimes\nu)(W):=\inf \left\{\sum_{i\in \naturals} \int_{U_i}\mu(\dif x) 
\nu(x,V_i) \mid W\subseteq \bigcup_{i\in\naturals} (U_i\times V_i),\;
U_i\in\Sigma_X,\; V_i\in\Sigma_Y \right\}.
$$
In general, however, there may be many product measures.
For instance, in some cases, it is possible to define product measures 
through iterated integration:
$$
(\mu\otimes^l\nu)(W):=\int_X\mu(\dif x)\int_Y\nu(x,\dif y)\;\chi_W(x,y)
$$
and (in the case where $\nu(x)$ is independent of $x$, i.e.\ $\nu$ is a measure on $Y$)
$$
(\mu\otimes^r\nu)(W):=\int_Y\nu(\dif y)\int_X\mu(\dif x)\;\chi_W(x,y).
$$
In general, $\mu\otimes^l\nu$ might not be well-defined, since 
$x\mapsto\int_Y\nu(x,\dif y)\chi_W(x,y)$ might not be a measurable function 
of $x$
\footnote{As before, take $X,Y=\reals$,
$\nu(x,-):=\infty\cdot\counting_\reals$,
$W\in \Sigma_{\reals\times\reals}$ such that $\fst(W)\notin \Sigma_\reals$
(whose existence is a classical result in descriptive set theory
\citep{kechris2012classical}, in suitable models of ZF).
Then, $x\mapsto\int_Y\nu(x,\dif y)\chi_W(x,y)=\infty\cdot \chi_{\fst(W)}$,
which is not measurable.
}, 
and similarly for $\mu\otimes^r \nu$.
Moreover, even when they are well-defined, they might not be equal:
$(\counting_{\reals}\otimes^l\lebesgue)(\{\langle r,r\rangle\mid r\in 
V\})=0\neq \lebesgue(V)=(\counting_\reals\otimes^r\lebesgue)(\{\langle 
r,r\rangle\mid r\in V\})$ for non-$\lebesgue$-null $V\in\Sigma_\reals$.
That is, \emph{Fubini's theorem} for swapping the order of integration does not 
hold in general.

We note that point mass measures allow us to interpret a measurable function 
$f:X\to Y$ as a kernel $\delta_f$ from $X$ to $Y$.
This lets us relate the pushforward and pullback of kernels to kernel 
integration in the sense that
$$
k\dcompose \delta_f =f_*k \textnormal{\qquad and \qquad} \delta_f\dcompose 
k=f^*k.
$$
Given a measurable partial function $f:X\rightharpoonup Y$, we can define a 
kernel $\delta_f$ from $X$ to $Y$ by setting $\delta_\bot:=0$ (the zero 
measure).
Then, using the two equations above, we can \emph{define} the pushforward and 
pullback of kernels $k$ along $f$. 

We say that a kernel $k$ from $X$ to $Y$ is \emph{supported in $C\in 
\Sigma_{X\times Y}$} if $k(x,Y\setminus C^x)=0$ for all $x\in X$, where we write
$C^x:=\{y\in Y\mid \langle x,
y\rangle\in C\}$ for $C\in\Sigma_{X\times Y}$.

For two kernels $k,l:X\leadsto Y$, let us write $\mutsing{k}{l}$ (read ``$k$ and $l$
are \emph{mutually singular\footnote{
We are using a notion of non-uniform mutual singularity of kernels here.
Sometimes, a more stringent notion is used instead: $k$ and $l$ are called
\emph{uniformly mutually singular} if there exists $A\in\Sigma_Y$ such that
for all $x\in X$, $k(x)(A)=l(x)(Y\setminus A)=0$.
}}'') when there exists $A\in\Sigma_{X\times Y}$ such that $k$ is supported in
$A$ and $l$ is supported in $X\times Y\setminus A$.
Note that any countable family $\{k_n\}_{n\in\naturals}$ is pairwise mutually
singular iff there is a measurable partition $\{A_n\}_{n\in\naturals}$ of
$X\times Y$ such that for all $n\in \naturals$, $k_n$ is supported in $A_n$.
 
For two kernels $k,l: X\leadsto Y$, let us write $k\ll l$ (read ``$k$ is
\emph{absolutely continuous} with respect to $l$'') if for all $x\in X$, for all
$A\in\Sigma_Y$, $l(x)(A)=0$ implies that $k(x)(A)=0$.

\section{Recap: Classes of Kernels}
As we have seen, general measures and kernels can be problematic in the sense
that basic results fail: $\kact{k}{ f}$ might not be measurable,
$\mu\otimes^l k$ might not be well-defined and $\mu\otimes^l \nu$ may not
be equal to $\mu\otimes^r \nu$, even if both sides are well-defined
(Fubini's theorem can fail).
Therefore, we shall now restrict our attention to certain better-behaved classes
of kernels.

In this paper, we shall be interested in the following classes of kernels and 
the corresponding classes of measures, which we identify with kernels with 
domain $1$, a (fixed) singleton set.

\begin{definition}[Classes of Kernels / Measures] 
\label{def:kernel}
{A \emph{kernel} from $(X, \Sigma_X)$ to $(Y, \Sigma_Y)$ is a function 
$k:X\times \Sigma_Y\to[0,\infty]$ such that $k(x, -) : \Sigma_Y \to 
[0,\infty]$ is a measure for all $x \in X$, and $k(-, U) : X \to [0,\infty]$ 
is measurable for all $U \in \Sigma_Y$.
Furthermore, we define the following classes of kernels (and measures, which we 
identify with kernels $\mu:1\times \Sigma_Y\to[0,\infty]$).}
\begin{itemize}

\item $k$ is called a \emph{probability kernel} if 
$k(x,Y)=1$ for all $x\in X$;

\item $k$ is called a \emph{subprobability kernel} if 
{$\sup_{x \in X} k(x, Y) \leq 1$};

\item $k$ is called a \emph{finite kernel} if {$\sup_{x \in X} k(x, Y) < 
\infty$};

\item $k$ is called a (non-uniformly\footnote{There is also a stronger notion of
uniformly $\sigma$-finite kernel $k$ sometimes used in practice, in which case
$k$ is required to decompose into a countable sum of finite kernels that are
\emph{uniformly mutually singular}.
Both reduce to the same usual notion of $\sigma$-finite measure if $X=1$.
In this paper, we shall not be concerned with uniformly $\sigma$-finite kernels.
})
\emph{$\sigma$-finite} if it is of the form $k=\sum_{i\in\naturals}k_i$ where each
$k_i$ is a finite kernel from $(X, \Sigma_X)$ to $(Y, \Sigma_Y)$ and $k_i\bot k_j$
whenever $i\neq j$.	

% \item {$k$ is called a \emph{(uniformly) $\sigma$-finite kernel} if it is of the
% form $k=\sum_{i\in\naturals}k_i$ for finite kernels $k_i$, such that there
% exists a countable (disjoint) partition $Y=\bigcup_{i\in\naturals} Y_i$ where
% each $k_i$ is supported on $Y_i$: $k_i(x,Y_i)=k_i(x,Y)$ for all $x\in X$;}

% {(equivalently, $k$ is a \emph{(uniformly) $\sigma$-finite kernel} if there
% exists a sequence of (pairwise disjoint) sets
% $(Y_i \in \Sigma_Y: i \in \naturals)$ 
% satisfying $\sup_{x \in X} k(x, Y_i) < \infty$ for all $i$, and $\bigcup_i 
% Y_i = Y$)}
\item $k$ is called an \emph{s-finite kernel} if it is of the form 
$k=\sum_{i\in\naturals}k_i$ {where each $k_i$ is a finite kernel from $(X, 
\Sigma_X)$ to $(Y, \Sigma_Y)$}.
\end{itemize}
{It follows from the definition that, ordered by inclusion, the above classes 
form an increasing chain.}
\end{definition}
We stress the uniformity of the bound in the definition of a finite (and 
hence $\sigma$-finite and $s$-finite kernel).

\begin{example}[Deterministic Kernels] Any measurable partial function 
$f:X\rightharpoonup Y$ defines a \emph{deterministic kernel} 
\[
(x, U) \mapsto 
\left\{
\begin{array}{ll}
\delta_{f(x)}(U) & \hbox{if $x \in \dom(f)$}\\
0 & \hbox{otherwise} 
\end{array}
\right.
\]
which is a subprobability kernel and, in fact, a probability kernel if $f$ is 
a function. 
\end{example}

% \begin{example}[Nondeterministic Kernels] Suppose that $k_i$ are disjoint 
% deterministic kernels (in the sense that $k_i(x)= k_j(x)\Rightarrow 
% k_i(x)=\bot$ for all $x\in X, i\neq j\in\naturals$), for $i\in \naturals$.
% Then, $k=\sum_{i\in \naturals} k_i$ is a \emph{(countably) non-deterministic 
% kernel}, a specific example of an s-finite kernel.
% In case for all $x$, there is some finite set $U$ such that $k(x)$ is 
% supported in $U$, we say that $k$ is \emph{finitely non-deterministic}.
% Note that there is a distinct difference between a finitely non-deterministic 
% kernel and a probabilistic kernel supported in finite sets: the latter is 
% normalised while the former is not.
% This distinction is at the root of many of the differences between 
% probability and non-determinism.
% \end{example}

\begin{example}[Non $\sigma$/s-Finite Kernels] An example of an s-finite 
measure that is not $\sigma$-finite is the infinite measure on a one-point space.
An example of a measure that is not s-finite is a counting measure on an
uncountable measurable space like $\reals$.
\end{example}

S-finite kernels have the following important properties.
Their importance lies mainly in the fact that they are a class of 
infinite kernels that is closed under composition (indeed, all the classes 
of kernels in Definition~\ref{def:kernel}, except $\sigma$-finite, are closed 
under composition) and for which most important results from measure theory 
hold.
\begin{theorem}[Composition of s-finite Kernels, 
\citep{staton2017commutative}] \label{thm:sfincomp}
The class of s-finite kernels is closed under
composition (kernel integration): $k \dcompose l$ is s-finite if $k$ and $l$ are.
In particular, it is closed under pushforwards along measurable 
(partial) functions.
\end{theorem}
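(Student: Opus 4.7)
The plan is to reduce everything to the elementary fact that the composition of two finite kernels is finite, and then use the Scott continuity of integration to commute countable sums past the integral.

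Concretely, suppose $k: X\leadsto Y$ and $l: Y\leadsto Z$ are s-finite, and fix decompositions $k=\sum_{i\in\naturals}k_i$ and $l=\sum_{j\in\naturals}l_j$ into finite kernels, with uniform bounds $k_i(x,Y)\le M_i$ and $l_j(y,Z)\le N_j$. The first step is to establish the identity
\[
(k\dcompose l)(x,V)\;=\;\sum_{i,j\in\naturals}(k_i\dcompose l_j)(x,V)
\]
for every $x\in X$ and $V\in\Sigma_Z$. This follows by unfolding the definition of composition, observing that $l(-,V)=\sum_j l_j(-,V)$ is a countable $[0,\infty]$-linear combination of measurable functions, and invoking the Scott continuity and $[0,\infty]$-linearity of integration against $k_i(x,-)$ recalled in the previous section; one then pulls the sum over $i$ outside using the same property on the measure side, i.e.\ that $k(x,-)=\sum_i k_i(x,-)$ is a countable sum of measures and integration is linear in the measure.

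The second step is to observe that each summand $k_i\dcompose l_j$ is actually a finite kernel. Indeed,
\[
(k_i\dcompose l_j)(x,Z)\;=\;\int_Y k_i(x,\dif y)\,l_j(y,Z)\;\le\; N_j\cdot k_i(x,Y)\;\le\; M_i N_j,
\]
uniformly in $x$, so $k_i\dcompose l_j$ satisfies the finite kernel condition. Reindexing the double sum over the countable set $\naturals\times\naturals$ via any bijection $\naturals\times\naturals\cong\naturals$ then exhibits $k\dcompose l$ as a countable sum of finite kernels, which is the definition of s-finite.

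For the pushforward claim, I would use the identity $f_*k=k\dcompose\delta_f$ already noted in the recap (extended to partial $f$ by setting $\delta_\bot=0$). Since $\delta_f$ is a subprobability kernel, it is in particular finite, hence trivially s-finite (as a one-term sum), so the closure under composition just established yields that $f_*k$ is s-finite whenever $k$ is.

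The only real subtlety, and the step I would be most careful with, is the justification of the exchange of the two countable sums with the integral in the first display; everything else is either a uniform estimate or bookkeeping. That exchange, however, is nothing more than an instance of the monotone convergence / Scott continuity property of the integral applied twice, so no genuine obstacle arises.
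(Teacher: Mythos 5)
Your proof is correct and is essentially the standard argument (the one in the cited work of Staton): decompose both kernels into countable sums of finite kernels, use monotone convergence twice to commute the sums past the integral, bound each pairwise composite uniformly by $M_iN_j$, and reindex over $\naturals\times\naturals$; the pushforward claim via $f_*k=k\dcompose\delta_f$ with $\delta_f$ a subprobability (hence finite) kernel is likewise the intended route. The paper itself only cites this result without proof, so there is nothing further to compare against.
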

In particular, it turns out that the more commonly used subclass of 
$\sigma$-finite measures, while generally well-behaved in the sense that, for 
instance, the Fubini and Radon-Nikod\'ym theorems hold, does not enjoy this 
property of compositionality, which explains our preference for s-finite 
kernels.

In fact, it is perhaps for that reason that many texts typically discuss only
$\sigma$-finite measures, rather than $\sigma$-finite kernels, while discussions
of kernels are frequently limited to the (sub)probability case.
Similarly, in this note, we shall focus on measures, rather than kernels,
in the $\sigma$-finite case.

Let us therefore make the definition of $\sigma$-finite measures more explicit: 
a measure $\mu$ on $Y$ is {$\sigma$-finite} if it is the
countable sum $\sum_{i\in \naturals}\mu_i$ of (pairwise) mutually singular
finite measures $\mu_i$.
More explicitly, a measure $\mu$ on $Y$ is
{$\sigma$-finite} if there exists a sequence of (pairwise disjoint) sets 
$(Y_i \in \Sigma_Y: i \in \naturals)$ satisfying $\mu(Y_i) < \infty$ for all 
$i$, and $\bigcup_i Y_i = Y$.

\section{Product Measures and Extension of Measure}
As we shall see, however, s-finite kernels satisfy a limited Fubini theorem
(proved by Staton) in the sense that $\mu\otimes^l\nu=\mu\otimes^r\nu$ if
both sides are defined.
We add the observation that $\mu\otimes^l\nu$ need not equal
$\mu\boxtimes\nu$ (hence the qualification ``limited'').

The classical proof of the Fubini theorem for $\sigma$-finite measures
relies on the uniqueness of the Carath\'eodory extension
\citep{schilling2017measures}.
We note that Carath\'eodory extensions for s-finite measures need not be
unique.

\begin{theorem}[Carath\'eodory Extension Theorem] Let $X$ be a measurable 
space and let $R\subseteq \Sigma_X$ be a sub-ring (or sub-semi-ring) of 
$\Sigma_X$ that generates $\Sigma_X$.
Let $\mu:R\to[0,\infty]$ be a linear Scott continuous map (pre-measure).
Then, there exists an extension $\overline{\mu}:\Sigma_X\to[0,\infty]$ of 
$\mu$.
If $\mu$ is $\sigma$-finite, then there is a unique extension $\overline{\mu}$.
If $\mu$ is s-finite, then there exists an s-finite extension
$\overline{\mu}$, but it may fail to be unique.
In fact, not every extension $\overline{\mu}$ need be s-finite.
\end{theorem}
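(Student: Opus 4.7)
My plan is to follow the classical Carath\'eodory outer-measure argument for existence, to use a Dynkin $\pi$-$\lambda$ argument for $\sigma$-finite uniqueness, to bootstrap to the s-finite extension clause via the defining countable decomposition, and finally to exhibit a concrete counterexample witnessing both the non-uniqueness and the existence of non-s-finite extensions.

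For existence, I would define the outer measure
$$
\mu^*(A) \;:=\; \inf\Bigl\{\sum_{i\in\naturals}\mu(R_i) \;\Bigm|\; A\subseteq\bigcup_{i\in\naturals} R_i,\; R_i\in R\Bigr\},
$$
verify that $\mu^*$ is an outer measure, that its Carath\'eodory-measurable sets form a $\sigma$-algebra $\mathcal{M}\supseteq R$ (using finite additivity of $\mu$), and that $\mu^*|_R = \mu$ (using Scott continuity along $\omega$-chains); since $R$ generates $\Sigma_X$ we have $\Sigma_X\subseteq\mathcal{M}$, so $\overline\mu := \mu^*|_{\Sigma_X}$ is the required extension (if $R$ is only a semi-ring, I first pass to the ring it generates via finite disjoint unions). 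For $\sigma$-finite uniqueness, pick a disjoint exhaustion $X = \bigsqcup_{n\in\naturals} X_n$ with $X_n\in R$ and $\mu(X_n)<\infty$; for any two extensions $\overline\mu,\overline\mu'$ the family $\{A\in\Sigma_X : \overline\mu(A\cap X_n) = \overline\mu'(A\cap X_n)\}$ is a $\lambda$-system containing the $\pi$-system $R$, hence equals $\Sigma_X$, and summing over $n$ gives $\overline\mu = \overline\mu'$. For the s-finite existence clause, write $\mu = \sum_{n\in\naturals}\mu_n$ on $R$ with each $\mu_n$ a finite pre-measure, invoke the already-proved finite case to extend each $\mu_n$ uniquely to a finite measure $\overline{\mu_n}$ on $\Sigma_X$, and set $\overline\mu := \sum_n \overline{\mu_n}$, which is s-finite by construction and extends $\mu$ by countable additivity.

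For non-uniqueness and the existence of non-s-finite extensions, I would exhibit the following counterexample. Take $X=\reals$ with its Borel $\sigma$-algebra, let $R$ be the ring of finite disjoint unions of bounded half-open intervals, and set $\mu(A) := \infty$ for nonempty $A\in R$ and $\mu(\emptyset) := 0$. One checks directly that $\mu$ is a Scott-continuous linear pre-measure, and it is s-finite on $R$ (as $\sum_{n\in\naturals}\delta_{q_n}|_R$ for an enumeration $\{q_n\}_{n\in\naturals}$ of $\rationals$). Then $\infty\cdot\lebesgue$ and $\counting_\rationals$ are two s-finite Borel extensions of $\mu$ that disagree on singletons of rationals, while $\counting_\reals$ is a further extension that is not s-finite (by the example mentioned earlier in this paper). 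The main obstacle is precisely ensuring that the pre-measure in the counterexample is itself genuinely s-finite on $R$, so that the non-uniqueness clause has non-vacuous content; the remaining verifications are routine once one has the finite-case existence-and-uniqueness result and pushes countable sums through the extension procedure.
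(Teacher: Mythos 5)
Your proof follows essentially the same route as the paper: the first two clauses are the standard Carath\'eodory results (which the paper simply cites), the s-finite existence clause is obtained identically by decomposing $\mu$ into finite pre-measures, extending each, and summing, and your counterexample is the paper's own (the pre-measure on half-open intervals that is $\infty$ on every nonempty set), differing only in that you take $\counting_\rationals$ rather than $\infty\cdot\lebesgue+\delta_0$ as the second s-finite extension, with $\counting_\reals$ as the non-s-finite extension in both cases. The argument is correct, and your explicit check that this pre-measure is s-finite on $R$ (via $\sum_n\delta_{q_n}$) is a worthwhile detail the paper leaves implicit.
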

\begin{proof}
The existence statement and uniqueness for $\sigma$-finite measures are
standard \linebreak \citep[Theorem 6.1]{schilling2017measures}.
The existence of an s-finite extension $\overline{\mu}$ for an s-finite
premeasure $\mu$ follows by noting that $\mu=\sum_{i\in\naturals}\mu_i$,
where $\mu_i$ is finite.
Now, we know that $\mu_i$ has a $\sigma$-finite extension $\overline{\mu_i}$
to all of $\Sigma_X$ and it follows that $\overline{\mu}:=\sum_{i\in\naturals}
\overline{\mu_i}$ is an extension of $\mu$, which is s-finite as a countable
sum of $\sigma$-finite measures (and therefore a countable sum of finite
measures).
The non-uniqueness for s-finite measures follows from the following
counterexample.
Note that the half-open intervals $[a,b)$ form a semi-ring $R$ that generates
the Borel $\sigma$-algebra on $\reals$.
Now, note that $\overline{\mu}=\sum_{n\in\naturals } \uniform{\reals}$ and
$\overline{\mu}+\delta_0$
are two distinct s-finite measures on $\reals$ that restrict to the same
premeasure $\mu$ on $R$.
Moreover, $\counting_{\reals}$ also restricts to $\mu$ on $R$ and it is not
s-finite.
\end{proof}

In fact, we can even show that product measures for s-finite measures are not
unique.
\begin{theorem}[Non-Uniqueness of Product Measures] Product measures for
s-finite measures need not be unique.
In particular, we may have $\mu\otimes^l\nu\neq\mu\boxtimes \nu$ for
s-finite measures $\mu$ and $\nu$.
\end{theorem}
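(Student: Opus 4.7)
The plan is to produce a concrete counterexample using the s-finite but non-$\sigma$-finite measure from the proof of the previous theorem. Take $\mu = \nu := \sum_{n\in\naturals} \lebesgue$ on $\reals$, which is s-finite by construction but satisfies $\mu(A) = \infty \cdot \lebesgue(A) \in \{0,\infty\}$ for every Borel $A$. I will then evaluate both candidate product measures on the Borel diagonal $\Delta := \{\langle x,x\rangle \mid x\in\reals\}$ (which is measurable as the preimage of $0$ under $\langle x,y\rangle \mapsto x - y$).

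For the iterated integral side, since Lebesgue measure is atomless, $\nu(\{x\}) = \infty \cdot 0 = 0$ for every $x$, so Tonelli-style slicing (which is just a direct computation here, not appealing to Fubini) gives
$$(\mu\otimes^l\nu)(\Delta) = \int_\reals \mu(\dif x)\,\nu(\Delta^x) = \int_\reals \mu(\dif x)\,\nu(\{x\}) = 0.$$

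For the maximal product I need to show $(\mu\boxtimes\nu)(\Delta) = \infty$. I argue by contradiction. Suppose $\Delta \subseteq \bigcup_{i\in\naturals}(U_i\times V_i)$ with $\sum_i \mu(U_i)\nu(V_i) < \infty$. Since each $\mu(U_i)\nu(V_i)$ lies in $\{0,\infty\}$, every summand must equal $0$, so for each $i$ either $\lebesgue(U_i)=0$ or $\lebesgue(V_i)=0$. Partition $\naturals$ into $I_1 := \{i : \lebesgue(U_i)=0\}$ and $I_2 := \{i : i\notin I_1,\ \lebesgue(V_i)=0\}$. From $\Delta \subseteq \bigcup_i U_i\times V_i$ we get $\reals = \bigcup_i (U_i\cap V_i)$. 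But $\bigcup_{i\in I_1}(U_i\cap V_i) \subseteq \bigcup_{i\in I_1} U_i$ is Lebesgue-null as a countable union of null sets, and likewise $\bigcup_{i\in I_2}(U_i\cap V_i) \subseteq \bigcup_{i\in I_2} V_i$ is null; this would express $\reals$ as a countable union of Lebesgue-null sets, a contradiction. Hence every admissible cover forces some summand to be $\infty$, and $(\mu\boxtimes\nu)(\Delta) = \infty \neq 0 = (\mu\otimes^l\nu)(\Delta)$.

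The only non-routine step is this projection/pigeonhole argument establishing that the outer-measure infimum on $\Delta$ is $\infty$; the iterated-integral computation and the s-finiteness of $\mu$ are immediate from the construction and from the stated convention $\infty\cdot 0 = 0$. I expect the argument to generalize readily to show that any $\mu\otimes^l\nu$ which vanishes on a ``large'' set (like the diagonal) where $\mu\boxtimes\nu$ is forced to be infinite will give non-uniqueness, but a single example suffices for the statement as written.
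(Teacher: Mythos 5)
Your proof is correct and takes essentially the same route as the paper: the same measure $\mu=\nu=\infty\cdot\uniform{\reals}$ (your $\sum_n\lebesgue$), the same test set (the diagonal), the same computation $(\mu\otimes^l\nu)(\Delta)=0$ versus $(\mu\boxtimes\nu)(\Delta)=\infty$. Your covering/pigeonhole argument actually supplies a justification the paper elides (it simply asserts $\inf\{\infty\}=\infty$), namely that every admissible rectangle cover of $\Delta$ forces some summand to be infinite because $\reals$ is not a countable union of Lebesgue-null sets.
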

\begin{proof}
Let $\mu=\nu=\infty\cdot\uniform{\reals}$.
Then note that $\mu\otimes^l\nu(\{\langle r,r\rangle\mid r\in\reals\})=
\int_{\reals\times\reals}\mu\otimes^l\nu(\dif z)
[\fst(z)=\snd(z)]=\infty\cdot \int_\reals \uniform{\reals}(\dif x)\int_\reals
\uniform{\reals}(\dif y)[x=y]=\infty\cdot 0=0$.
Meanwhile, for the maximal product measure
$\mu \boxtimes \nu(\{\langle r,r\rangle\mid r\in\reals\})=
\inf\{\sum_{i\in\naturals}\mu(A_i)\nu(B_i)\mid
\{\langle r,r\rangle\mid r\in\reals\}\subseteq
\bigcup_{i\in\naturals}A_i\times B_i,\; A_i,B_i\in\Sigma_\reals\}
=\infty$.
Indeed, in any such cover we have
$\reals=\bigcup_i(A_i\cap B_i)$; hence some $A_i\cap B_i$ has positive
Lebesgue measure, and for this $i$ both $\mu(A_i)$ and $\nu(B_i)$ are infinite.
\end{proof}
This shows that the classical integral recipe
$$
\int_{X\times Y} = \int_X \int_Y
$$
can fail for s-finite measures.
(Often, one regards the maximal product measure $\mu\boxtimes \nu$ as the
canonical choice of product measure.)

However, we do have the following limited Fubini theorem for
s-finite kernels.
Let us define parameterised versions of $\otimes^l$ and $\otimes^r$:
for $k:X\leadsto Y$ and $l:X\times Y\leadsto Z$ write
$$
(k\otimes^l l)(x,W) = \int_Y k(x,\dif y) \int_Z l(x,y,\dif z) \chi_W(y,z)
$$
and for
$k:X\leadsto Y$ and $l:X\leadsto Z$, write
$$
(k\otimes^r l)(x,W) = \int_Z l(x,\dif z) \int_Y k(x,\dif y) \chi_W(y,z).
$$
Then, we have the following.

\begin{theorem}[Limited Fubini \citep{staton2017commutative}] 
\label{thm:fubini} If $k:X\leadsto Y$ and $l:X\times Y\leadsto Z$ are s-finite,
then $k\otimes^l l$ defines an s-finite kernel $X\leadsto Y\times Z$. 
Furthermore, if $l(x,y,-)$ does not depend on $y\in Y$ (so that we
may regard $l$ as a kernel $X\leadsto Z$), then $k\otimes^r l$ also
defines an s-finite kernel $X\leadsto Y\times Z$ and
$$k\otimes^l l= k \otimes^r l.$$
\end{theorem}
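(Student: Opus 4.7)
The plan is to reduce the statement to the case of finite kernels by exploiting the countable decompositions $k=\sum_{i\in\naturals} k_i$ and $l=\sum_{j\in\naturals} l_j$ into finite kernels afforded by the s-finite hypothesis. Using the countable linearity and Scott continuity of integration, I would pull both sums outside the iterated integrals, obtaining
$$
(k\otimes^l l)(x,W) \;=\; \sum_{i,j\in\naturals} (k_i\otimes^l l_j)(x,W).
$$
It then suffices to show that, for finite kernels $k_i$ and $l_j$, the expression $k_i\otimes^l l_j$ is itself a finite kernel: then $k\otimes^l l$ is a countable sum of finite kernels and so s-finite by definition.

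For the finite case I would verify two things. First, boundedness: if $k_i(x,Y)\leq M$ and $l_j(x,y,Z)\leq N$ uniformly in $x,y$, then $(k_i\otimes^l l_j)(x,Y\times Z)\leq MN$, which immediately gives the required uniform bound and also that the inner integral is always finite. Second, measurability of $x\mapsto (k_i\otimes^l l_j)(x,W)$ for each $W\in\Sigma_{Y\times Z}$; this is the real content and is where the $\sigma$-algebra-pathological counterexamples in the paper's earlier footnotes would otherwise bite. The cleanest approach is a monotone-class (or $\pi$--$\lambda$) argument: show that the collection of $W\in\Sigma_{Y\times Z}$ for which $x\mapsto (k_i\otimes^l l_j)(x,W)$ is measurable forms a $\sigma$-algebra containing the measurable rectangles $A\times B$, on which the value reduces to $\int_A k_i(x,\dif y)\,l_j(x,y,B)$, a routine measurable composite by the kernel properties of $k_i$ and $l_j$ together with joint measurability of $(x,y)\mapsto l_j(x,y,B)$. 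The boundedness established above lets me invoke dominated, rather than merely monotone, convergence to close the class under complements.

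For the second assertion, assume $l$ does not depend on $x$, so each $l_j$ is a finite measure on $Z$. The same decomposition argument applied to $k\otimes^r l$ yields $k\otimes^r l=\sum_{i,j} k_i\otimes^r l_j$, each summand again a finite kernel for essentially the same reasons. Finiteness of the total mass reduces each pair $(k_i,l_j)$ to the setting of classical Fubini for finite (a fortiori $\sigma$-finite) measures, so $k_i\otimes^l l_j = k_i\otimes^r l_j$ for every $i,j$, and summing over $i,j$ gives the desired global equality $k\otimes^l l=k\otimes^r l$.

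The main obstacle is the measurability-in-$x$ statement for finite kernels. The summation manipulations and the finite-mass bound are routine, but joint measurability of the iterated integral is exactly the issue flagged by the paper's earlier counterexamples for general kernels, and the finiteness hypothesis must be used essentially, both to make sense of the inner integrals pointwise and to run the monotone-class closure under complementation with dominated convergence.
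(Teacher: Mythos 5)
Your proposal is correct, and there is nothing in the paper to contrast it with: the paper states this theorem as a citation of Staton's work and gives no proof of its own. Your argument --- decompose into finite kernels, pull the double sum out by monotone convergence, prove measurability in $x$ for finite kernels via a $\pi$--$\lambda$ (monotone class) argument starting from rectangles (where finiteness is what permits closure under complements), and then invoke classical Fubini for finite measures pointwise in $x$ --- is exactly the standard route and correctly isolates the measurability step as the only non-routine point.
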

This shows that the classical integral recipe
$$
\int_X \int_Y=\int_Y\int_X
$$
is valid for s-finite measures.

\section{Recap: Standard Borel Spaces}
General measure spaces are too wild for many desirable results to hold, but 
usually we are only interested in a very well-behaved subclass of them, the 
\emph{standard Borel spaces}, which has several useful characterisations as 
follows (see e.g.~Kuratowski's Classification Theorem \citep[Section 
15B]{kechris2012classical}). 

\begin{definition}[Standard Borel Space]
We call a measurable space $X$ a \emph{standard Borel space} if its underlying
set admits a complete metric with a dense countable subset whose Borel
$\sigma$-algebra is $\Sigma_X$.
\end{definition}

\begin{proposition}[Kuratowski's Classification Theorem]
Given a measurable space $(X, \Sigma_X)$ the following are equivalent:
\begin{enumerate}
\item $X$ is a standard Borel space.
\item $(X, \Sigma_X)$ is either measurably isomorphic to $(\reals, 
\Sigma_\reals)$ with the Borel $\sigma$-algebra $\Sigma_\reals$, or countable 
with the discrete $\sigma$-algebra.
\item $(X, \Sigma_X)$ is a measurable retract of $(\reals, \Sigma_\reals)$, 
i.e., there exist measurable
$X \xrightarrow{f} \reals \xrightarrow{g} X$ such that $g \circ f = 
\mathrm{id}_X$.
\end{enumerate}
\end{proposition}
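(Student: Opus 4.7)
The plan is to establish the three conditions equivalent by proving the cycle $(2) \Rightarrow (3) \Rightarrow (1) \Rightarrow (2)$. For $(2) \Rightarrow (3)$: if $X$ is measurably isomorphic to $\reals$ via $\varphi$, take $f := \varphi$ and $g := \varphi^{-1}$. If $X$ is countable and discrete, fix an injection $f: X \to \reals$ (for instance, composing an enumeration $X \hookrightarrow \naturals$ with the inclusion into $\reals$), pick a basepoint $x_0 \in X$, and define $g: \reals \to X$ sending each $f(x)$ back to $x$ and every other real to $x_0$. Measurability of $f$ is automatic as $X$ is discrete, and $g$ is measurable because for every $x \in X$ the preimage $g^{-1}(\{x\})$ is either a singleton in $\reals$ or the union of the cocountable set $\reals \setminus f(X)$ with one more point, both of which are Borel.

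For $(3) \Rightarrow (1)$: since $g \circ f = \id_X$, the map $f$ is injective, and
$$
f(X) = \{r \in \reals \mid f(g(r)) = r\}
$$
is the equalizer of the measurable maps $f \circ g$ and $\id_\reals$ in $\Meas$, hence a Borel subset of $\reals$. The map $f$ therefore restricts to a measurable isomorphism $X \to f(X)$ with inverse $g|_{f(X)}$, exhibiting $X$ as (measurably isomorphic to) a Borel subspace of $\reals$. By the classical Kuratowski change-of-topology theorem, every Borel subset of a Polish space carries a refined Polish topology that generates the same Borel $\sigma$-algebra, so $X$ is standard Borel.

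For $(1) \Rightarrow (2)$: if $|X|$ is countable, every singleton is closed in the complete metric on $X$, hence Borel, so the discrete $\sigma$-algebra is contained in $\Sigma_X$ (the reverse inclusion is trivial). If $|X|$ is uncountable, the aim is a Borel isomorphism $X \cong \reals$, which I would produce via Borel Schr\"oder--Bernstein. In one direction, choose a countable family of continuous $[0,1]$-valued functions separating points of $X$ and bundle them into a continuous injection $X \hookrightarrow [0,1]^\naturals$; its image is Borel (by Lusin--Souslin: the inverse is automatically Borel), and $[0,1]^\naturals$ is itself Borel-isomorphic to $\reals$. In the other direction, a Cantor--Bendixson / perfect-set argument embeds the Cantor space $2^\naturals$ topologically into $X$, and $2^\naturals$ is Borel-isomorphic to $\reals$. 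Feeding the two Borel injections into the Borel Schr\"oder--Bernstein theorem yields the desired isomorphism.

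The main obstacle is the uncountable case of $(1) \Rightarrow (2)$, where the essential descriptive-set-theoretic input is the Lusin--Souslin theorem that inverses of Borel injections between standard Borel spaces are Borel. This is what promotes the continuous injection $X \hookrightarrow [0,1]^\naturals$ to a Borel isomorphism onto its (Borel) image, and what makes the Schr\"oder--Bernstein assembly of piecewise-defined partial inverses genuinely yield a Borel isomorphism rather than just a set-theoretic bijection.
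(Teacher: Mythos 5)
The paper does not actually prove this proposition: it is stated as a recap and delegated to \citep[Section 15B]{kechris2012classical}, so there is no in-house argument to compare yours against. Your cycle $(2)\Rightarrow(3)\Rightarrow(1)\Rightarrow(2)$ is the standard textbook route and is essentially correct. In $(3)\Rightarrow(1)$, identifying $f(X)$ as the equalizer $\{r\mid f(g(r))=r\}$ of $f\circ g$ and $\id[]$ is exactly the right way to see the image is Borel (the diagonal of $\reals$ is measurable in the product $\sigma$-algebra because $\reals$ is second countable), and the change-of-topology theorem then upgrades ``Borel subset of a Polish space'' to ``standard Borel.'' In $(1)\Rightarrow(2)$ you correctly isolate where the real work lives --- the Borel isomorphism theorem for uncountable Polish spaces --- and your outline (embed into $[0,1]^\naturals$, get a Borel copy, embed Cantor space the other way, apply Borel Schr\"oder--Bernstein) is the standard proof; you are right that Lusin--Souslin is the essential nontrivial input, although for the Hilbert-cube step specifically you could avoid it, since the image of a topological embedding of a Polish space is automatically $G_\delta$. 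Two minor caveats: the countable-discrete case of $(2)\Rightarrow(3)$ requires $X\neq\emptyset$ for the retraction $g$ to exist (the empty space satisfies (1) and (2) but not (3); this is a defect of the proposition as stated rather than of your argument), and it would be worth one sentence confirming that transporting a Polish metric across the measurable isomorphism in $(1)\Rightarrow(2)$ and $(3)\Rightarrow(1)$ really does produce the required metric generating $\Sigma_X$.
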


Note that all singletons are measurable in a standard Borel space.
We write $\Sbs$ for the full subcategory of $\Meas$ on the standard Borel 
spaces.

We record some non-trivial results for standard Borel spaces here. 
The subcategory $\Sbs\subseteq \Meas$ is closed under countable products, 
countable (distributive) coproducts and measurable subspaces (in particular 
equalizers) \citep[Section 12B]{kechris2012classical}. 
A function $f:X\to Y$ between standard Borel spaces is measurable iff its 
graph is measurable \citep[Section 14C]{kechris2012classical}. 
If $f:X\to Y$ is a measurable injection between standard Borel spaces, then 
$f(U)$ is measurable for any measurable $U\in\Sigma_X$, i.e.\ a measurable 
injection between standard Borel spaces is an embedding \citep[Section 
15A]{kechris2012classical}. 
In particular, a measurable bijection between standard Borel spaces is an 
isomorphism.

\section{Characterising S-finite Kernels}
A downside of s-finite kernels is that they have hardly been studied by 
probabilists, so even basic results still need to be established for them.
For this purpose, we give a few characterisations of s-finite 
measures and kernels.

\begin{theorem}[Characterising s-finite Kernels] \label{thm:charac-sfinite}
We have the following equivalent characterisations of the s-finiteness of a 
kernel $\nu$ from $X$ to $Y$:
\begin{enumerate}
\item $\nu = \sum_{n\in \naturals} \nu_n$ for subprobability kernels $\nu_n$;
\item $\nu$ is the pushforward of a $\sigma$-finite kernel.\\

Moreover:
\item for any measurable $f:X\times Y\to [0,\infty]$, $\kact{\nu}{ f}$ 
is an s-finite kernel if $\nu:X\times\Sigma_Y\to[0,\infty]$ is; in 
particular, $\kact{\nu}{ f}$ is s-finite if $\nu$ is a subprobability 
kernel;
\item (a weak converse:) if $\nu:X\times\Sigma_Y\to[0,\infty]$ is an
s-finite kernel, then there
exists a subprobability kernel $\mu:X\times \Sigma_Y\to[0,1]$ and a function
$f: X\times Y\to [0,\infty]$ (with $f(x,-)$ measurable for all $x\in X$)
such that $\nu = \kact{\mu }{ f}$ and $\mu(x)([f(x,-)=0])=0$ for all
$x\in X$.
(So we may choose $f>0$.)
If either $X$ is countable and discrete or $Y$ is standard Borel,
$f$ can be taken to be jointly measurable in $X$ and $Y$.
\end{enumerate}
\end{theorem}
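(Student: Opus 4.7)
The plan is to establish (1) and (2) by direct manipulation of the s-finite decomposition (with Theorem~\ref{thm:sfincomp} for the converse of (2)), and then to bootstrap (1) into proofs of (3) and (4); the main difficulty will lie in the joint measurability clause of (4).

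For (1), given $\nu = \sum_n \nu_n$ with each $\nu_n$ finite of uniform bound $c_n := \sup_x \nu_n(x, Y) < \infty$, I split each $\nu_n$ into $\lceil c_n \rceil$ copies of the subprobability kernel $\nu_n / \lceil c_n \rceil$; the converse is immediate because subprobability kernels are finite. For (2), given $\nu = \sum_n \nu_n$ with $\nu_n$ finite, I define a kernel $\tilde{\nu} : X \leadsto \naturals \times Y$ by $\tilde{\nu}(x, A) := \sum_n \nu_n(x, \{y \mid (n,y) \in A\})$; this is $\sigma$-finite via the disjoint partition $\{\{n\} \times Y\}_n$ on whose pieces $\tilde{\nu}$ restricts to the finite kernel $\nu_n$, and $\nu$ is its pushforward along the projection to $Y$. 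Conversely, any pushforward of a $\sigma$-finite (hence s-finite) kernel is s-finite by Theorem~\ref{thm:sfincomp}.

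For (3), I use (1) to write $\nu = \sum_n \nu_n$ with $\nu_n$ subprobability, and approximate $f$ as a countable $[0,\infty]$-linear combination $f = \sum_m c_m \chi_{B_m}$ of characteristic functions of $B_m \in \Sigma_{X \times Y}$ via the simple function approximation recalled at the start. Monotone convergence yields
$$
\kact{\nu}{f}(x, V) = \sum_{n, m} c_m \int_V \nu_n(x, \dif y)\, \chi_{B_m}(x, y),
$$
and a monotone class argument over $B_m$ (with base case $B_m = U \times V_0$, where the inner integral simplifies to $\chi_U(x)\, \nu_n(x, V \cap V_0)$) shows each $\int_V \nu_n(x, \dif y)\, \chi_{B_m}(x, y)$ is a subprobability kernel in $(x, V)$. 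For $c_m < \infty$ the summand is then a finite kernel; for $c_m = \infty$, I rewrite it as a countable sum of copies of that subprobability kernel. Hence $\kact{\nu}{f}$ is a countable sum of finite kernels.

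For (4), take $\nu = \sum_n \mu_n$ with $\mu_n$ subprobability and set $\mu := \sum_n 2^{-n-1} \mu_n$, which is subprobability. Since $\mu_n(x, \cdot) \leq 2^{n+1} \mu(x, \cdot)$, the classical Radon-Nikod\'ym theorem (both measures finite) supplies a density $g_n(x, \cdot)$ with $\mu_n(x, V) = \int_V \mu(x, \dif y)\, g_n(x, y)$, and $f(x, y) := \sum_n g_n(x, y)$ satisfies $\nu = \kact{\mu}{f}$ by monotone convergence. The set $\{f(x, \cdot) = 0\} \subseteq \bigcap_n \{g_n(x, \cdot) = 0\}$ is $\mu_n(x, \cdot)$-null for every $n$, hence $\mu(x, \cdot)$-null; redefining $f$ to be $1$ on this null set preserves the identity and makes $f$ strictly positive.

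The main obstacle is the joint measurability of $f$. For countable discrete $X$ it is automatic, since any $h$ with $h(x, \cdot)$ measurable for every $x$ is jointly measurable. For standard Borel $Y$, I will obtain a jointly measurable version of each $g_n$ by a martingale construction: fixing a countable generating algebra of $\Sigma_Y$ and an increasing sequence of finite sub-algebras $\mathcal{F}_N \uparrow \Sigma_Y$, the step function $g_n^{(N)}(x, y)$ whose value on the unique cell $A \in \mathcal{F}_N$ containing $y$ is $\mu_n(x, A)/\mu(x, A)$ (with $0/0 := 0$) is manifestly jointly measurable; $(g_n^{(N)})_N$ is a martingale with respect to $(\mathcal{F}_N)$ under $\mu(x, \cdot)$ and converges $\mu(x, \cdot)$-a.e.\ to the Radon-Nikod\'ym derivative, so $\limsup_N g_n^{(N)}$ is a jointly measurable representative of $g_n$, and summing over $n$ gives the desired jointly measurable $f$.
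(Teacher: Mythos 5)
Your proof is correct and follows essentially the same route as the paper's: the same rescaling for (1), the same $\naturals\times Y$ construction for (2), the same simple-function/double-sum decomposition for (3), and the same $\mu:=\sum_n 2^{-n-1}\nu_n$ with pointwise Radon--Nikod\'ym derivatives for (4). The only difference is that where the paper cites black boxes---the measurability of $\kact{\nu_m}{f_n}$ for finite kernels and bounded $f_n$, and the jointly measurable kernel Radon--Nikod\'ym theorem---you prove them inline via a monotone class argument and a Doob martingale construction over finite generating sub-algebras, both of which are the standard proofs of those cited results and are sound.
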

\begin{proof}
For 1), first decompose $\nu$ as the sum $\sum_{n\in\naturals}\nu_n$ of 
finite kernels.
Write $I_n$ for the smallest integer larger than $\sup_{x\in X}\nu_n(x,Y)$.
Then, note that $\nu = \sum_{n\in\naturals}\sum_{1\leq i\leq I_n} \nu_n/I_n$, 
where $\nu_n/I_n$ is a subprobability kernel and $\sum_{n\in\naturals}\{1\leq 
i\leq I_n\}\cong \naturals$.
Conversely, characterisation 1) is clearly a special case of our definition.

2) is a generalisation of Proposition 7 of \citep{staton2017commutative} from 
s-finite measures to s-finite kernels.
The proof is virtually the same.
It is clear that the pushforward $\nu$ of a $\sigma$-finite kernel $\mu$ is 
s-finite, since $\sigma$-finite kernels are, in particular, s-finite 
and s-finite kernels are closed under composition.
Conversely, given an s-finite kernel $\nu$ from $X$ to $Y$, we define a 
$\sigma$-finite kernel $\mu$ from $X$ to $\naturals\times Y$.
Indeed, decompose $\nu$ as a sum of finite kernels $\sum_{i\in\naturals}\nu_i$.
Then, define $\mu(x,V):=\sum_{i\in \naturals}\nu_i(x, \snd(V\cap (\{i\}\times 
Y)))$.
Note that $\mu$ is $\sigma$-finite and that $\nu=\snd_*\mu$.
(In fact, $\mu$ can even be observed to be a uniformly $\sigma$-finite kernel.)

For 3), note that we can approximate $f$ as a countable sum of bounded 
functions $\{f_n\}_{n\in \naturals}$ (the usual approximation by simple 
functions).
Let us decompose $\nu$ as a sum $\sum_{m\in\naturals}\nu_m$ of finite 
kernels. Then, $\kact{\nu}{ f}= \kact{(\sum_{m\in\naturals} 
\nu_m)}{(\sum_{n\in\naturals} f_n)}=\sum_{\langle n,m\rangle \in
\naturals\times\naturals} \kact{ \nu_m}{ f_n}$, which is a countable sum 
of finite kernels.
Indeed, note that $\kact{\nu_m}{ f_n}(x,Y)\leq (\sup_{(x,y)\in X\times Y} 
f_n(x,y))\cdot (\sup_{x\in X} \nu_m(x,Y))<\infty$.
(It is a classical result in measure theory that
$\kact{\nu_m}{f_n}$ defines a finite kernel (in particular, is measurable)
for $\nu_m$ a finite kernel and $f_n$ a bounded measurable function
\citep[Theorem 4.20 (ii)]{pollard2002user}).
For 4), decompose $\nu$ as the sum $\sum_{n\in\naturals}\nu_n$ of
subprobability kernels. 
Then define $\mu := \sum_{n\in\naturals} \nu_n/2^{n+1}$. 
It then follows that $\mu$ is a subprobability kernel. 
Note that, for all $x\in X$, $\nu_n(x)\ll\nu(x)\ll \mu(x)$ and define
$f_n(x,-) := \dif \nu_n(x)/\dif \mu(x)$ and $f:= 
\sum_{n\in\naturals}f_n$ (using the Radon-Nikod\'ym theorem for finite 
measures \citep{kallenberg2006foundations}). 
It then follows that
\begin{align*}
    \kact{\mu }{ f}(x,A)
&= \int_A \mu(x,\dif y)\; f(x,y)  \\
&= \int_A \mu(x,\dif y)\;\sum_{n\in\naturals}f_n(x,y) \\
&=\sum_{n\in\naturals}\int_A \mu(x,\dif y)\; f_n(x,y) \\
&=\sum_{n\in\naturals}\int_A \mu(x,\dif y)\;\dif\nu_n(x)/\dif\mu(x)(y) \\
&=\sum_{n\in\naturals}\nu_n(x,A)
=\nu(x,A),
\end{align*} where we use the monotone convergence theorem
\citep{schilling2017measures} 
to pull the countable sum out of the integral.
Note that
\begin{align*}
\nu(x,[f=0]) &= \int_{[f=0]} \; \nu(x,\dif y)\\
&= \int_{[f=0]} \mu(x,\dif y)\; f(x,y) \\
&= \int_{[f=0]} \mu(x,\dif y)\;0\\
&= 0.
\end{align*}
Now, as $\mu(x)\ll\nu(x)$, it follows that also $\mu(x,[f=0])=0$.
Finally, we note that if $X$ is countable and discrete or
$Y$ is standard Borel,
we can apply the Radon-Nikod\'ym theorem for
finite kernels (\citep[Theorem 1.28]{kallenberg2017random}), which lets us
construct $f_n$ (hence $f$) uniformly for all $x\in X$ as the kernel
RN-derivative $\dif \nu_n/\dif\mu$ and hence $f$ is jointly measurable
in this case.
\end{proof}
In particular, we see that, for $X$ countable and
discrete or $Y$ standard Borel,
a kernel $\nu$ from $X$ to $Y$ is s-finite if and only if it is of the shape
$\kact{\mu}{ f}$ for $\mu$ a subprobability kernel and
$f:X\times Y\to[0,\infty]$ measurable.

Compare points 3 and 4 with the following characterisation of $\sigma$-finite
kernels.
\begin{theorem}[Characterising $\sigma$-finite Kernels] 
\label{thm:characsigmaf}
Let $\nu$ be a kernel from $X$ to $Y$.
Then,
\begin{enumerate}
\item for any measurable function $f:X\times Y\to [0,\infty)$,
$\kact{\nu}{ f}$ is a $\sigma$-finite kernel if $\nu$ is; in particular,
$\kact{\nu}{ f}$ is $\sigma$-finite if $\nu$ is a subprobability kernel; 
\item conversely, if $\nu:X\times\Sigma_Y\to[0,\infty]$ is a
$\sigma$-finite kernel, then there
exists a subprobability kernel $\mu:X\times \Sigma_Y\to[0,1]$ and a measurable
function $f: X\times Y\to (0,\infty)$ such that $\nu=\kact{\mu}{ f}$;
\item $\nu$ is $\sigma$-finite iff there exists a measurable
function $g:X\times Y\to (0,\infty)$ such that
$\kact{\nu}{ g}$ is a subprobability (equivalently, finite) kernel.
\end{enumerate}
\end{theorem}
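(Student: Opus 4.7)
The plan is to prove parts 1 and 2 by direct decomposition arguments that closely parallel the s-finite proofs in Theorem~\ref{thm:charac-sfinite}(3) and (4), with additional care to preserve the \emph{mutual singularity} of the summands (which is what distinguishes $\sigma$-finite from s-finite); part 3 will then drop out of parts 1 and 2 together with associativity of the action $\kact{k}{f}$ in $f$.

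For part 1, I would write $\nu = \sum_{i\in\naturals} \nu_i$ as a sum of pairwise mutually singular finite kernels, and use the standard equivalence between countable mutual singularity and the existence of a measurable partition $\{B_i\}_{i\in\naturals}$ of $X\times Y$ with each $\nu_i$ supported in $B_i$. Because $f$ avoids $\infty$, the refinement $B_{i,n} := B_i\cap f^{-1}([n,n+1))$ (for $i,n\in\naturals$) is still a measurable partition of $X\times Y$, and the kernels
$\nu_{i,n}(x,V):=\int_V\nu_i(x,\dif y)\,f(x,y)\chi_{B_{i,n}}(x,y)$
are finite with $\sup_x\nu_{i,n}(x,Y)\leq (n+1)\sup_x\nu_i(x,Y)<\infty$, pairwise mutually singular (each supported in $B_{i,n}$), and sum to $\kact{\nu}{f}$, so $\kact{\nu}{f}$ is $\sigma$-finite.

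For part 2, starting again from a decomposition $\nu = \sum_n \nu_n$ with $\nu_n$ supported on a measurable partition $\{B_n\}$, I would rescale so that each $\nu_n$ is a subprobability kernel, set $\mu := \sum_n 2^{-(n+1)}\nu_n$ (still a subprobability kernel) and $f(x,y) := \sum_n 2^{n+1}\chi_{B_n}(x,y)$. The function $f$ is jointly measurable and strictly positive precisely because $\{B_n\}$ partitions $X\times Y$, and a direct computation using mutual singularity gives $\kact{\mu}{f}=\sum_n\nu_n=\nu$. Part~3 then combines these: for the forward direction, take $g:=1/f$ from part~2, yielding $\kact{\nu}{g}=\kact{\mu}{f\cdot(1/f)}=\mu$, a subprobability kernel. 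Conversely, if $g>0$ is measurable with $\kact{\nu}{g}$ a subprobability (hence trivially $\sigma$-finite) kernel, then $1/g:X\times Y\to[0,\infty)$ is measurable and finite-valued, so part~1 applies to give $\sigma$-finiteness of $\kact{\kact{\nu}{g}}{1/g}=\kact{\nu}{g\cdot(1/g)}=\nu$. The ``subprobability iff finite'' equivalence in the statement is immediate by rescaling $g$ by a uniform bound.

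The only delicate point I anticipate is the bookkeeping in part~1: one must simultaneously preserve uniform finiteness of each summand and pairwise mutual singularity, which is why refining by the level sets of $f$ is essential (a naive grouping as in the s-finite proof would give only an s-finite sum). Pleasantly, the explicit partition-indicator construction of $f$ in part~2 is manifestly jointly measurable and $(0,\infty)$-valued, so---in contrast to part~4 of Theorem~\ref{thm:charac-sfinite}---no standard-Borel or discreteness hypothesis on $X$ or $Y$ is needed here.
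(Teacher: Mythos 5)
Your proposal is correct and follows essentially the same route as the paper: part 1 refines the mutually singular partition carrying the $\nu_i$ by the level sets $f^{-1}([n,n+1))$ (possible precisely because $f$ avoids $\infty$), part 2 reweights by powers of $2$ on the cells of the partition, and part 3 combines the two via $g\mapsto 1/g$. The one bookkeeping slip is in part 2: if you rescale each $\nu_n$ to a subprobability kernel $\nu_n/c_n$ with $c_n:=\max\{1,\sup_{x}\nu_n(x,Y)\}$ and set $\mu:=\sum_n 2^{-(n+1)}\nu_n/c_n$, then the weight must be $f:=\sum_n 2^{n+1}c_n\,\chi_{B_n}$ rather than $\sum_n 2^{n+1}\chi_{B_n}$, since otherwise $\kact{\mu}{f}$ recovers $\sum_n\nu_n/c_n$ instead of $\nu$; with that constant restored (it is exactly the reciprocal of the paper's $g$) the computation goes through, and your remark that no standard Borel or discreteness hypothesis is needed here is also correct.
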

\begin{proof}
1. Let us write $B_m:= f^{-1}([m,m+1))$ and $f^m$ for the measurable function
that is equal to $f$ on $B_m$ and $0$ elsewhere.
Note that $f=\sum_{m\in\naturals} f^m$.
Note further that $f^m$ is bounded by $m+1$.

Decompose $\nu$ as a sum $\sum_{n\in\naturals}\nu_n$ of mutually singular
finite kernels such that
\begin{itemize}
\item $A_n\in\Sigma_{X\times Y}$;
\item the sets $A_n$ partition $X\times Y$;
\item for all $x\in X$, $n\neq n'$ implies that $\nu_n(x,A_{n'}^x)=0$.
\end{itemize}

Then, $\kact{\nu}{ f}=
\kact{(\sum_{n\in\naturals}\nu_n)}{(\sum_{m\in\naturals}f^m)}
= \sum_{n,m\in\naturals} \kact{\nu_n}{ f^m}$.

We are done if we can show that the $\kact{\nu_n}{ f^m}$ are mutually
singular.
To observe this, note that
\begin{itemize}
\item $A_n\cap B_m\in \Sigma_{X\times Y}$;
\item the sets $A_n\cap B_m$, $(n,m\in\naturals)$, partition $X\times Y$ as
the sets $A_n$, $(n\in\naturals)$, and $B_m$, $(m\in\naturals)$ partition
$X\times Y$; 
\item $
\kact{\nu_n}{ f^m}(x)$ is clearly supported in
$(A_n\cap B_m)^x= A_n^x\cap B_m^x$.
\end{itemize}

We conclude that $\kact{\nu}{ f}$ is a $\sigma$-finite kernel.

2.
Decompose $\nu=\sum_{n\in\naturals}\nu_n$ as a sum of mutually singular finite
kernels.
Let $\{A_n\}_{n\in\naturals}$ be a measurable partition of $X\times Y$ such
that $\nu_n$ is supported in $A_n$.
Let $g(x,y):=(2^{n+1}\cdot \max\{1,\sup_{x\in X}\nu_n(x, Y)\})^{-1}$ for
$(x,y)\in A_n$.
Note that $g:X\times Y\to (0,\infty)$ is measurable.
Then,
$\sup_{x\in X}(\kact{\nu}{ g})(x,Y)\leq \sum_{n\in \naturals} 1/2^{n+1}=1$,
so $\mu=\kact{\nu}{ g}$ is a subprobability kernel.
Note that $f:=1/g:X\times Y\to (0,\infty)$ is also measurable and that
$\nu=\kact{\mu}{ f}$.

3. Suppose that $\nu$ is $\sigma$-finite.
The function $g$ in the proof of 2. has the required property.

Conversely, suppose there is a $g:X\times Y\to (0,\infty)$ such that
$\mu=\kact{\nu}{ g}$ is a subprobability kernel.
Note that $f=1/g:X\times Y\to (0,\infty)$ is measurable and that
$\nu=\kact{\mu}{ f}$.
Then, by the first part, it follows that $\nu$ is $\sigma$-finite.
\end{proof}

\begin{theorem}[Another Characterisation of S-finite Kernels]\label{thm:sfinalt}
Assume either that $X$ is countable and discrete or that $Y$ is a standard Borel
space. A kernel 
$\nu:X\leadsto Y$ is s-finite iff there exists a $\sigma$-finite kernel
$\mu:X\leadsto Y$ and a jointly measurable function
$f:X\times Y\to \{1,\infty\}$ such that 
$\nu=\kact{\mu}{ f}$.
Moreover, for all $x\in X$, $f(x,-)$ is unique $\nu(x)$-almost everywhere
and, for $A\in\Sigma_Y$, $\mu(x,A)$ is unique if $\infty\notin f(x,A)$.
\end{theorem}
\begin{proof}
Suppose that $\nu$ is s-finite.
By part 4 of Theorem~\ref{thm:charac-sfinite} and the present hypothesis, we get
a subprobability kernel $\mu'$ and a jointly measurable function
$f':X\times Y\to[0,\infty]$ such that $\nu=\kact{\mu'}{ f'}$.
Let $f(x,y):=f'(x,y)$ if $f'(x,y)=\infty$ and $f(x,y)=1$ otherwise.
Then $f:X\times Y\to\{1,\infty\}$ is jointly measurable.
Let $g(x,y):=f'(x,y)$ if $f'(x,y)\neq \infty$ and $g(x,y)=1$ otherwise.
Then $g:X\times Y\to [0,\infty)$ is jointly measurable.
Define $\mu:= \kact{\mu'}{ g}$ and observe that it is $\sigma$-finite by
part 1 of Theorem~\ref{thm:characsigmaf}.
Moreover, as $f'=g\cdot f$, we have
$\nu=\kact{\mu'}{ f'}= \kact{\mu'}{ (g\cdot f)}=
\kact{(\kact{\mu'}{ g})}{ f}=\kact{\mu}{ f}$.

Conversely, suppose that $\nu=\kact{\mu}{ f}$ for $\sigma$-finite $\mu$ and
jointly measurable $f:X\times Y\to \{1,\infty\}$.
Note that $\mu$ is in particular s-finite.
Then, by part 3 of Theorem~\ref{thm:charac-sfinite}, it follows that $\nu$ is s-finite.

For the uniqueness statement, suppose also that $\nu=\kact{\mu'}{ f'}$ with
$\mu'$ $\sigma$-finite and $f':X\times Y\to\{1,\infty\}$ jointly measurable.
That is, for all $x\in X$ and $A\in \Sigma_Y$, we have
$$
\int_A \mu(x,\dif y) f(x,y)=\nu(x,A)=\int_A \mu'(x,\dif y) f'(x,y).
$$
It is clear that $f$ and $f'$ can differ on a $\nu(x,-)$-null set.
Suppose, for instance, that $f=1$ but $f'=\infty$ on some measurable
$\nu(x,-)$-non-null set $A$.
Since both $\mu(x,-)$ and $\mu'(x,-)$ are $\sigma$-finite, there exists a
measurable subset $B\subseteq A$ such that $\mu(x,B)<\infty$ and
$0<\mu'(x,B)<\infty$.
It then follows that
$$
\nu(x,B)=\int_B \mu'(x,\dif y) f'(x,y)=\int_B\mu'(x,\dif y) \infty=\infty
$$
while also
$$
\nu(x,B)=\int_B \mu(x,\dif y) f(x,y)=\int_B\mu(x,\dif y)=\mu(x,B)<\infty,
$$
which is a contradiction.
The case where $f=\infty$ but $f'=1$ on a $\nu(x,-)$-non-null set is symmetric.
It follows that $f(x,-)$ and $f'(x,-)$ only differ on a $\nu(x,-)$-null set.

Suppose that $\infty\notin f(x,A)$.
Then, it follows that 
$$
\mu(x,A)=\int_A \mu(x,\dif y)=\int_A\mu'(x,\dif y) =\mu'(x,A).
$$
\end{proof}

To establish Radon-Nikod\'ym and disintegration theorems for 
s-finite kernels later, we shall need the following definition of what we 
shall call a $0$-$\infty$-set to complement that of a null set.
Let us say that $U\in \Sigma_X$ is an \emph{$0$-$\infty$-set} with respect to a measure
$\mu$ on $X$ if for all $V\in\Sigma_U$ we have $\mu(V)=0$ or $\infty$.
In particular, any $\mu$-null set is a $\mu$-$0$-$\infty$-set, which we shall refer
to as a \emph{trivial $0$-$\infty$-set}.
We note that $\sigma$-finite measures $\mu$ do not have any non-trivial
$0$-$\infty$-sets because {any set of infinite $\mu$-measure must have a countable
partition of finite $\mu$-measure}.
As we shall see in Theorem~\ref{thm:topinftysets}, the possession of non-trivial
$0$-$\infty$-sets is a key distinguishing feature of s-finite measures compared to
$\sigma$-finite measures.

In some cases (in particular, if $\mu$ is s-finite), it turns out that there is,
in some sense, a largest $0$-$\infty$-set $\infty[\mu]$.
Indeed, observe that we can always obtain
another $\mu$-$0$-$\infty$-set from a given one by taking its union with some
$\mu$-null set.
We call a $\mu$-$0$-$\infty$-set $A$ a \emph{top $0$-$\infty$-set} if for all other
$0$-$\infty$-sets $B$, we have $\mu(B\setminus A)=0$.
It is clear that, if such a top $0$-$\infty$-set exists, it is unique up to
null sets and we shall write $\infty[\mu]$ for it.

\begin{theorem}\label{thm:topinftysets}
An s-finite measure $\mu$ on $X$ has a ($\mu$-a.e.\ unique) top $0$-$\infty$-set
$\infty[\mu]$.
$\mu$ is $\sigma$-finite iff $\infty[\mu]$ is trivial.
\end{theorem}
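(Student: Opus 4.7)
The plan is to build the top $0$-$\infty$-set explicitly using the representation of s-finite measures from Theorem \ref{thm:sfinalt}. Specialising that theorem to the case $X = 1$ (a measure on $Y$), we may write $\mu = \kact{\mu_0}{f}$ where $\mu_0$ is a $\sigma$-finite measure on $Y$ and $f : Y \to \{1, \infty\}$ is measurable. Set $A_\infty := f^{-1}(\{\infty\})$ and $A_1 := f^{-1}(\{1\})$. My candidate for $\infty[\mu]$ will be $A_\infty$.

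First I would verify that $A_\infty$ is a $0$-$\infty$-set: for any measurable $V \subseteq A_\infty$, $\mu(V) = \int_V \mu_0(\dif y)\,\infty$, which by the convention $\infty\cdot 0 = 0$ equals $0$ if $\mu_0(V) = 0$ and equals $\infty$ otherwise. Next I would show maximality: given any $0$-$\infty$-set $B$, use $\sigma$-finiteness of $\mu_0$ to cover $A_1$ by a countable family $\{C_n\}$ with $\mu_0(C_n) < \infty$. Since $f \equiv 1$ on $A_1$, $\mu$ and $\mu_0$ agree on measurable subsets of $A_1$, so $\mu(B \cap A_1 \cap C_n) = \mu_0(B \cap A_1 \cap C_n) < \infty$. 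But $B \cap A_1 \cap C_n \subseteq B$ is a $\mu$-$0$-$\infty$-subset, so finiteness forces $\mu(B \cap A_1 \cap C_n) = 0$. Summing over $n$ gives $\mu(B \setminus A_\infty) = \mu(B \cap A_1) = 0$, as required. The $\mu$-a.e. uniqueness is then immediate: any two top $0$-$\infty$-sets $A, A'$ satisfy $\mu(A \triangle A') = 0$ by applying the defining property of each to the other.

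For the second statement, one direction is essentially the observation already made in the text: if $\mu$ is $\sigma$-finite, then every set of infinite $\mu$-measure has a countable partition of finite-measure pieces, so no non-trivial $0$-$\infty$-set can exist, and in particular $\infty[\mu]$ is trivial. Conversely, if $\infty[\mu]$ is trivial, then $\mu(A_\infty) = 0$, which (because $f \equiv \infty$ on $A_\infty$ and $\infty\cdot r = 0$ only when $r = 0$) forces $\mu_0(A_\infty) = 0$ as well. Hence $\mu(A) = \mu_0(A \cap A_1)$ for every $A \in \Sigma_Y$, and $\sigma$-finiteness of $\mu_0$ transfers to $\mu$ via any countable finite-measure cover of $A_1$ augmented by $A_\infty$.

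The main obstacle is the maximality argument in the first paragraph: we need to combine the s-finite decomposition with the $\sigma$-finiteness of the witnessing $\mu_0$ to squeeze any putative $0$-$\infty$-set $B$ down to its intersection with $A_\infty$. Once one sees that on $A_1$ the measures $\mu$ and $\mu_0$ coincide, the finite-vs-$\{0,\infty\}$ dichotomy closes the argument cleanly; the remaining claims are routine bookkeeping with the convention $0 \cdot \infty = 0$.
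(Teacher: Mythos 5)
Your proposal is correct and follows essentially the same route as the paper: both invoke Theorem \ref{thm:sfinalt} to write $\mu=\kact{\mu_0}{f}$ with $\mu_0$ $\sigma$-finite and $f$ valued in $\{1,\infty\}$, identify $f^{-1}(\infty)$ as the top $0$-$\infty$-set, and prove maximality by noting that $\mu$ agrees with the $\sigma$-finite $\mu_0$ on the complement (you do this by hand with a countable finite-measure cover where the paper cites Theorem \ref{thm:characsigmaf}). The converse of the second statement is likewise the same argument in slightly more explicit form.
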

\begin{proof}
For the second statement, note that, since a $\sigma$-finite measure $\mu$ arises as
a sum of mutually singular finite measures, we have $\mu(A)=\infty$ implies
that there is some $B\in \Sigma_A$ such that $0< \mu(B)<\infty$.
This shows that $\sigma$-finite measures only have trivial $0$-$\infty$-sets.
It follows that $\infty[\mu]$ is trivial for a $\sigma$-finite measure.

Now, suppose that $\mu$ is a more general s-finite measure.
By Theorem~\ref{thm:sfinalt}, we obtain a $\sigma$-finite measure $\nu$ on $X$
together with a measurable function $f:X\to \{1,\infty\}$ such that
$\mu=\kact{\nu}{ f}$.
The claim is that $f^{-1}(\infty)$ is a top $0$-$\infty$-set for $\mu$.
Indeed, observe that $\mu|_{X\setminus f^{-1}(\infty)}=
\kact{\nu|_{X\setminus f^{-1}(\infty)}}{ f|_{X\setminus f^{-1}(\infty)}}$
is $\sigma$-finite by part 1 of Theorem~\ref{thm:characsigmaf}, since $\nu$ is
$\sigma$-finite and $f|_{X\setminus f^{-1}(\infty)}<\infty$.
Therefore, $\mu|_{X\setminus f^{-1}(\infty)}$ does not have any non-trivial
$0$-$\infty$-sets.
It follows that $f^{-1}(\infty)$ is a top $0$-$\infty$-set.

Finally, suppose that $\infty[\mu]$ is trivial.
Since top $0$-$\infty$-sets are a.e.\ unique, this means that
$\mu(f^{-1}(\infty))=0$ and therefore also $\nu(f^{-1}(\infty))=0$.
Let $g(x)=f(x)$ if $f(x)<\infty$ and $g(x)=1$ otherwise.
Then, it follows that $\mu=\kact{\nu}{ f}=\kact{\nu}{ g}$.
However, $\nu$ is $\sigma$-finite and $g<\infty$, so part 1 of Theorem~\ref{thm:characsigmaf} implies that $\mu$ is $\sigma$-finite.
\end{proof}

For s-finite measures, non-trivial $0$-$\infty$-sets are, in a sense, sets of
infinite measure ``in a bad way''.
\begin{lemma}
For every s-finite measure $\mu$ on $X$, a measurable set $U$ is not finitely 
approximable in the sense that
$$
\mu(U)\neq \sup\{\mu(V) \mid V\in\Sigma_U,\ \mu(V)<\infty\}
$$
iff $U\cap \infty[\mu]$ is a non-trivial $0$-$\infty$-set and 
$\mu(U\setminus \infty[\mu])<\infty$.
In particular, every non-trivial $0$-$\infty$-set is not finitely approximable.
\end{lemma}
\begin{proof}
Let $I=\infty[\mu]$.
By Theorem~\ref{thm:topinftysets}, $\mu$ is $\sigma$-finite on $X\setminus I$,
so every measurable subset of $X\setminus I$ is finitely approximable.
If $\mu(U\cap I)=0$, then finite approximations inside $U\setminus I$ already
approximate $\mu(U)$.
If $\mu(U\cap I)=\infty$, then every finite-measure subset of $U$ has
$\mu$-null intersection with $I$, and the supremum of the finite measures of
subsets of $U$ is therefore $\mu(U\setminus I)$ if this is finite, and is
$\infty$ otherwise.
The displayed equivalence follows.
\end{proof}

Our conclusion is that every s-finite measure $\mu$ on $X$ decomposes into a
``good'' $\sigma$-finite part on $X\setminus \infty[\mu]$ and a
``badly infinite'' part on $\infty[\mu]$.

We briefly make an observation about the situation for s-finite kernels.
\begin{lemma}\label{lem:kernelinftyset}
Let $k:X\leadsto Y$ be an s-finite kernel with $X$ countable and discrete or
$Y$ standard Borel.
Then, there exists a set $\infty[k]\in\Sigma_{X\times Y}$ such that
$\infty[k]^x$ is a top $0$-$\infty$-set for $k(x)$ for all $x\in X$.
Moreover, $\kact{k}{ \chi_{X\times Y\setminus \infty[k]}}$ is
$\sigma$-finite.
\end{lemma}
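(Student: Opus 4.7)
The plan is to bootstrap the existence of the top $0$-$\infty$-set $\infty[k(x)]$ from Theorem \ref{thm:topinftysets} to a jointly measurable choice, using the decomposition afforded by Theorem \ref{thm:sfinalt}. Concretely, I would first invoke Theorem \ref{thm:sfinalt} to write $k = \kact{\mu}{f}$ for some $\sigma$-finite kernel $\mu:X\leadsto Y$ and some $f:X\times Y\to\{1,\infty\}$. The hypothesis that $X$ is countable and discrete or $Y$ is standard Borel is used here, and only here: it guarantees that $f$ can be taken to be jointly measurable in $X$ and $Y$. I then set
\[
\infty[k]\;:=\;f^{-1}(\{\infty\})\in\Sigma_{X\times Y},
\]
which is measurable precisely because of this joint measurability.

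Next I would verify that $\infty[k]^{x}$ is a top $0$-$\infty$-set of $k(x)$ for every $x\in X$, by running the pointwise argument already used in the proof of Theorem \ref{thm:topinftysets}. Fix $x$; since $f(x,-)$ only takes the values $1$ and $\infty$, any measurable $B\subseteq \infty[k]^{x}$ satisfies $k(x,B)=\int_B \mu(x,\dif y)\,\infty$, which is $0$ if $\mu(x,B)=0$ and $\infty$ otherwise, so $\infty[k]^{x}$ is a $0$-$\infty$-set. For maximality, observe that on $Y\setminus \infty[k]^{x}$ the function $f(x,-)$ equals $1$, so the restriction $k(x,-)|_{Y\setminus \infty[k]^{x}}$ agrees with $\mu(x,-)|_{Y\setminus \infty[k]^{x}}$, which is $\sigma$-finite and therefore has no non-trivial $0$-$\infty$-subsets by Theorem \ref{thm:topinftysets}. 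Hence any other $0$-$\infty$-set of $k(x)$ differs from $\infty[k]^{x}$ only on a null set.

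For the final assertion, I would compute directly: for each $x\in X$ and $A\in\Sigma_Y$,
\[
\kact{k}{\chi_{X\times Y\setminus \infty[k]}}(x,A)
\;=\;\int_A k(x,\dif y)\,\chi_{\{f(x,-)=1\}}(y)
\;=\;\mu\bigl(x,A\cap\{y:f(x,y)=1\}\bigr),
\]
the last equality using $k(x,\dif y)=f(x,y)\,\mu(x,\dif y)$ and $f=1$ on the relevant set. This exhibits $\kact{k}{\chi_{X\times Y\setminus \infty[k]}}$ as the restriction of the $\sigma$-finite kernel $\mu$ to the measurable set $X\times Y\setminus \infty[k]$. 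Writing $\mu=\sum_n \mu_n$ as a sum of mutually singular finite kernels, the corresponding restrictions $\mu_n|_{X\times Y\setminus \infty[k]}$ are still finite and mutually singular, so the restriction is $\sigma$-finite as required.

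The only real subtlety is the measurability of $\infty[k]$ as a subset of the product $X\times Y$; all the per-fibre statements follow by copying the Theorem \ref{thm:topinftysets} argument, so the entire weight of the lemma rests on having a jointly measurable $f$ in Theorem \ref{thm:sfinalt}, which is exactly what the hypothesis on $X$ or $Y$ buys us.
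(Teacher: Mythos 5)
Your proposal is correct and follows essentially the same route as the paper: invoke Theorem \ref{thm:sfinalt} to write $k=\kact{\mu}{f}$ with $f:X\times Y\to\{1,\infty\}$ jointly measurable, set $\infty[k]:=f^{-1}(\infty)$, verify the fibrewise top-$0$-$\infty$ property via the argument of Theorem \ref{thm:topinftysets}, and observe that $\kact{k}{\chi_{X\times Y\setminus\infty[k]}}$ coincides with the restriction of the $\sigma$-finite kernel $\mu$. The only cosmetic difference is that the paper concludes $\sigma$-finiteness of this last kernel by citing Theorem \ref{thm:characsigmaf} 1., whereas you unpack the same fact directly via the mutually singular finite decomposition of $\mu$.
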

\begin{proof}
By Theorem~\ref{thm:sfinalt}, we have $k=\kact{l}{ f}$ for a
$\sigma$-finite kernel $l:X\leadsto Y$ and $f:X\times Y\to \{1,\infty\}$
measurable.
We can define $\infty[k]:=f^{-1}(\infty)$, which is measurable.
Then, it is clear from the preceding argument that $\infty[k]^x$ is a top $0$-$\infty$-set
for $k(x)$ for all $x\in X$.
Finally, observe that
$\kact{k}{\chi_{X\times Y\setminus \infty[k]}}=
\kact{l}{\chi_{X\times Y\setminus \infty[k]}}$ to see that 
$\kact{k}{\chi_{X\times Y\setminus \infty[k]}}$ is $\sigma$-finite by
part 1 of Theorem~\ref{thm:characsigmaf}.
\end{proof}

We call such a set $\infty[k]$ a top $0$-$\infty$-set for the kernel $k$.

\section{Radon-Nikod\'ym}
In practice, rather than describing probabilistic models in terms of measures
and kernels, one often simply describes their \emph{density} with respect to
some reference measure (usually the Lebesgue measure or a counting measure).
Let us therefore turn to the question of when such densities exist for
s-finite kernels.
The answer is given by the Radon-Nikod\'ym Theorem.

\begin{definition}[Radon-Nikod\'ym Derivative/Density] Let $\nu$ and $\nu'$ 
be kernels from $X\leadsto Y$.
By a \emph{Radon-Nikod\'ym derivative} (or density) of $\nu'$ with respect to 
$\nu$, we mean a function $f:X\times Y\to [0,\infty]$, with $f(x,-)$ measurable
for every $x\in X$, such that $\nu'= \kact{\nu}{ f}$.
We sometimes write $\dif \nu'/\dif \nu$ for $f$.
\end{definition}
Note that, in general, a density $f$ of a kernel $\nu':X\leadsto Y$ with respect
to $\nu:X\leadsto Y$ is simply a collection $\{f(x,-)\}_{x\in X}$ of densities
of $\nu'(x)$ with respect to $\nu(x)$.
In practice, however, we shall be particularly interested in cases where $f$
can be taken to be jointly measurable in $X$ and $Y$.

Recall that $\nu'$ is \emph{absolutely continuous} with respect to $\nu$, 
written $\nu'\ll \nu$, if, for all $x\in X$, for all $\nu(x)$-null sets $U$,
$U$ is a $\nu'(x)$-null set.
For kernels $\nu',\nu:X\leadsto Y$, let us say that $\nu'$ is 
\emph{$0$-$\infty$-absolutely continuous} (write $\nu'\lli \nu$) with respect to 
$\nu$ if $\nu'\ll\nu$, and for all $x\in X$, for all $\nu(x)$-$0$-$\infty$-sets $U$,
$U$ is a $\nu'(x)$-$0$-$\infty$-set.
Note that for a $\sigma$-finite kernel $\nu$, we have $\nu'\lli\nu$ iff 
$\nu'\ll \nu$, vacuously.

\begin{lemma}
Suppose that $\nu,\nu':X\leadsto Y$ are kernels such that for all $x\in X$,
$\nu(x),\nu'(x)$ have top $0$-$\infty$-sets
(for instance, if $\nu,\nu'$ are s-finite).
Then, $\nu'\lli \nu$ iff $\nu'\ll \nu$ and for all $x\in X$
$$
\nu'(x,\infty[\nu(x)]\setminus\infty[\nu'(x)])=0.
$$
\end{lemma}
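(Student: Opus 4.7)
The plan is to prove both implications directly from the definitions of $\lli$, $\ll$, and top $0$-$\infty$-sets, noting that $\infty[\nu(x)]$ and $\infty[\nu'(x)]$ are only defined up to null sets of $\nu(x)$ and $\nu'(x)$ respectively. A preliminary observation I would record is that the family of $\mu$-$0$-$\infty$-sets is closed under binary union: if $V \subseteq A \cup B$ with $A, B$ both $\mu$-$0$-$\infty$-sets, the decomposition $V = (V \cap A) \cup (V \setminus A)$ has both parts of $\mu$-measure in $\{0,\infty\}$, so by additivity $\mu(V) \in \{0,\infty\}$.

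For the forward direction, fix $x \in X$ and assume $\nu' \lli \nu$. The set $\infty[\nu(x)]$ is a $\nu(x)$-$0$-$\infty$-set, so by hypothesis it is also a $\nu'(x)$-$0$-$\infty$-set. Using union-closure, $\infty[\nu(x)] \cup \infty[\nu'(x)]$ is a $\nu'(x)$-$0$-$\infty$-set containing the top one, so by the top property of $\infty[\nu'(x)]$ we have $\nu'(x,\infty[\nu(x)] \setminus \infty[\nu'(x)]) = 0$ and $\infty[\nu(x)] \cup \infty[\nu'(x)]$ is itself a top $\nu'(x)$-$0$-$\infty$-set. Replacing the representative of $\infty[\nu'(x)]$ by this larger one (legitimate since top $0$-$\infty$-sets are only defined up to null sets), the difference $\infty[\nu(x)] \setminus \infty[\nu'(x)]$ becomes empty, and the displayed equation holds trivially.

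For the reverse direction, assume $\nu' \ll \nu$ together with the displayed equation, and let $U \in \Sigma_Y$ be an arbitrary $\nu(x)$-$0$-$\infty$-set. To show $U$ is a $\nu'(x)$-$0$-$\infty$-set I would fix $V \in \Sigma_U$ and decompose it as the disjoint union $V_1 \cup V_2 \cup V_3$ where $V_1 := V \setminus \infty[\nu(x)]$, $V_2 := V \cap (\infty[\nu(x)] \setminus \infty[\nu'(x)])$, and $V_3 := V \cap \infty[\nu(x)] \cap \infty[\nu'(x)]$. The top property of $\infty[\nu(x)]$ applied to the $\nu(x)$-$0$-$\infty$-set $U$ gives $\nu(x,V_1) \leq \nu(x, U \setminus \infty[\nu(x)]) = 0$, and the displayed hypothesis gives $\nu(x,V_2) = 0$; by $\ll$ both pieces have $\nu'(x)$-measure zero. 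Finally $V_3 \subseteq \infty[\nu'(x)]$ is a measurable subset of a $\nu'(x)$-$0$-$\infty$-set, so $\nu'(x,V_3) \in \{0,\infty\}$. Adding yields $\nu'(x,V) = \nu'(x,V_3) \in \{0,\infty\}$, as required.

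The main subtlety I anticipate is in the forward direction: the displayed equation measures, using $\nu$, a set defined in terms of $\infty[\nu'(x)]$, which is only well-defined up to $\nu'$-null sets, so one must interpret the representative of $\infty[\nu'(x)]$ appropriately. The union closure of $\nu'$-$0$-$\infty$-sets is exactly what lets us enlarge $\infty[\nu'(x)]$ to contain $\infty[\nu(x)]$ and validate the equation; once the representatives are chosen, the rest is routine case-splitting driven by $\ll$ and the top property.
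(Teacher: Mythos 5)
Your proof is correct, and it follows the route the paper intends---top property plus absolute continuity plus a case split---but you execute it more carefully than the paper's own write-up does. The paper's argument has the roles of $\nu$ and $\nu'$ interchanged in several places relative to its stated definition of $\lli$: in the forward direction it argues that $\infty[\nu'(x)]$ is a $\nu(x)$-$0$-$\infty$-set (which is the converse of what $\nu'\lli\nu$ gives), and in the converse it starts from a $\nu'(x)$-$0$-$\infty$-set and at one point deduces $\nu(x,A)=0$ from $\nu'(x,A)=0$ ``as $\nu'(x)\ll\nu(x)$'', which is the wrong direction of absolute continuity. Your version---starting from a $\nu(x)$-$0$-$\infty$-set and splitting $V$ into $V\setminus\infty[\nu(x)]$, $V\cap(\infty[\nu(x)]\setminus\infty[\nu'(x)])$ and $V\cap\infty[\nu(x)]\cap\infty[\nu'(x)]$---is the correct argument, and the explicit union-closure observation is a useful ingredient the paper leaves implicit.

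You are also right to worry about the choice of representative of $\infty[\nu'(x)]$ in the forward direction, and this is not merely a presentational issue: for $Y=\{a,b\}$, $\nu=\infty\delta_a+\infty\delta_b$, $\nu'=\infty\delta_a$ one has $\nu'\lli\nu$, yet $\{a\}$ is a legitimate top $\nu'$-$0$-$\infty$-set and $\nu(\infty[\nu]\setminus\{a\})=\infty$. So the displayed equation, read for an arbitrary representative, fails, and your resolution---enlarge the representative of $\infty[\nu'(x)]$ to $\infty[\nu(x)]\cup\infty[\nu'(x)]$, which you correctly verify is again a top $0$-$\infty$-set---is the right fix. A representative-independent repair is to replace $\nu$ by $\nu'$ in the display, i.e.\ to require $\nu'(x,\infty[\nu(x)]\setminus\infty[\nu'(x)])=0$: your forward argument proves exactly that, and your converse goes through verbatim with the $V_2$ piece handled directly by this hypothesis rather than via $\ll$.
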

\begin{proof}
Suppose that $\nu'\lli \nu$.
Then, by definition $\nu'\ll \nu$.
Moreover, $\infty[\nu(x)]$ is a $\nu(x)$-$0$-$\infty$-set, so it is also a
$\nu'(x)$-$0$-$\infty$-set and is therefore $\nu'(x)$-almost everywhere contained
in $\infty[\nu'(x)]$.

Conversely, suppose that $\nu'\ll \nu$ and for all $x\in X$
$$
\nu'(x,\infty[\nu(x)]\setminus\infty[\nu'(x)])=0.
$$
Let $U\in \Sigma_Y$ be a $\nu(x)$-$0$-$\infty$-set.
Then, $\nu(x,U\setminus \infty[\nu(x)])=0$ as $\infty[\nu(x)]$ contains all
$\nu(x)$-$0$-$\infty$-sets $\nu(x)$-almost everywhere.
Therefore, $\nu'(x,U\setminus\infty[\nu(x)])=0$ as $\nu'(x)\ll\nu(x)$.
It follows from the displayed assumption that
$\nu'(x,U\setminus \infty[\nu'(x)])=0$.
Thus $U$ is $\nu'(x)$-almost everywhere contained in $\infty[\nu'(x)]$, so $U$
is a $\nu'(x)$-$0$-$\infty$-set.
\end{proof}

Next, we illustrate in what sense this concept is relevant to the question of
the existence of densities.
\begin{lemma}\label{lem:tri-abs-cts}
Suppose that $\nu'$ has a density $f:X\times Y\to [0,\infty]$ with respect to
$\nu$, i.e.~$\nu'=\kact{\nu}{ f}$. Then $\nu'\lli \nu$.
\end{lemma}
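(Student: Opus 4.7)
My plan is to unpack the definition of $\lli$ and verify its two clauses separately: that $\nu'\ll \nu$ holds, and that every $\nu(x)$-$0$-$\infty$-set is a $\nu'(x)$-$0$-$\infty$-set. The first clause is the usual fact that a measure defined by a density is absolutely continuous with respect to the reference: if $\nu(x,A)=0$ then $\nu'(x,A)=\int_A\nu(x,\dif y)\,f(x,y)=0$, since the integral of a non-negative function over a null set vanishes. This step is entirely routine.

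For the second clause, fix $x\in X$ and a $\nu(x)$-$0$-$\infty$-set $U$, and take any $V\in\Sigma_U$; the goal is $\nu'(x,V)\in\{0,\infty\}$. The case $\nu(x,V)=0$ is immediate from the first clause, so I would assume $\nu(x,V)=\infty$ and, towards a contradiction, $0<\nu'(x,V)<\infty$. The central move is to slice $V$ by the level sets of $f(x,-)$, setting $V_n:=V\cap\{y:f(x,y)\geq 1/n\}$ for $n\geq 1$. Points where $f(x,-)=0$ contribute nothing to $\nu'(x)$, so $\nu'(x,\bigcup_n V_n)=\nu'(x,V)>0$, and continuity of measure along $V_n\uparrow \bigcup_n V_n$ yields some $n$ with $\nu'(x,V_n)>0$.

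Now $V_n\subseteq U$, so $V_n$ is itself a $\nu(x)$-$0$-$\infty$-set and $\nu(x,V_n)\in\{0,\infty\}$. If $\nu(x,V_n)=0$, the first clause gives $\nu'(x,V_n)=0$, contradicting $\nu'(x,V_n)>0$; if $\nu(x,V_n)=\infty$, then the uniform lower bound $f(x,-)\geq 1/n$ on $V_n$ forces
$$
\nu'(x,V_n)=\int_{V_n}\nu(x,\dif y)\,f(x,y)\geq (1/n)\,\nu(x,V_n)=\infty,
$$
contradicting $\nu'(x,V_n)\leq \nu'(x,V)<\infty$. Either branch yields a contradiction, so $\nu'(x,V)\in\{0,\infty\}$.

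The only non-mechanical ingredient is the level-set slicing in the second clause, and this is where I expect the (mild) main obstacle to lie: the role of the slicing is precisely to convert pointwise positivity of $f(x,-)$ on the part of $V$ that actually carries $\nu'(x)$-mass into a \emph{uniform} positive lower bound, which lets one transfer infinite $\nu(x)$-mass to infinite $\nu'(x)$-mass. It is worth noting that the argument never appeals to s-finiteness of $\nu$ or $\nu'$, nor to the existence of top $0$-$\infty$-sets; it uses only the density relation $\nu'=\kact{\nu}{f}$ and the bare definition of a $0$-$\infty$-set, so the lemma holds in full generality.
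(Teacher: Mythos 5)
Your proof is correct, and it reaches the conclusion by a genuinely different route from the paper. For the second clause the paper argues directly: it writes $f(x,-)$ on $V$ as a countable $[0,\infty]$-linear combination $\sum_i w_i\chi_{V_i}$ of characteristic functions of measurable $V_i\subseteq V\subseteq U$, so that $\nu'(x,V)=\sum_i w_i\,\nu(x,V_i)$ with each $\nu(x,V_i)\in\{0,\infty\}$, and then observes that $\{0,\infty\}$ is closed under countable $[0,\infty]$-linear combinations (using the convention $\infty\cdot 0=0$). You instead slice by superlevel sets $V_n=V\cap\{f(x,-)\geq 1/n\}$ and run a contradiction argument, using continuity from below to locate a level set carrying positive $\nu'(x)$-mass and the uniform lower bound $1/n$ to transfer infinite $\nu(x)$-mass to infinite $\nu'(x)$-mass. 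The paper's computation is a one-liner once the simple-function decomposition is invoked; your version trades that representation theorem for continuity from below and monotonicity of the integral, which is marginally more self-contained, at the cost of an indirect argument. Your closing remark that no s-finiteness is needed is accurate and consistent with the paper, which also states the lemma for arbitrary kernels. One point worth making explicit in your write-up: the measurability of the sets $\{y: f(x,y)\geq 1/n\}$ rests on $f(x,-)$ being measurable for each fixed $x$, which is part of what the paper's notion of density implicitly requires (otherwise $\kact{\nu}{f}$ would not be defined); the paper's proof uses the same hypothesis when it decomposes $f(x,-)$ into simple functions.
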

\begin{proof}
Note that $\nu(x,U)=0$ implies that $\nu'(x,U)=\int_U \nu(x,\dif y) f(x,y)=0$. 

Moreover, suppose that $U$ is a $\nu(x)$-$0$-$\infty$-set.
Let $V\in\Sigma_U$.
Then 
\[
\nu'(x,V)=\int_V\nu(x,\dif y)f(x,y)
=\sum_{i\in\naturals} w_i \nu(x,V_i)=0\textnormal{ 
or } \infty
\] 
where we decompose $f(x,-)$ as a countable sum $\sum_{i\in\naturals}w_i 
\chi_{V_i}$ of characteristic functions of measurable subsets $V_i$ of $V$.
As $V_i\in \Sigma_U$, we have $\nu(x,V_i)=0\textnormal{ or }\infty$.
\end{proof}

It is easy to see that such Radon-Nikod\'ym derivatives have a uniqueness 
property, if they exist; this justifies the notation $\dif \nu'/\dif \nu$.
For this purpose, let us say that two kernels $k,l$ from $X$ to $Y$ are 
\emph{almost everywhere $\infty$-equal} with respect to some measure 
$\mu$ on $X$ if
$$
\kact{k}{ (1+\infty\cdot \chi_{\infty[\mu]})}\textnormal{\qquad and\qquad}
\kact{l}{ (1+\infty\cdot \chi_{\infty[\mu]})}
$$
are $\mu$-almost everywhere equal.
That is, $k$ and $l$ are $\mu$-almost everywhere equal on
$X\setminus \infty[\mu]$ and $ \kact{k}{ \infty}$ and
$\kact{l}{ \infty}$ are
$\mu$-almost everywhere equal on $\infty[\mu]$.

Similarly, we say that a kernel $k$ from $X$ to $Y$ is \emph{almost 
everywhere $\infty$-unique} with respect to some predicate $P$ on kernels if 
all kernels that satisfy $P$ are almost everywhere $\infty$-equal.

\begin{theorem}[Uniqueness of RN-Derivatives] \label{thm:uniquern}
Let $\nu$ and $\nu'$ be s-finite measures on $X$.
Suppose $\nu'=\kact{\nu}{ f}$, and let $g:X\to [0,\infty]$ be measurable.
Then, we have 
\[
\nu'=\kact{\nu}{ g} \; \iff \; \nu([f\neq g]\cap (X\setminus 
\infty[\nu]))+\nu([g = 0 \neq f]\cap \infty[\nu])
+\nu([f = 0 \neq g]\cap \infty[\nu])=0.
\]
That is, $f$ and $g$ are almost everywhere $\infty$-equal: RN-derivatives are 
almost everywhere $\infty$-unique.
In other words, on the $\nu$-$\sigma$-finite part of $X$, $f$ and $g$ are
$\nu$-a.e.~equal; on 
its complement, the points where one has value 0 and the other has a strictly 
positive value are $\nu$-negligible.
\end{theorem}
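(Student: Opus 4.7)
My plan is to split $X$ along the top $0$-$\infty$-set supplied by Theorem~\ref{thm:topinftysets}, writing $X = X_\sigma \sqcup X_\infty$ with $X_\sigma := X \setminus \infty[\nu]$ and $X_\infty := \infty[\nu]$. On $X_\sigma$ the restriction $\nu|_{X_\sigma}$ is $\sigma$-finite, while on $X_\infty$ every measurable subset has $\nu$-measure in $\{0,\infty\}$. The three error terms in the statement then split naturally along this decomposition: the first measures the failure of $f=g$ $\nu$-a.e.\ on $X_\sigma$, while the last two jointly measure the failure of $[f=0]=[g=0]$ $\nu$-a.e.\ on $X_\infty$.

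The technical linchpin I would prove first is that for any measurable $V \subseteq X_\infty$ and any measurable $h : X \to [0,\infty]$,
\[
\int_V \nu(\dif y)\, h(y) = \begin{cases} 0 & \text{if } \nu(V \cap [h > 0]) = 0, \\ \infty & \text{otherwise.} \end{cases}
\]
To prove this I write $V \cap [h > 0] = \bigcup_{n \in \naturals} (V \cap [h \geq 1/n])$ as an increasing union, observe that each piece lies in $\infty[\nu]$ and so has $\nu$-measure $0$ or $\infty$, and invoke upward continuity: if $\nu(V \cap [h > 0]) > 0$, then some $\nu(V \cap [h \geq 1/n]) = \infty$, so the integral is at least $(1/n)\cdot \infty = \infty$; the other direction is immediate. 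The upshot is that on $X_\infty$ the integral against $\nu$ depends on $h$ only through the support $[h > 0]$ modulo $\nu$-null sets.

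For $(\Leftarrow)$, given $U \in \Sigma_X$ I split $U = U_\sigma \sqcup U_\infty$ along the decomposition of $X$. The first vanishing hypothesis forces $f = g$ $\nu$-a.e.\ on $X_\sigma$, hence $\int_{U_\sigma} f\,\dif\nu = \int_{U_\sigma} g\,\dif\nu$. The last two hypotheses together yield $\nu$-a.e.\ equality of $[f > 0]$ and $[g > 0]$ on $X_\infty$, so $\nu(U_\infty \cap [f > 0]) = \nu(U_\infty \cap [g > 0])$ and the linchpin gives $\int_{U_\infty} f\,\dif\nu = \int_{U_\infty} g\,\dif\nu$. Summing yields $\kact{\nu}{f}(U) = \kact{\nu}{g}(U)$.

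For $(\Rightarrow)$, on $X_\sigma$ I apply classical $\sigma$-finite Radon–Nikod\'ym uniqueness to $\nu|_{X_\sigma}$ (which is $\sigma$-finite by Theorem~\ref{thm:topinftysets}): since $\kact{\nu}{f}$ and $\kact{\nu}{g}$ agree on $\Sigma_{X_\sigma}$, this gives $f = g$ $\nu$-a.e.\ on $X_\sigma$. For the $X_\infty$ terms I argue by contradiction: if $V := [f = 0 \neq g] \cap X_\infty$ had $\nu(V) > 0$, then $\nu(V) = \infty$, and by the linchpin applied to $g$ on $V$ we get $\kact{\nu}{g}(V) = \infty$, whereas $\kact{\nu}{f}(V) = 0$ since $f$ vanishes on $V$ — contradicting $\kact{\nu}{f} = \kact{\nu}{g}$. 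Swapping the roles of $f$ and $g$ handles the remaining term. The main obstacle is really the linchpin: the usual $\sigma$-finite exhaustion argument underlying classical Radon–Nikod\'ym uniqueness breaks down on $X_\infty$ because there are no finite-measure exhausting subsets, so one must recognise that integration against $\nu$ on $X_\infty$ genuinely collapses to coarse support-level information, which is precisely the phenomenon captured by the notion of almost-everywhere $\infty$-equality.
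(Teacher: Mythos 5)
Your proof is correct and follows essentially the same route as the paper's: split $X$ along $\infty[\nu]$, invoke classical $\sigma$-finite uniqueness on the complement, and on $\infty[\nu]$ use that integrals against $\nu$ collapse to $\{0,\infty\}$ and depend only on the support of the integrand up to null sets. Your ``linchpin'' lemma just makes explicit the step the paper compresses into ``we must have $\kact{\nu}{f}(U)=0$ iff $\kact{\nu}{g}(U)=0$, i.e.\ ...''.
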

\begin{proof}
Note that $\nu$ is $\sigma$-finite when restricted to
$X\setminus \infty[\nu]$.
Here, $\nu([f\neq g]\cap (X\setminus \infty[\nu]))=0$ is well-known to be the 
precise uniqueness property of Radon-Nikod\'ym derivatives (see e.g. 
\citep{pollard2002user}).

So let us restrict our attention to $U\in \Sigma_{\infty[\nu]}$.
Then, either $\nu(U)=0$ or $\nu(U)=\infty$.
If $\nu(U)=0$, then it automatically follows that $\kact{\nu}{ 
f}(U)=0=\kact{\nu}{ g}(U)$.
If $\nu(U)=\infty$, then we must have $\kact{\nu}{ f}(U)=0$ iff 
$\kact{\nu}{ g}(U)=0$, i.e., 
$\nu([g = 0 \neq f]\cap \infty[\nu])+\nu([f = 0 \neq g]\cap \infty[\nu])=0$.
\end{proof}
Note that this uniqueness theorem applies, in particular, to s-finite kernels
as s-finite kernels are pointwise s-finite measures and densities of kernels
are simply pointwise densities of measures.

Showing the existence of such Radon-Nikod\'ym derivatives is less 
straightforward, but it is well-known that this can be done for 
$\sigma$-finite measures
(see, e.g., \citep[Theorem 5.5.4]{Dudley04} and \citep[Theorem 4.2.3]{Cohn80}).
We generalise this result to the s-finite setting.

\begin{theorem}[Radon-Nikod\'{y}m Theorem for S-finite Kernels]
\label{thm:radon-nikodym}
Let $\nu,\nu':X\leadsto Y$ be s-finite kernels such that 
$\nu'\lli\nu$. 
Then, there exists a function $\dif \nu'/\dif \nu:X\times Y\to[0,\infty]$, 
called a Radon-Nikod\'ym derivative of $\nu'$ with respect to $\nu$, for which 
$\dif\nu'/\dif \nu(x,-)$ is measurable in $Y$ for all $x\in X$, and
$\nu'= \kact{\nu}{ \dif\nu'/\dif \nu}$.
If $X$ is countable and discrete or
$Y$ is standard Borel, $\dif\nu'/\dif \nu$ can be
taken to be jointly measurable in $X$ and $Y$.

We note that this theorem can fail if we only require $\nu'\ll\nu$, as 
would be customary for $\sigma$-finite measures. 
\end{theorem}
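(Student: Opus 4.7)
\begin{proof*}
The plan is to reduce to the $\sigma$-finite case by expressing both kernels with a common subprobability base. By Theorem~\ref{thm:charac-sfinite}(4), write $\nu = \kact{\mu}{f}$ for some subprobability kernel $\mu : X \leadsto Y$ and $f : X \times Y \to [0,\infty]$ with $f(x,-)$ measurable and $\mu(x,[f(x,-) = 0]) = 0$ for each $x$ (jointly measurable in the two stated cases). Since $\mu(x,V) = 0$ forces $\nu(x,V) = 0$ and hence $\nu'(x,V) = 0$ by $\nu' \ll \nu$, we have $\nu'(x) \ll \mu(x)$ for all $x$. Decomposing $\nu' = \sum_n \nu'_n$ into subprobability kernels via Theorem~\ref{thm:charac-sfinite}(1) and applying the classical Radon--Nikod\'ym theorem to each $\nu'_n \ll \mu$ (in parametric form, \citep[Theorem~1.28]{kallenberg2017random}, to get joint measurability when $X$ is countable discrete or $Y$ is standard Borel), one obtains a measurable $h : X \times Y \to [0,\infty]$ with $\nu'(x) = h(x,-)\cdot\mu(x)$ pointwise in $x$.

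The candidate density is $\dif\nu'/\dif\nu := h/f$, carefully interpreted at the extremes. Define
\[
k(x,y) := \begin{cases} h(x,y)/f(x,y) & \text{if } 0 < f(x,y) < \infty, \\ \infty & \text{if } f(x,y) = h(x,y) = \infty, \\ 0 & \text{otherwise.} \end{cases}
\]
The identity to verify is $k(x,-)\cdot f(x,-) = h(x,-)$ $\mu(x)$-almost everywhere, from which $\kact{\nu}{k} = \kact{\mu}{k\cdot f} = \kact{\mu}{h} = \nu'$ follows at once. This is immediate on $\{0 < f < \infty\}$ and vacuous on the $\mu(x)$-null set $[f(x,-) = 0]$. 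The delicate case is $[f(x,-) = \infty]$: for any measurable $V$ contained in it, $\nu(x,V) = \infty\cdot\mu(x,V) \in \{0,\infty\}$, so $[f(x,-) = \infty]$ is a $\nu(x)$-$0$-$\infty$-set. Invoking $\nu' \lli \nu$ now pays off: it is therefore also a $\nu'(x)$-$0$-$\infty$-set, so $\int_V h\,\dif\mu(x) \in \{0,\infty\}$ for every measurable $V \subseteq [f(x,-) = \infty]$; slicing by the sets $[h(x,-) \in [1/n,n]]$ and using that $\mu(x)$ is finite forces $h(x,-) \in \{0,\infty\}$ $\mu(x)$-almost everywhere on $[f(x,-) = \infty]$. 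Then $kf = h$ holds on $\{f = \infty, h = \infty\}$ (both sides $\infty$) and on $\{f = \infty, h = 0\}$ (both sides $0$), finishing the verification. Joint measurability of $k$ in the special cases follows from that of $f$ and $h$ together with the measurability of the defining piecewise formula.

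The main obstacle, and the precise place where the strengthened hypothesis $\lli$ enters, is exactly this analysis on $[f = \infty]$: under mere absolute continuity $\nu' \ll \nu$, $h$ may take finite positive values there and no pointwise quotient $h/f$ can satisfy $\kact{\nu}{k} = \nu'$. A concrete witness for the final remark is $\nu := \infty \cdot \lebesgue|_{[0,1]}$ and $\nu' := \lebesgue|_{[0,1]}$: both are s-finite, $\nu' \ll \nu$ (they share the Lebesgue-null sets), but $[0,1]$ is a $\nu$-$0$-$\infty$-set while $\nu'$ takes finite positive values on it, so $\nu' \not\lli \nu$; any putative $k$ would give $\nu'(V) = \infty \cdot \int_V k\,\dif\lebesgue \in \{0,\infty\}$, contradicting $\nu'(V) = \lebesgue(V)$.
\end{proof*}
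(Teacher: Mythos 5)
Your proof is correct. The skeleton matches the paper's: express $\nu$ as a density over a well-behaved base kernel, apply the classical finite/$\sigma$-finite Radon--Nikod\'ym theorem over that base, and use $\nu'\lli\nu$ to control what happens where the density of $\nu$ is infinite. The execution differs in one genuine respect. The paper invokes Theorem~\ref{thm:sfinalt} to write $\nu=\kact{\mu}{f}$ with $\mu$ \emph{$\sigma$-finite} and $f$ valued in $\{1,\infty\}$; with that normalisation the density $\dif\nu'/\dif\mu$ is \emph{already} a density with respect to $\nu$ (on $[f=1]$ the two base measures agree, and on $[f=\infty]$ one checks $\nu'(x,A)\cdot\infty=\nu'(x,A)$ directly from $A$ being a $\nu'(x)$-$0$-$\infty$-set), so no quotient is ever formed. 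You instead start from Theorem~\ref{thm:charac-sfinite}(4), where $f$ is an arbitrary $[0,\infty]$-valued density over a \emph{subprobability} base, and must therefore build $h/f$ by hand; the price is the extra slicing argument showing $h(x,-)\in\{0,\infty\}$ holds $\mu(x)$-almost everywhere on $[f(x,-)=\infty]$, which you carry out correctly --- the finiteness of $\mu(x)$ is exactly what kills the sets $[1/n\le h\le n]$ there. Your route avoids Theorem~\ref{thm:sfinalt} entirely; the paper's $\{1,\infty\}$-valued normalisation buys a shorter verification at the cost of that extra characterisation. Your counterexample for the final remark ($\nu=\infty\cdot\uniform{[0,1]}$ against $\nu'=\uniform{[0,1]}$) is of the same type as the paper's ($\sum_{n}\uniform{\reals}$ against a normal distribution) and is valid.
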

\begin{proof}
Note that the corresponding result when $\nu,\nu'$ are
$\sigma$-finite kernels (in 
particular, finite kernels) is a special case of
\citep[Theorem 1.28]{kallenberg2017random}.
We will use this result to generalise it.

For the section-wise existence statement, fix $x\in X$ and apply the argument
below to the measures $\nu(x)$ and $\nu'(x)$, i.e. to kernels with source $1$.
Choosing one resulting derivative for each $x$ gives $\dif\nu'/\dif\nu(x,-)$
measurable in $Y$.
It remains to prove the joint measurability claim under the additional
hypothesis that $X$ is countable and discrete or that $Y$ is standard Borel.
Under this hypothesis, Theorem~\ref{thm:sfinalt} lets us write
$$
\nu=\kact{\mu}{f},
$$
where $\mu$ is a $\sigma$-finite kernel and
$f:X\times Y\to\{1,\infty\}$ is jointly measurable.
For each $x$, write $A^x:=\{y\in Y\mid f(x,y)=\infty\}$; by the proof of
Theorem~\ref{thm:topinftysets}, $A^x$ is a top $0$-$\infty$-set for $\nu(x)$.

Since $f\geq 1$, $\mu(x,B)=0$ implies $\nu(x,B)=0$ for all $x$ and $B$.
Thus $\nu'\ll\nu$ implies $\nu'\ll\mu$.
Decompose $\nu'$ as a countable sum of finite kernels and apply the classical
Radon-Nikod\'ym theorem for $\sigma$-finite kernels to each summand; by monotone
convergence, this yields a jointly measurable function
$r:X\times Y\to[0,\infty]$ such that
$$
\nu'=\kact{\mu}{r}.
$$

We claim that, for each $x$, the set
$$
E^x:=\{y\in A^x\mid 0<r(x,y)<\infty\}
$$
is $\mu(x)$-null.
If not, then by $\sigma$-finiteness of $\mu(x)$ and by decomposing $E^x$ into
level sets $\{1/m\leq r(x,-)\leq m\}$, there would be a measurable
$B\subseteq E^x$ such that
$$
0<\int_B \mu(x,\dif y)\,r(x,y)<\infty.
$$
But $B\subseteq A^x$, and $A^x$ is a $\nu(x)$-$0$-$\infty$-set; since
$\nu'\lli\nu$, the set $B$ must be a $\nu'(x)$-$0$-$\infty$-set.
This contradicts
$$
\nu'(x,B)=\int_B \mu(x,\dif y)\,r(x,y)\in(0,\infty).
$$

Define $h:X\times Y\to[0,\infty]$ pointwise by
$$
h(x,y)=
\begin{cases}
r(x,y), & f(x,y)=1,\\
1, & f(x,y)=\infty\text{ and }r(x,y)=\infty,\\
0, & \text{otherwise.}
\end{cases}
$$
Then $h$ is jointly measurable under the standard-Borel or countability
hypothesis above.
Moreover, $f(x,-)h(x,-)=r(x,-)$ $\mu(x)$-almost everywhere, by the claim and
our convention that $\infty\cdot0=0$.
Therefore
$$
\kact{\nu}{h}=\kact{\mu}{fh}=\kact{\mu}{r}=\nu',
$$
so $h$ is the required jointly measurable Radon-Nikod\'ym derivative of $\nu'$
with respect to $\nu$ under the additional hypothesis.

For a counterexample when $\nu$ is s-finite and we only require 
$\nu'\ll\nu$, take $\nu=\sum_{n\in\naturals}\uniform{\reals}$ and 
$\nu'=\mathsf{normal}(0,1)$.
In that case, $\nu'\ll\nu$.
However, for any $f:\reals\to[0,\infty]$, $A\in\Sigma_{\reals}$, we have 
$\kact{\nu}{ f}(A)\in\{0,\infty\}$, which means we can never have 
$\kact{\nu}{ f}=\nu'$.
\end{proof}

One reason the Radon-Nikod\'ym Theorem is important is that it implies the 
existence of a general \emph{importance sampling} procedure for arbitrary 
probabilistic programs (which \citep{staton2017commutative} has shown have 
semantics given by s-finite kernels).

Indeed, let $\mu\lli \nu$ be s-finite kernels $X\leadsto Y$,
where $X$ is countable and discrete or $Y$ is standard Borel.
Then, we can construct a jointly measurable RN-derivative
$\dif\mu/\dif \nu:X\times Y\to [0,\infty]$.
This has the property that $\kact{\nu}{ \dif\mu/\dif\nu} =\mu$.
In the computational terms of the model probabilistic programming language of
\citep{staton2017commutative}, this gives us the importance sampling procedure 
for $\mu$ with respect to $\nu$:
\[
\fbox{\mbox{$
\sample(\mu(x)) $
}} = \fbox{\mbox{$
\letin{y}{\sample(\nu(x))}{
\score(\frac{d\mu}{d\nu}(x,y));y}$}}
\]

It also recovers the usual \emph{rejection sampling} procedure in the
normalised case.
Indeed, suppose in addition that $\mu$ and $\nu$ are probability kernels and
that $\dif\mu/\dif\nu$ is bounded by some $M\in[1,\infty)$.
Then, we get a rejection sampling procedure for $\mu$:
\[
\fbox{\mbox{$\sample(\mu(x))$}} = 
\fbox{\mbox{$\letrecin{f}{\lambda \_.
\begin{array}{l}
\letin{y}{\sample(\nu(x))}{}\\
\letin{w}{\sample(\uniform{[0,1]})}{}\\
\ifelse{w\leq\frac{1}{M}\frac{d\mu}{d\nu}(x,y)}{y}{f()}
\end{array}
}{f()}$}}.
\]

\section{Lebesgue Decomposition}
While the Radon-Nikod\'ym theorem is a powerful tool for comparing kernels,
it does not typically apply to an arbitrary pair of kernels $k,l$, since usually
$k\lli l$ does not hold.
The Lebesgue decomposition theorem gives an analysis of the relationship
between an arbitrary pair of well-behaved kernels.
It is typically phrased for $\sigma$-finite measures and appears as \citep[Theorem
1.28]{kallenberg2017random} for $\sigma$-finite kernels.
In this section, we generalise the result to s-finite kernels.

To do this, we introduce a new relationship $k\boti l$ between kernels
$X\leadsto Y$
(read ``$k$ is \emph{$\infty$-singular} w.r.t.\ $l$'').
Write $k\boti l$ iff $k\ll l$, all $k(x)$-$0$-$\infty$-sets are trivial (i.e.\ are null
sets) for every $x\in X$, and there exists an $A\in\Sigma_{X\times Y}$ such
that $k$ is supported on $A$ and $A^x$ is an $l(x)$-$0$-$\infty$-set for every
$x\in X$.

\begin{theorem}[Lebesgue Decomposition Theorem for S-finite Kernels]
\label{thm:lebesguedecomp}
Let $k,l:X\leadsto Y$ be s-finite kernels where either $X$ is countable and discrete
or $Y$ is standard Borel.
Then, $k$ decomposes uniquely as a sum of three mutually singular components
$$
k=k_a+k_\infty+k_s,
$$
where $k_a\lli l$, $k_\infty\boti l$ and $\mutsing{k_s}{l}$. 
It then follows that $k_a,k_s$ are s-finite and $k_\infty$ is $\sigma$-finite.
\end{theorem}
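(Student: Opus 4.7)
The plan is a two-step refinement of the classical Lebesgue decomposition. First, I would isolate the mutually singular component $k_s$ by lifting Kallenberg's $\sigma$-finite Lebesgue decomposition to the s-finite setting via Theorem~\ref{thm:sfinalt}. Second, I would split the absolutely continuous remainder $k_c$ into $k_a$ and $k_\infty$ using the top $0$-$\infty$-sets supplied by Lemma~\ref{lem:kernelinftyset}.

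For the first step, I would write $l = \kact{\tilde l}{g}$ and $k = \kact{\tilde k}{f}$ via Theorem~\ref{thm:sfinalt}, where $\tilde l, \tilde k$ are $\sigma$-finite and $f, g : X \times Y \to \{1,\infty\}$ are jointly measurable thanks to the hypothesis on $X$ or $Y$. Applying the classical Lebesgue decomposition to the $\sigma$-finite $\tilde k, \tilde l$ yields $\tilde k = \tilde k_c + \tilde k_s$ with a witness set $S \in \Sigma_{X \times Y}$ on which $\tilde k_s$ is supported and off which $\tilde l$ is supported. Since $g \geq 1$, $l$ has the same null sets as $\tilde l$, so $l$ is also supported off $S$. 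Setting $k_s := \kact{k}{\chi_S}$ and $k_c := \kact{k}{\chi_{X \times Y \setminus S}}$, I obtain $k = k_c + k_s$ with $\mutsing{k_s}{l}$ (witness $S$) and $k_c \ll l$; each part is s-finite by Theorem~\ref{thm:charac-sfinite}.

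For the second step, I would take top $0$-$\infty$-sets $A := \infty[l]$ and $B := \infty[k_c]$ from Lemma~\ref{lem:kernelinftyset} and define $k_\infty := \kact{k_c}{\chi_{A \setminus B}}$ and $k_a := \kact{k_c}{\chi_{(X \times Y \setminus A) \cup (A \cap B)}}$. To verify $k_\infty \boti l$: absolute continuity descends from $k_c \ll l$; the support lies in $A = \infty[l]$, which is an $l$-$0$-$\infty$-set; and $\sigma$-finiteness follows because $k_\infty$ lives off the top $0$-$\infty$-set of $k_c$ (Theorem~\ref{thm:topinftysets}). To verify $k_a \lli l$: any $l$-$0$-$\infty$-set $V$ splits as $(V \cap (X \times Y \setminus A)) \cup (V \cap A)$; on the first piece $l$ is $\sigma$-finite, so that piece is $l$-null hence $k_a$-null, and on the second piece $k_a$ is supported inside $B = \infty[k_c]$, so any measurable sub-piece has $k_a$-measure $0$ or $\infty$. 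Mutual singularity of the three parts is immediate from the pairwise disjoint supports $S$, $A \setminus B$, and $(X \times Y \setminus A) \cup (A \cap B)$.

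The main obstacle I anticipate is a clean uniqueness statement. I would argue that any other decomposition $k = k_a' + k_\infty' + k_s'$ forces $k_s' = k_s$ by the standard uniqueness of the singular part in Lebesgue decomposition (inherited from the $\sigma$-finite case applied to $\tilde k, \tilde l$), so that $k_a' + k_\infty' = k_c$. The further split is then essentially forced: on $X \times Y \setminus \infty[l]$ there are no non-trivial $l$-$0$-$\infty$-sets, so the constraint $k_\infty' \boti l$ forces $k_\infty'$ to vanish there; on $\infty[l]$, the demand that $k_\infty'$ be $\sigma$-finite while $k_a' \lli l$ absorbs the $0$-$\infty$ content forces $k_\infty'$ to agree with $\kact{k_c}{\chi_{\infty[l] \setminus \infty[k_c]}}$ up to the almost-everywhere-$\infty$-equivalence appearing above Theorem~\ref{thm:uniquern}. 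The closing s-finiteness claims for $k_a, k_s$ and $\sigma$-finiteness of $k_\infty$ then follow directly from Theorem~\ref{thm:charac-sfinite} and Theorem~\ref{thm:topinftysets}.
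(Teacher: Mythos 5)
Your proposal is correct and follows essentially the same route as the paper's proof: reduce to $\sigma$-finite kernels via Theorem~\ref{thm:sfinalt}, apply the classical Lebesgue decomposition to peel off $k_s$, and then split the absolutely continuous remainder along $\infty[l]\setminus\infty[k]$ (your $A\setminus B$ differs from the paper's choice only by a $k_c$-null set). The uniqueness argument you sketch — singular part first, then forcing $k_\infty'$ to vanish off $\infty[l]$ and using $\sigma$-finiteness of $k_\infty'$ versus $k_a'\lli l$ on $\infty[l]$ — is the same case analysis the paper carries out in detail.
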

\begin{proof}
Note that the special case of this theorem when $k,l$ are $\sigma$-finite appears as
\citep[Theorem 1.28]{kallenberg2017random} (in this case $k_\infty=0$,
and $\lli$ and $\ll$ are equivalent because we are working
with $\sigma$-finite kernels).

By Theorem~\ref{thm:sfinalt}, we obtain measurable 
$f_k:X\times Y\to \{1,\infty\}$ and $f_l:X\times Y\to\{1,\infty\}$
and $\sigma$-finite kernels $k',l':X\leadsto Y$ such that
$k=\kact{k'}{ f_k}$
and
$l=\kact{l'}{ f_l}$.

Let us write $\infty[k]:=f_k^{-1}(\infty)$ and $\infty[l]:=f_l^{-1}(\infty)$
and observe that they define measurable subsets of $X\times Y$.

Because both $k'$ and $l'$ are $\sigma$-finite kernels,
we are in a position to apply 
\citep[Theorem 1.28]{kallenberg2017random} to $k'$ and $l'$ to obtain a $\sigma$-finite decomposition
$$
k'=k'_{a\infty}+k'_{s}
$$ 
where $k'_{a\infty}\ll l'$ (and therefore $k'_{a\infty}\lli l'$) and
$k'_{s}\bot l'$.

Define 
\begin{align*}
k_s&:= \kact{k'_s }{ f_k};\\
k'_\infty&:=\kact{k'_{a\infty}}{ \chi_{\infty[l]\setminus
\infty[k]}};\\
k_\infty&:=\kact{k'_\infty}{ f_k};\\
k'_a&:=k'_{a\infty}-k'_\infty;\\
k_a&:=\kact{k'_a}{ f_k}.
\end{align*}
Then, it follows immediately that $k=k_a+k_\infty+k_s$.
Observe that $k_a,k_s$ are s-finite by part 3 of Theorem~\ref{thm:charac-sfinite}.
Moreover, it suffices to show that $k'_a,k'_\infty$ and $k'_s$ are mutually
singular, as the former three kernels have density $f_k$ w.r.t.\ the latter three,
respectively.
Now, $k'_a,k'_\infty\ll k'_{a\infty}$ (densities $1-\chi_{\infty[l]\setminus
\infty[k]}$ and $\chi_{\infty[l]\setminus
\infty[k]}$, respectively) and $k'_{a\infty}\ll l'$ (by construction
of the Lebesgue decomposition) and $k'_s\bot l'$ (by construction of the
Lebesgue decomposition), so $k'_a,k'_\infty\bot k'_s$.
Moreover, by construction $k'_\infty$ is supported in
$\infty[l]\setminus \infty[k]$ while $k'_a$ is supported in $X\times Y\setminus
(\infty[l]\setminus \infty[k])$, so it follows that $k'_\infty\bot k'_a$.
We conclude that $k_a,k_\infty$ and $k_s$ are mutually singular.

We proceed to show that $k_s\bot l$, $k_a\lli l$ and $k_\infty\boti l$.

First, observe that $k_s\bot l$ as $k'_s\bot l'$, $k_s\ll k'_s$
(density $f_k$) and $l\ll l'$ (density $f_l$).

Second, $k_a\lli k'_a$ (density $f_k$), $k'_a\lli k'_{a\infty}$
(density $1-\chi_{\infty[l]\setminus \infty[k]}$) and $k'_{a\infty}\lli l'$
(by the construction of the Lebesgue decomposition, as observed above).
By transitivity of $\lli$, it follows that $k_a\lli l'$.
As $k_a$ is supported outside $\infty[l]\setminus \infty[k]$, by construction,
it then follows that $k_a\lli l$.

Third, $k_\infty$ is $\sigma$-finite: on its support
$\infty[l]\setminus\infty[k]$ we have $f_k=1$, so $k_\infty=k'_\infty$, and
$k'_\infty$ is a restriction of the $\sigma$-finite kernel $k'_{a\infty}$.
It immediately follows that all $k_\infty$-$0$-$\infty$-sets are trivial by
Theorem~\ref{thm:topinftysets}.
Moreover, $k_\infty\ll k'_\infty$ (density $f_k$), $k'_\infty\ll k'_{a\infty}$
(density $\chi_{\infty[l]\setminus\infty[k]}$), $k'_{a\infty}\ll l'$ (by
the construction of the Lebesgue decomposition) and $l'\ll l$ ($l$ has strictly positive
density $f_l$ w.r.t.\ $l'$).
By transitivity of $\ll$, it follows that $k_\infty\ll l$.
Finally, $k_\infty$ is supported in $\infty[l]$, by construction,
which is an $l$-$0$-$\infty$-set, so $k_\infty\boti l$.

It remains to show uniqueness of the decomposition.
For this purpose,
suppose also that $k=\kappa_a+\kappa_\infty+\kappa_s$ where $\kappa_a\lli l$,
$\kappa_s\bot l$ and $\kappa_\infty\boti l$ and
$\kappa_a,\kappa_\infty,\kappa_s$ are mutually singular.
Since these are mutually singular summands of the s-finite kernel $k$, they are
restrictions of $k$ to measurable supports, and hence are themselves s-finite.
In particular, $\kappa_\infty$ is $\sigma$-finite by
Theorem~\ref{thm:topinftysets}.

As $k_s\bot l$, we get $L^k\in \Sigma_{X\times Y}$ such that $l$ is supported in
$L^k$ and $k_s$ is supported in $X\times Y\setminus L^k$.
As $\kappa_s\bot l$, we get $L^\kappa\in \Sigma_{X\times Y}$ such that $l$ is
supported in $L^\kappa$ and $\kappa_s$ is supported in
$X\times Y\setminus L^\kappa$. 
Defining $L:=L^k\cap L^\kappa$, we get that $l$ is supported in $L$ and
$k_s$ and $\kappa_s$ are both supported in $K:=X\times Y\setminus L$.
Observe that, for $U\in \Sigma_{K^x}$, we have $l(x,U)=0$, and therefore
\begin{align*}
k_a(x,U)&=0\\
k_\infty(x,U)&=0\\
\kappa_a(x,U)&=0\\
\kappa_\infty(x,U)&=0
\end{align*}
as all these kernels are $\ll$ w.r.t.\ $l$ by assumption.
Now, by assumption
$$
k_a+k_\infty+k_s=k=\kappa_a+\kappa_\infty+\kappa_s,
$$
it follows that $k_s(x,U)=\kappa_s(x,U)$.
Furthermore, for $U\in \Sigma_{L^x}$, it is clear that $k_s(x,U)=0=\kappa_s(x,U)$, as $L$
is the support of $l$ while $k_s,\kappa_s\bot l$ by assumption.
We see that $k_s=\kappa_s$.

It remains to show that
$k_a=\kappa_a$ and $k_\infty=\kappa_\infty$ on $L$, where we already know that
$$
k_a+k_\infty= \kappa_a+\kappa_\infty,
$$
because $k_s=\kappa_s=0$.

To show that $k_a=\kappa_a$ and $k_\infty=\kappa_\infty$ on $L$, let us 
first restrict to $L\cap \infty[k]$.
Let $x\in X$ and $U\in\Sigma_{\infty[k]^x\cap L^x}$.
If $k(x,U)=0$, it follows immediately that $$k_a(x,U),\kappa_a(x,U),
k_\infty(x,U),\kappa_\infty(x,U)=0.$$
If $k(x,U)>0$, then $U$ is a non-trivial $k(x)$-$0$-$\infty$-set, as
$U\subseteq \infty[k]^x$.
If $k_\infty(x,U)>0$, then $\sigma$-finiteness of $k_\infty(x)$ and mutual
singularity of the summands would give a measurable $V\subseteq U$ with
$0<k(x,V)<\infty$, a contradiction. Therefore, $k_\infty(x,U)=0$ and,
similarly, $\kappa_\infty(x,U)=0$.
It also follows that $k_a(x,U)=\infty=\kappa_a(x,U)$.

It remains to show that $k_a=\kappa_a$ and $k_\infty=\kappa_\infty$ on
$L\setminus \infty[k]$.
Since $k$ is $\sigma$-finite there by Lemma~\ref{lem:kernelinftyset},
so are $k_a,\kappa_a,k_\infty,\kappa_\infty$, being summands of $k$.

Let us restrict our attention further to $L\cap \infty[l]\setminus \infty[k]$.
Let $x\in X$ and $U\in\Sigma_{(L\cap \infty[l]\setminus \infty[k])^x}$.
Here, we claim that $k_a(x,U)=\kappa_a(x,U)=0$.
Indeed, $k_a,\kappa_a\lli l$ by assumption, so if $k_a(x,U)>0$, then also
$l(x,U)>0$, but $U\subseteq \infty[l]^x$, so this implies that
$U$ is a non-trivial $l(x)$-$0$-$\infty$-set and hence also a $k_a(x)$-$0$-$\infty$-set,
which contradicts the fact that $k_a$ is $\sigma$-finite on
$L\setminus \infty[k]$.
We see that $k_a(x,U)=0$ and, similarly, $\kappa_a(x,U)=0$ and therefore
$k_\infty(x,U)=\kappa_\infty(x,U)$.

The final case is $k_a=\kappa_a$ and $k_\infty=\kappa_\infty$ on
$(L\setminus \infty[k])\setminus \infty[l]$.
Let $x\in X$ and $U\in\Sigma_{((L\setminus \infty[k])\setminus \infty[l])^x}$.
Then, $k_\infty(x,U)=0=\kappa_\infty(x,U)$ as $k_\infty,\kappa_\infty\boti l$
so their supports may be chosen, up to $l$-null sets, inside $\infty[l]$.
It also follows that $k_a(x,U)=\kappa_a(x,U)$.

This concludes our uniqueness proof.
\end{proof}
Now, by Theorem~\ref{thm:radon-nikodym}, we further know that $k_a$ above has a
measurable density $f:X\times Y\to [0,\infty]$ with respect to $l$.
Moreover, note that $k_\infty=0$ if $l$ is $\sigma$-finite.

\section{Disintegration}
It is a cornerstone of Bayesian inference that one can construct conditional
probability distributions: the posterior distribution arises as a particular
conditional distribution over the unobserved parameters, conditioned on the
observed parameters of the chosen statistical model.
However, conditional distributions are a notoriously subtle topic in a general
measure-theoretic setting.
A particularly clean treatment can be given using the notion of
\emph{disintegration}, which we shall treat in this section.
The so-called disintegration theorem establishes the existence of
conditional distributions in suitable circumstances.
The key result in this section will be a generalisation of this theorem to
s-finite kernels, which gives, in a sense, the precise conditions under which
Bayesian inference is possible for probabilistic programs.

\begin{definition}[Disintegration (Conditional Distributions)]
Let $\phi:X\to Y$ be a measurable function between two measurable spaces and 
let $\mu:Z\leadsto X$ and $\nu:Z\leadsto Y$.
We call a kernel $k: Z\times Y\leadsto X$ a \emph{disintegration} (or conditional
distribution) of $\mu$ with respect to $\phi$ and $\nu$ if
\begin{enumerate}
\item[$a)$] $(\delta^Z\otimes^r\nu);k=\mu$ (i.e.\ for all $z\in Z$,
$\nu(z);k(z,-)=\mu(z)$);
\item[$b)$] for all $z\in Z$, $k(z,y)$ is supported on $\phi^{-1}(y)$ for
$\nu(z)$-almost all $y$.
\end{enumerate}
\end{definition}
Note that if $k$ is a disintegration of $\mu$ with respect to $\phi$ and $\nu$,
this means in particular that $k(z,-)$ is a disintegration of $\mu(z)$
with respect to $\phi$ and $\nu(z)$ (in such a way that it is jointly
measurable in $Z$ and $Y$).

The typical case considered is where $\phi=\fst:X_1\times X_2\to X_1$,
$Z=1$, $\nu=\fst_*\mu$, $\mu$ is a probability measure, and $X_1,X_2$
are standard Borel spaces.
In this situation, the classical disintegration theorem tells us that a
disintegration $k$ always exists and may be chosen to be a probability kernel.
Here, we consider the more general case, since we shall be interested in
compositionally transforming probabilistic programs through disintegration
as discussed in \citep{shan2017exact}, which forces us to consider
disintegrations of kernels (representing open programs) and programs
that are not necessarily normalised (as subprograms might, in general, be
s-finite).

To understand the relevance of disintegration to Bayesian inference, consider
that a statistical model is generally specified as a \emph{joint}
probability measure
$\mu$ on a product space $X\times \Theta$, where $\Theta$ represents the space
of unobserved (latent) \emph{parameters} of the model and $X$ represents the
space of the observed \emph{data}.
In many cases, $\mu$ is specified as the pushforward of $\mu_1\otimes \mu_2$
along the coordinate-swap map $\Theta\times X\to X\times\Theta$, where
$\mu_1$ is a probability (or $\sigma$-finite, in the case of an improper prior)
measure on $\Theta$, usually referred to as the \emph{prior distribution},
and $\mu_2:\Theta\leadsto X$ is a probability kernel, usually referred to as the
\emph{likelihood}.
(Such a splitting of a joint distribution into a prior and a likelihood always
exists by the disintegration theorem, but is far from unique.
Ultimately, the joint distribution is what matters for most purposes.)
This lets us define the \emph{model evidence}
(also called the marginal likelihood, intuitively representing the probability
that the joint model assigns to different observations)
$\nu:=\fst_*\mu$, which is a probability
measure on the data space $X$.
The classical disintegration theorem now tells us that there is a disintegration
$x\mapsto \mu(-\mid\fst=x)$ in the form of a kernel $X\leadsto \Theta$
of $\mu$ with respect to $\fst$ and $\nu$.
We call this probability kernel a \emph{posterior distribution}.
Note that its defining property is that
$$
\nu\otimes\mu(-\mid \fst=-) = \mu = \mathsf{swap}_*(\mu_1\otimes\mu_2).
$$
This result (or one of its close cousins) is often referred to as
\emph{Bayes' theorem}.

The reader may have noted that we say \emph{a} posterior distribution above.
Indeed, disintegrations need not be unique.
As we shall see, $\mu(-\mid\fst=x)$ is unique for $\nu$-almost all $x$.
In many practical situations, $X=\reals^n$ and $\uniform{\reals^n}\ll \nu$,
which means that the posterior distribution is uniquely determined everywhere
if we assume it is a continuous kernel (rather than merely measurable).
(Indeed, sets of Lebesgue measure zero have empty interior.)
However, this is by no means always true, and posteriors
in general are not uniquely defined everywhere.

Now that the concept is sufficiently motivated, let us turn to proving the
uniqueness and existence properties of disintegrations.
First, we can easily establish the following uniqueness property of
disintegrations.
\begin{theorem}[Uniqueness of Disintegrations]
Let $\phi:X\to Y$ be a measurable function between standard Borel spaces,
let $\mu$ be a measure on $X$ and let $\nu$ be a 
measure on $Y$.
If $\nu$ is s-finite, then a disintegration of $\mu$ with respect to $\phi$ and
$\nu$ is $\nu$-almost everywhere 
$\infty$-uniquely determined.
\end{theorem}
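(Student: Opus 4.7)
My plan is to reduce the disintegration uniqueness to the almost-everywhere $\infty$-uniqueness of Radon--Nikod\'ym derivatives (Theorem~\ref{thm:uniquern}), applied pointwise in the $W \in \Sigma_X$ argument, and then to patch the resulting per-$W$ statements together using that $X$ is standard Borel.

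Let $k_1, k_2 : Y \leadsto X$ be two disintegrations. My first step is to derive, from conditions (a) and (b), the key identity
$$
\int_A \nu(\dif y)\, k_i(y, W) \;=\; \mu\bigl(W \cap \phi^{-1}(A)\bigr) \qquad (W \in \Sigma_X,\; A \in \Sigma_Y,\; i=1,2).
$$
To see this, apply (a) to the set $W \cap \phi^{-1}(A) \in \Sigma_X$ to get $\int_Y \nu(\dif y)\, k_i(y, W \cap \phi^{-1}(A)) = \mu(W \cap \phi^{-1}(A))$, and then use (b)---which concentrates $k_i(y,-)$ on $\phi^{-1}(y)$ for $\nu$-almost every $y$---to collapse $k_i(y, W \cap \phi^{-1}(A))$ to $\chi_A(y)\cdot k_i(y, W)$ almost everywhere. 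For each fixed $W$, this identity says that $k_1(-, W)$ and $k_2(-, W)$ are both densities, with respect to $\nu$, of the measure $\mu_W := \mu(W \cap \phi^{-1}(-))$ on $Y$. Since $\mu_W = \kact{\nu}{k_i(-, W)}$ with $\nu$ s-finite, Theorem~\ref{thm:charac-sfinite}(3) makes $\mu_W$ itself s-finite, so Theorem~\ref{thm:uniquern} immediately yields the $\nu$-almost-everywhere $\infty$-equality of $k_1(-, W)$ and $k_2(-, W)$.

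To upgrade this fixed-$W$ conclusion to a uniform one, I would use that $\Sigma_X$ is countably generated since $X$ is standard Borel. Fixing a countable Boolean algebra $\mathcal{A}$ generating $\Sigma_X$ and collecting the $\nu$-null exceptional sets into $N := \bigcup_{W \in \mathcal{A}} N_W$, we obtain a $\nu$-null set $N$ such that for every $y \in Y \setminus N$, $k_1(y, -)$ and $k_2(y, -)$ agree on $\mathcal{A}$ in the ordinary sense if $y \notin \infty[\nu]$, and their null ideals agree on $\mathcal{A}$ if $y \in \infty[\nu]$. The main obstacle is then extending this agreement from $\mathcal{A}$ to all of $\Sigma_X$ at each such $y$, since in the absence of a $\sigma$-finiteness hypothesis on $k_i(y,-)$ a measure need not be determined by its values on a countable generating algebra. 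To close this gap I would exploit condition (b) to reduce to the standard Borel slice $\phi^{-1}(y)$: for $y \notin \infty[\nu]$, I would combine the s-finite representation of $k_i$ from Theorem~\ref{thm:sfinalt} with $\sigma$-finiteness of $\nu$ on $Y \setminus \infty[\nu]$ to refine $N$ so that $k_i(y,-)$ becomes $\sigma$-finite on its support and then extend by a standard $\pi$-$\lambda$ argument; for $y \in \infty[\nu]$, I would observe that the collection $\mathcal{D}_y := \{W \in \Sigma_X \mid k_1(y, W) = 0 \iff k_2(y, W) = 0\}$ contains $\mathcal{A}$ and is closed under countable disjoint unions (a nonnegative sum vanishes iff every summand does), and propagate agreement to all of $\Sigma_X$, obtaining $\infty \cdot k_1(y, -) = \infty \cdot k_2(y, -)$ as required.
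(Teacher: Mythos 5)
Your reduction of disintegration uniqueness to Radon--Nikod\'ym uniqueness is a genuinely different route from the paper's: the paper writes $\nu=\kact{\nu'}{f}$ with $\nu'$ $\sigma$-finite and $f:Y\to\{1,\infty\}$ (Theorem~\ref{thm:sfinalt}), observes that $k$ is a disintegration w.r.t.\ $\nu$ iff $\kact{k}{\phi\dcompose f}$ is one w.r.t.\ $\nu'$, and then quotes the $\sigma$-finite uniqueness theorem of Pollard as a black box, so that the $\infty$-equality of the $k_i$ falls out of ordinary a.e.\ equality of the $\kact{k_i}{\phi\dcompose f}$. Your first two steps are correct: the identity $\int_A\nu(\dif y)\,k_i(y,W)=\mu(W\cap\phi^{-1}(A))$ follows from $(a)$ and $(b)$ exactly as you say, $\mu_W=\kact{\nu}{k_i(-,W)}$ is s-finite by Theorem~\ref{thm:charac-sfinite}(3), and Theorem~\ref{thm:uniquern} gives the per-$W$ conclusion.

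The gap is in the final step, passing from the countable generating algebra $\mathcal{A}$ to all of $\Sigma_X$; neither branch works as described. On $\infty[\nu]$: the class $\mathcal{D}_y$ is closed under countable disjoint unions but \emph{not} under proper differences (take $k_1(y,W_1)=k_1(y,W_2)=5$ and $k_2(y,W_1)=3$, $k_2(y,W_2)=7$ with $W_1\subseteq W_2$: both sets lie in $\mathcal{D}_y$ but $k_1(y,W_2\setminus W_1)=0\neq 4=k_2(y,W_2\setminus W_1)$), so it is not a Dynkin system and no $\pi$--$\lambda$ argument applies; indeed a class containing a generating algebra and closed under countable disjoint unions need not exhaust $\Sigma_X$ --- the class $\{A : \infty\cdot\uniform{[0,1]}(A)=0\iff(\infty\cdot\uniform{[0,1]}+\delta_0)(A)=0\}$ contains all finite unions of rational half-open intervals and all countable disjoint unions thereof, yet omits $\{0\}$. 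Off $\infty[\nu]$: there is no reason for $k_i(y,-)$ to be $\sigma$-finite, even $\nu$-a.e.\ --- Theorem~\ref{thm:sfinalt} is not available since the $k_i$ are not assumed s-finite, and even s-finiteness of a fibre measure would not give $\sigma$-finiteness. Concretely, with $\nu=\uniform{[0,1]}$ (so $\infty[\nu]=\emptyset$), $X=[0,1]^2$, $\phi=\fst$ and $\mu(W)=\int_0^1\infty\cdot\uniform{[0,1]}(W^y)\,\dif y$, every disintegration has fibres of the form $\infty\cdot\uniform{[0,1]}$, which are not $\sigma$-finite; and such measures are not determined by their values on a countable generating algebra ($\infty\cdot\uniform{[0,1]}$ and $\infty\cdot\uniform{[0,1]}+\delta_0$ agree on all finite unions of rational half-open intervals but differ on $\{0\}$). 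So both halves of the upgrade need a genuinely different argument --- for instance the paper's substitution $k\mapsto\kact{k}{\phi\dcompose f}$, which converts the $\infty$-equality claim into an instance of the cited $\sigma$-finite uniqueness theorem before any countability argument is attempted.
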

\begin{proof}
Note that the special case of this statement where $\nu$ is $\sigma$-finite
appears as \citep[Theorem F.2.6]{pollard2002user}.
We show that this implies the general statement.

Indeed, observe that by Theorem~\ref{thm:sfinalt}, $\nu=\kact{\nu'}{ f}$
where $\nu'$ is $\sigma$-finite and $f:Y\to\{1,\infty\}$ measurable.
Then $k$ is a disintegration of $\mu$ w.r.t.\ $\nu$ and $\phi$ iff
$\kact{k}{\phi \dcompose f}$ is a disintegration of $\mu$ w.r.t.\ $\nu'$ and
$\phi$; property $b)$ is unchanged, and property $a)$ follows because $k$ is
supported on fibres of $\phi$.

We can apply \citep[Theorem F.2.6]{pollard2002user} to this equivalent
criterion; translating back along the density $f$ gives the required
$\nu$-almost everywhere $\infty$-uniqueness statement.
\end{proof}
Since a disintegration of kernels is, in particular, pointwise
a disintegration of measures, this uniqueness result also applies to
disintegrations of kernels.

We now turn to the existence of disintegrations.
\begin{theorem}[Disintegration Theorem] Let $X$ and $Y$ be standard Borel 
spaces, let $Z$ be a measurable space,
let $\mu:Z\leadsto X$ and $\nu:Z\leadsto Y$ be s-finite 
kernels, and let $\phi: X\to Y$ be a measurable function such that 
$\phi_* \mu \ll \nu$ and, for all $z\in Z$,
\footnote{
Equivalently, given the separate assumption $\phi_*\mu\ll\nu$, for all
$\nu(z)$-$0$-$\infty$-sets $U$, $\phi^{-1}(U)$ is a
$\mu(z)$-$0$-$\infty$-set.
The absolute-continuity assumption is still needed to ensure that preimages of
$\nu(z)$-null sets are $\mu(z)$-null.
This condition is strictly stronger than the requirement that $\phi_*\mu\lli\nu$
(as $\phi^{-1}(U)$ may have measurable subsets that are not of the form
$\phi^{-1}(V)$).
}
$\mu(z,\phi^{-1}(\infty[\nu(z)])\setminus \infty[\mu(z)])=0$.
Then, there exists an s-finite disintegration $k:Z\times Y\leadsto X$ of $\mu$
with respect to $\phi$ and $\nu$.

$k$ can be chosen to be a probability kernel if $\mu$ is $\sigma$-finite and
$\phi_*\mu=\nu$.

This theorem may fail for s-finite $\mu,\nu$ if we only require 
$\phi_*\mu\lli\nu$.
\end{theorem}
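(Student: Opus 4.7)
The plan is to reduce to the $\sigma$-finite case by decomposing both $\mu$ and $\nu$ via Theorem~\ref{thm:sfinalt}, applying the classical disintegration theorem to the $\sigma$-finite parts, and then piecing together a disintegration of $\mu$ by handling separately the $\sigma$-finite region and the $0$-$\infty$-region of $\nu$.

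First, using Theorem~\ref{thm:sfinalt} (with joint measurability available because $X$ and $Y$ are standard Borel), I would write $\mu=\kact{\mu'}{h}$ and $\nu=\kact{\nu'}{g}$ with $\mu',\nu'$ $\sigma$-finite kernels and $h:Z\times X\to\{1,\infty\}$, $g:Z\times Y\to\{1,\infty\}$ jointly measurable. For each $z$, $\infty[\mu(z)]=h(z,-)^{-1}(\infty)$ and $\infty[\nu(z)]=g(z,-)^{-1}(\infty)$ up to null sets. Because $h,g\geq 1$, the pairs $(\mu,\mu')$ and $(\nu,\nu')$ share the same null sets, so $\phi_*\mu\ll\nu$ passes to $\phi_*\mu'\ll\nu'$. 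I would then apply the classical disintegration theorem for $\sigma$-finite kernels \citep[Theorem~1.28]{kallenberg2017random} to obtain a $\sigma$-finite disintegration $k':Z\times Y\leadsto X$ of $\mu'$ with respect to $\phi$ and $\nu'$.

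The strengthened hypothesis reads $\mu(z,\{x:g(z,\phi(x))=\infty,\,h(z,x)=1\})=0$ in this decomposition; since $\mu'\leq\mu$ the same holds for $\mu'$, and feeding this through the $k'$-disintegration yields the crucial observation that for $\nu'(z)$-almost every $y$ with $g(z,y)=\infty$, the measure $k'(z,y)$ concentrates on $\phi^{-1}(y)\cap\{h(z,-)=\infty\}$. I would then define
\[
k(z,y,A) := \begin{cases}\kact{k'(z,y)}{h(z,-)}(A) & \text{if }g(z,y)=1,\\ k'(z,y,A) & \text{if }g(z,y)=\infty.\end{cases}
\]
Joint measurability of $k(z,y,A)$ in $(z,y)$ is inherited from joint measurability of $g$ and $h$, the kernel property of $k'$, and the measurability of the partition. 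The disintegration identity $\int_Y\nu(z,\dif y)\,k(z,y,A)=\mu(z,A)$ is verified by splitting along $\{g=1\}$ and $\{g=\infty\}$: on $\{g=1\}$ we have $g\cdot k=\kact{k'(z,y)}{h(z,-)}(A)$ directly; on $\{g=\infty\}$ the concentration result gives $g(z,y)\cdot k(z,y,A)=\infty\cdot k'(z,y,A)=\kact{k'(z,y)}{h(z,-)}(A)$ for $\nu'(z)$-almost every such $y$. Reassembling yields $\int_Y\nu'(z,\dif y)\,\kact{k'(z,y)}{h(z,-)}(A)=\int_A\mu'(z,\dif x)\,h(z,x)=\mu(z,A)$ by extending the $k'$-disintegration identity from sets to nonnegative functions. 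S-finiteness of $k$ follows by writing it as the sum of its $\{g=1\}$- and $\{g=\infty\}$-restrictions, each s-finite via Theorem~\ref{thm:charac-sfinite}(3) (from the s-finite $\kact{k'}{h}$ and the $\sigma$-finite $k'$); the support condition transfers from $k'$ since $\nu$ and $\nu'$ share null sets. For the probability-kernel refinement, when $\mu$ is $\sigma$-finite and $\phi_*\mu=\nu$ one may take $h\equiv 1$, $g\equiv 1$, and then $\int_A\nu(\dif y)\,k(y,X)=\mu(\phi^{-1}(A))=\nu(A)$ forces $k(y,X)=1$ for $\nu$-almost every $y$.

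For the concluding failure claim I would take $Z=1$, $X=\reals$, $Y=\{0\}$ with $\phi:\reals\to\{0\}$ the constant map, $\mu=\lebesgue$, and $\nu=\infty\cdot\delta_0$. Then $\phi_*\mu=\infty\cdot\delta_0=\nu$, so $\phi_*\mu\lli\nu$ trivially, yet the hypothesis fails since $\mu(\phi^{-1}(\infty[\nu])\setminus\infty[\mu])=\mu(\reals)=\infty$; moreover any candidate disintegration $k:\{0\}\leadsto\reals$ would have to satisfy $\infty\cdot k(0,A)=\lebesgue(A)$ for every Borel $A\subseteq\reals$, which is impossible because the left-hand side lies in $\{0,\infty\}$ while $\lebesgue$ takes intermediate values. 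The main obstacle I anticipate is precisely this treatment of the $\{g=\infty\}$ region: without the strengthened hypothesis, the concentration of $k'(z,y)$ on $\{h=\infty\}$ there can fail, and with it the pointwise identity $g\cdot k=\kact{k'}{h}$ needed to close the computation.
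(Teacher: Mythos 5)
Your proposal is correct and follows the same overall strategy as the paper: decompose $\mu=\kact{\mu'}{h}$ and $\nu=\kact{\nu'}{g}$ via Theorem~\ref{thm:sfinalt}, disintegrate the $\sigma$-finite parts, and use the strengthened hypothesis to reassemble a disintegration of $\mu$ w.r.t.\ $\nu$. Two remarks on where you diverge. First, you define $k$ piecewise ($\kact{k'}{h(z,-)}$ on $\{g=1\}$, plain $k'$ on $\{g=\infty\}$) and justify the $\{g=\infty\}$ region by showing that $k'(z,y)$ concentrates on $\phi^{-1}(y)\cap\{h(z,-)=\infty\}$ for $\nu'(z)$-almost every such $y$; the paper instead takes $k=\kact{k'}{h}$ uniformly and encodes the same hypothesis as the pointwise inequality $g(z,\phi(x))\leq h(z,x)$ (after adjusting the densities), so that $g(z,\phi(x))\cdot h(z,x)=h(z,x)$ on $\{1,\infty\}$. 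The two constructions agree up to the almost-everywhere $\infty$-uniqueness of disintegrations, and your concentration argument (push the $\mu'$-null set $\{g\circ(\id[Z]\times\phi)=\infty,\,h=1\}$ through the $k'$-disintegration and intersect with the support condition $b)$) is sound. Second, the ``classical disintegration theorem'' you invoke for $\sigma$-finite kernels with general $\phi$ and $\phi_*\mu'\ll\nu'$ is not quite off the shelf as cited: \citep[Theorem 1.28]{kallenberg2017random} is the Radon--Nikod\'ym/Lebesgue statement, and the actual disintegration result \citep[Theorem 1.25(i)]{kallenberg2017random} covers only $\phi=\fst$ with $\nu=\fst_*\mu$; the paper spends its Steps 1 and 2 deriving your starting point from it, via a Radon--Nikod\'ym density of $\phi_*\mu'$ with respect to $\nu'$ and the graph embedding $\langle\phi,\id[X]\rangle$. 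These reductions are standard, so this is a presentational compression rather than a mathematical gap. Your counterexample for the final failure claim is the paper's ($\mu=\uniform{\reals}$ collapsed to a point, $\nu=\infty\cdot\delta$) in different clothing.
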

\begin{proof}
Step 0. We use the standard disintegration theorem for $\sigma$-finite
kernels: if $X=Y\times X'$, $\phi=\fst$, $\mu:Z\leadsto X$ and
$\nu:Z\leadsto Y$ are $\sigma$-finite kernels, and $\fst_*\mu\ll\nu$, then
there exists a $\sigma$-finite disintegration $k$ of $\mu$ w.r.t.\ $\nu$ and
$\fst$.
Indeed, decompose $\mu$ into mutually singular finite kernels, apply the usual
finite regular-conditional-probability theorem to each piece, weight the
resulting conditionals by the Radon-Nikod\'ym densities of their marginals with
respect to $\nu$, and sum the resulting mutually singular finite kernels.
The proof proceeds by gradually generalising this result in two further steps.
The first of these is standard and is used for the disintegration theorem for
$\sigma$-finite measures in \cite[Appendix F]{pollard2002user}.
The final step is new.

Step 1. Thus the theorem holds in the case where $X=Y\times X'$, $\phi=\fst$,
and both $\mu:Z\leadsto X$ and $\nu:Z\leadsto Y$ are $\sigma$-finite kernels.

Step 2. We generalise this further to the case where $X$ is an arbitrary
standard Borel space and $\phi$ is an arbitrary measurable function
$X\to Y$.
Given this setup, observe that $\langle\phi,\id[X]\rangle_*\mu$ defines a
$\sigma$-finite kernel $Z\leadsto Y\times X$ (as $\langle \phi,\id[X]\rangle$ is
a measurable embedding) which concentrates on
$\{\langle \phi(x),x\rangle \mid x\in X\}$.
Moreover, $\fst_*(\langle\phi,\id[X]\rangle_*\mu)= \phi_*\mu\lli \nu$.
Therefore, we can obtain a disintegration $k$ of $\langle\phi,\id[X]\rangle_*\mu$
with respect to $\nu$ and $\fst$, using the disintegration theorem obtained in
Step 1. 
Then, observe that $\snd_*k$ is a disintegration of $\mu$ with respect to
$\nu$ and $\phi$, as $\nu(z);\snd_*k(z,-)=\snd_*(\nu(z);k(z,-))=
\snd_*(\langle\phi,\id[X]\rangle_*\mu(z))=\mu(z)$.
Moreover, $\snd_*k$ is s-finite as $k$ is $\sigma$-finite.

Step 3. We generalise this further to the general case where $\mu$
and $\nu$ are allowed to be s-finite kernels.
Consider this general case.
Apply Theorem~\ref{thm:sfinalt} to obtain $\sigma$-finite kernels
$\mu':Z\leadsto X$ and $\nu':Z\leadsto Y$ as well as measurable functions
$f_\mu:Z\times X\to \{1,\infty\}$ and $f_\nu:Z\times Y\to\{1,\infty\}$ such that
$\mu=\kact{\mu'}{ f_\mu}$ and $\nu=\kact{\nu'}{ f_\nu}$.
Observe that $\phi_*\mu'\ll\nu'$ as $\mu$ and $\nu$ have non-zero densities
w.r.t.\ $\mu'$ and $\nu'$, while $\phi_*\mu\ll \nu$ by assumption.
Moreover, vacuously,
$\mu'(z,\phi^{-1}(\infty[\nu'(z)])\setminus \infty[\mu'(z)])=0$, as both
$\mu'$ and $\nu'$ are $\sigma$-finite.
Therefore, we can apply the disintegration theorem obtained in Step 2 to obtain
an s-finite disintegration $k$ of $\mu'$ with respect to $\nu'$ and $\phi$.
We now claim that $\kact{k}{ f_\mu}$ (which is an s-finite kernel by
part 3 of Theorem~\ref{thm:charac-sfinite})
is a disintegration of $\mu$ with respect to $\nu$ and $\phi$.
It is immediate that $\kact{k}{ f_\mu}$ inherits property $b)$ of a
disintegration from $k$.
It remains to demonstrate property $a)$.
The crucial observation in the proof is that our assumption that 
$$\mu(z,\phi^{-1}(\infty[\nu(z)])\setminus \infty[\mu(z)])=0$$
implies that we can choose $f_\mu$ and $f_\nu$ such that
$$
(\id[Z]\times \phi) ; f_\nu \leq f_\mu \qquad(*). 
$$
Indeed, choose the representing functions so that their $\infty$-loci are top
$0$-$\infty$-sets for $\nu$ and $\mu$, respectively. The possible failure of
$(*)$ is then contained in
$(\id[Z]\times\phi)^{-1}(\infty[\nu])\setminus\infty[\mu]$, which is
$\mu(z)$-null for every $z$. On this set $f_\mu=1$, so it is also
$\mu'(z)$-null; changing $f_\mu$ from $1$ to $\infty$ there does not alter
$\mu=\kact{\mu'}{f_\mu}$ and gives $(*)$.
Noting this, we can compute, for $z\in Z$ and $U\in\Sigma_X$:
{\everymath{\displaystyle}
\begin{calculation}
(\nu(z) \dcompose \kact{k(z,-) }{ f_\mu(z,-)})(U)
\step{decomposition $\nu$, definition $\nu',f_\nu$}
(\kact{\nu'(z)}{ f_\nu(z,-)}
\dcompose
\kact{k(z,-)}{ f_\mu(z,-)}
)(U)
\step{definition of kernel composition and action on kernels $\kact{-}{-}$}
\int_Y\nu'(z,\dif y)f_\nu(z,y)\int_U k(z,y,\dif x)f_\mu(z,x)
\step{linearity of integration}
\int_Y\nu'(z,\dif y)\int_U k(z,y,\dif x)f_\nu(z,y)\cdot f_\mu(z,x)
\step{property $b)$, disintegration $k$}
\int_Y\nu'(z,\dif y)\int_U k(z,y,\dif x)f_\nu(z,\phi(x))\cdot f_\mu(z,x)
\step{by observation $(*)$ above and the fact that $f_\mu,f_\nu\in\{1,\infty\}$}
\int_Y\nu'(z,\dif y)\int_U k(z,y,\dif x) f_\mu(z,x)
\step{definition of kernel composition and action on kernels $\kact{-}{-}$}
\Big(\nu'(z) \dcompose \big( \kact{k(z,-)}{ f_\mu(z,-)} \big) \Big)(U)
\step{linearity of integration}
\big(
\kact{(\nu'(z) \dcompose (k(z,-)))}{ f_\mu(z,-)}
\big)(U)
\step{property $a)$, disintegration $k$}
\kact{\mu'(z)}{ f_\mu(z,-)}(U)
\step{decomposition $\mu$, definition $\mu',f_\mu$}
\mu(z,U)
\end{calculation}}
This establishes property $a)$ for the disintegration $\kact{k}{ f_\mu}$.

For the probability-kernel statement, first note that in the stated case
$\nu=\phi_*\mu$ is $\sigma$-finite. Indeed, since $\mu$ is $\sigma$-finite,
$\infty[\mu(z)]$ is $\mu(z)$-null, and the displayed assumption gives
$\mu(z,\phi^{-1}(\infty[\nu(z)]))=0$ for all $z$. Hence
$\nu(z,\infty[\nu(z)])=0$, so $\nu$ is $\sigma$-finite by
Theorem~\ref{thm:topinftysets}.
The standard probability-kernel form of the $\sigma$-finite disintegration theorem
therefore applies; for $Z=1$ this appears as
\citep[Exercise 5.3]{pollard2002user}, and the parameterised version is the
standard kernel form used in Step 0.

To see that the theorem may fail for s-finite kernels if we only require 
$\phi_*\mu\lli \nu$, take $Z=1$, $X=\reals$, $Y=1$, $\mu=\uniform{\reals}$.
Then, $\phi_*\mu=\infty\cdot \delta_*$.
Take $\nu=\phi_*\mu$.
Then, there is no disintegration, since for any candidate conditional measure
$$
1=\mu([0,1])\neq \int_1(\phi_*\mu)(\dif x)\mu([0,1]\mid \phi=x)\in\{0,\infty\}.
$$
\end{proof}

\begin{remark}[Radon-Nikod\'ym Derivatives as Disintegrations]
Observe that Radon-Nikod\'ym derivatives arise as a special case of
disintegrations where $\phi=\id[X]$.
Indeed, this gives a kernel $k$ such that $k(x)$ is supported in $\{x\}$,
hence $k$ is merely a function $Z\times X\to [0,\infty]$.
\end{remark}

\section{Randomising S-finite Kernels}
In this section, we show how all s-finite kernels can be constructed from 
simple building blocks.

First, recall a pivotal result from measure theory, the 
so-called Randomisation Lemma:
every probability kernel is constructible as a pushforward of the uniform 
probability measure on $[0,1]$.

\begin{lemma}[Randomisation {\citep[Lemma 2.22]{kallenberg2006foundations}}]
\label{lem:randomisation}
Let $\sigma$ be a probability kernel from a measurable space $S$ to a 
standard Borel space $T$. 
Then, there exists a measurable function $\dett(\sigma) : S \times [0, 1] \to 
T$ such that $\dett(\sigma)(s, -)_*(\uniform{[0,1]})= 
\sigma(s)$, for all $s\in S$,
where we write $\uniform{[0,1]}$ for the uniform distribution on $[0,1]$. 
Conversely, every kernel obtained this way is a probability kernel.
\end{lemma}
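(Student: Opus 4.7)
My plan is to reduce to the case $T = \reals$ via Kuratowski's classification theorem, and then construct $\dett(\sigma)$ using the standard generalised inverse CDF technique.

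First I would invoke Kuratowski's classification to fix a measurable embedding $\iota : T \to \reals$ together with a measurable retraction $\pi : \reals \to T$ satisfying $\pi \circ \iota = \id[T]$. Then $\iota_*\sigma$ is a probability kernel from $S$ to $\reals$, and if I can build a measurable $g : S \times [0,1] \to \reals$ with $g(s,-)_*(\uniform{[0,1]}) = \iota_*\sigma(s)$ for each $s$, then setting $\dett(\sigma) := \pi \circ g$ will do, since $(\pi \circ g)(s,-)_*(\uniform{[0,1]}) = \pi_*(\iota_*\sigma(s)) = (\pi \circ \iota)_* \sigma(s) = \sigma(s)$. This reduces the problem to $T = \reals$.

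Next I would define the ``conditional CDF'' $F : S \times \reals \to [0,1]$ by $F(s,t) := \sigma(s)((-\infty,t])$. For fixed $s$, this is right-continuous and non-decreasing in $t$; measurability in $s$ for fixed $t$ is immediate from the kernel property. Joint measurability of $F$ then follows by approximating the intervals $(-\infty, t]$ from above by intervals with rational endpoints (using right-continuity) and using that kernels are measurable in their first argument for each fixed measurable set. I would then define the generalised inverse
\[
g(s,u) := \inf\{t \in \reals : F(s,t) \geq u\},
\]
and verify joint measurability via the identity $\{(s,u) : g(s,u) \leq t\} = \{(s,u) : F(s,t) \geq u\}$, valid by right-continuity of $F(s,-)$. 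For each fixed $s$, the classical fact that the generalised inverse of a CDF pushes $\uniform{[0,1]}$ forward to the probability measure with that CDF gives $g(s,-)_*(\uniform{[0,1]}) = \sigma(s)$ by uniqueness of measures with a given CDF.

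The converse is routine: given any measurable $h : S \times [0,1] \to T$, the assignment $(s,U) \mapsto \uniform{[0,1]}(\{u : h(s,u) \in U\})$ is a probability measure in $U$ (as a pushforward of a probability measure) and is measurable in $s$ by the usual monotone class/simple-function approximation argument applied to $\chi_U \circ h$. The main obstacle I expect is establishing joint measurability of $g$ cleanly; this is where right-continuity of $F(s,-)$ is essential, since otherwise the sub-level sets of $g$ would not coincide with those of $F$. The Kuratowski reduction is bookkeeping, and the pointwise inverse-CDF computation is a textbook result, so once joint measurability is handled all the pieces fit together.
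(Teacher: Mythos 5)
The paper does not actually prove this lemma --- it imports it verbatim from Kallenberg --- and your argument is essentially the standard proof of that cited result: retract $T$ into $\reals$ via Kuratowski's theorem, then apply the jointly measurable quantile transform, with the retraction $\pi$ conveniently absorbing the fact that the generalised inverse need only land in $\iota(T)$ almost surely. The one point to patch is that $g(s,u)=\inf\{t:F(s,t)\ge u\}$ equals $-\infty$ at $u=0$ (and possibly $+\infty$ at $u=1$), so you must redefine $g$ on the $\uniform{[0,1]}$-null set $S\times\{0,1\}$ to take values in $\reals$ before composing with $\pi$; this does not affect the pushforward identity.
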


We can extend this result to subprobability kernels from $S$ to $T$ by noting 
that they are the same as probability kernels from $S$ to $T+1$.
Therefore, we get the following generalisation of the randomisation lemma.
\begin{lemma}[Randomisation for Subprobability Kernels]
\label{lem:subrandomisation}
Let $\sigma$ be a subprobability kernel from a measurable space $S$ to a 
standard Borel space $T$.
Then, there exists a measurable partial function (a.k.a.~a deterministic 
kernel) $\dett(\sigma) : S \times [0, 1] \rightharpoonup T$ such that $
\dett(\sigma)(s,-)_*\uniform{[0,1]} = \sigma(s)$, for all $s\in S$,
where we write $\uniform{[0,1]}$ for the uniform distribution on $[0,1]$.
Moreover, we can choose $\dett(\sigma)$ such that $\dett(\sigma)(s,p)=\bot$ 
whenever $\sigma(s)=0$.
Conversely, any kernel obtained this way is a subprobability kernel.
\end{lemma}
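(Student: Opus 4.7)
The plan is to reduce to the ordinary randomisation lemma (Lemma~\ref{lem:randomisation}) via the standard trick of turning a subprobability kernel into a probability kernel by collecting the missing mass on an added point $\bot$.

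First I would construct a probability kernel $\widetilde\sigma$ from $S$ to the standard Borel space $T+1$ (with $+1$ denoting a fresh point $\bot$) by setting
\[
\widetilde\sigma(s)(U) \;:=\; \sigma(s)(U\cap T) \;+\; \bigl(1-\sigma(s)(T)\bigr)\cdot [\bot\in U].
\]
Measurability of $s\mapsto\widetilde\sigma(s)(U)$ in $s$ for each $U\in\Sigma_{T+1}$ follows from measurability of $s\mapsto\sigma(s)(U\cap T)$ (including the case $U\cap T=T$, which gives measurability of the ``missing mass'' $1-\sigma(s)(T)$); each $\widetilde\sigma(s)$ is a probability measure on $T+1$ by construction. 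Applying Lemma~\ref{lem:randomisation} to $\widetilde\sigma$ yields a measurable function $d:S\times[0,1]\to T+1$ with $d(s,-)_*\uniform{[0,1]}=\widetilde\sigma(s)$ for every $s\in S$.

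Next I would convert $d$ into the desired measurable partial function. Let $N:=\{s\in S\mid \sigma(s)(T)=0\}$, which is measurable because $s\mapsto\sigma(s)(T)$ is measurable. Define
\[
\dett(\sigma)(s,p) \;:=\;
\begin{cases}
d(s,p), & s\notin N \text{ and } d(s,p)\in T,\\
\bot, & \text{otherwise.}
\end{cases}
\]
Viewed as a map $S\times[0,1]\to T+\{\bot\}$, this is measurable since it is obtained by modifying the measurable map $d$ on the measurable set $N\times[0,1]$, replacing values there by the constant $\bot$. For $s\notin N$ and $U\in\Sigma_T$,
\[
\dett(\sigma)(s,-)_*\uniform{[0,1]}(U) \;=\; \uniform{[0,1]}\bigl(\{p\mid d(s,p)\in U\}\bigr) \;=\; \widetilde\sigma(s)(U) \;=\; \sigma(s)(U),
\]
and for $s\in N$ both sides are $0$. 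The additional clause $\dett(\sigma)(s,p)=\bot$ whenever $\sigma(s)=0$ is built into the definition.

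For the converse, suppose $f:S\times[0,1]\rightharpoonup T$ is a measurable partial function. The identity $f(s,-)_*\uniform{[0,1]}(U)=\uniform{[0,1]}(\{p\mid f(s,p)\in U\})$ exhibits $s\mapsto f(s,-)_*\uniform{[0,1]}(U)$ as a pushforward along a measurable function in $(s,p)$, hence it is measurable in $s$ by Fubini-type reasoning on indicator functions (equivalently by the measurability half of kernel composition, since $f$ corresponds to a deterministic kernel and $\uniform{[0,1]}$ is a probability measure). The total mass is $\uniform{[0,1]}(\{p\mid f(s,p)\in T\})\le 1$, so the resulting kernel is a subprobability kernel.

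The only mildly subtle point is ensuring that the boundary clause ``$\dett(\sigma)(s,p)=\bot$ whenever $\sigma(s)=0$'' can be enforced pointwise rather than merely almost everywhere: the output of Lemma~\ref{lem:randomisation} applied to $\widetilde\sigma$ only guarantees $d(s,p)=\bot$ for $\uniform{[0,1]}$-almost every $p$ when $s\in N$, so brute-force redefinition on the measurable set $N\times[0,1]$ is required. Because this modification occurs on a set of $\sigma(s)$-measure zero (indeed $\sigma(s)$ itself is zero there), it does not disturb the pushforward identity, which is the main consistency check in the argument.
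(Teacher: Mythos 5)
Your proof is correct and follows exactly the route the paper indicates (the paper only sketches it in the sentence preceding the lemma): view the subprobability kernel as a probability kernel into $T+1$, apply Lemma~\ref{lem:randomisation}, and reinterpret the $\bot$-valued outputs as undefinedness of a partial function. Your explicit handling of the ``moreover'' clause by redefining on the measurable set $N\times[0,1]$ is a detail the paper omits, and it is the right fix since the randomisation lemma only controls $d(s,-)$ up to $\uniform{[0,1]}$-null sets.
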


Finally, we note that, by Theorem~\ref{thm:charac-sfinite}, we can write any 
s-finite kernel $\sigma$ as a countable sum of subprobability kernels 
$\{\sigma_n\}_{n\in\naturals}$:
$$
\sigma = \sum_{n\in\naturals} \sigma_n = \int_\naturals\counting_\naturals(\dif n) \, 
\sigma_n.
$$
This gives us the following.
\begin{theorem}[Randomisation Lemma for S-finite Kernels] 
\label{thm:defsfinite}
Let $\sigma$ be an s-finite kernel from a measurable space $S$ to a standard 
Borel space $T$. 
Then, there exists a measurable partial function (deterministic kernel) 
$\dett(\sigma): S\times \naturals\times [0,1]\rightharpoonup T$ such that
\[
\sigma(s)=
{\dett(\sigma)(s,-)_*(\counting_\naturals\otimes 
\uniform{[0,1]}),}
\]  
for all $s\in S$.
Conversely, any kernel obtained this way is s-finite.
\end{theorem}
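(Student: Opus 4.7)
The plan is to reduce the statement to the subprobability version (Lemma \ref{lem:subrandomisation}) via the characterisation of s-finite kernels as countable sums of subprobability kernels (Theorem \ref{thm:charac-sfinite}, point 1), then glue the resulting deterministic kernels together along the discrete parameter $n\in\naturals$.

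First I would invoke Theorem \ref{thm:charac-sfinite}(1) to write $\sigma = \sum_{n\in\naturals}\sigma_n$ for subprobability kernels $\sigma_n : S\leadsto T$. Since $T$ is standard Borel, Lemma \ref{lem:subrandomisation} applies to each $\sigma_n$, yielding a measurable partial function $\dett(\sigma_n) : S\times[0,1]\rightharpoonup T$ with $\dett(\sigma_n)(s,-)_*\uniform{[0,1]} = \sigma_n(s)$ for every $s\in S$. Next, I would define
\[
\dett(\sigma)(s,n,p) := \dett(\sigma_n)(s,p),
\]
viewed as a partial function $S\times\naturals\times[0,1]\rightharpoonup T$. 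Because $\naturals$ is countable and discrete, the measurable space $S\times\naturals\times[0,1]$ is (canonically isomorphic to) the coproduct $\coprod_{n\in\naturals}(S\times[0,1])$, so a function out of it is measurable iff each of its restrictions $\dett(\sigma_n)$ is; this establishes measurability of $\dett(\sigma)$ without any real work.

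To verify the defining pushforward identity, I would compute, for $s\in S$ and $U\in\Sigma_T$,
\[
\dett(\sigma)(s,-)_*(\counting_\naturals\otimes\uniform{[0,1]})(U)
= \int_\naturals \counting_\naturals(\dif n)\int_{[0,1]}\uniform{[0,1]}(\dif p)\,\chi_U(\dett(\sigma_n)(s,p)),
\]
where the use of iterated integration is justified by the limited Fubini theorem (Theorem \ref{thm:fubini}) since both $\counting_\naturals$ and $\uniform{[0,1]}$ are s-finite. The inner integral equals $\sigma_n(s)(U)$ by construction of $\dett(\sigma_n)$, and summing over $n$ (which is what the outer integral against $\counting_\naturals$ amounts to) gives $\sum_{n\in\naturals}\sigma_n(s)(U) = \sigma(s)(U)$, as required.

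For the converse, note that $\counting_\naturals\otimes\uniform{[0,1]} = \sum_{n\in\naturals}(\delta_n\otimes\uniform{[0,1]})$ is a countable sum of probability measures, hence s-finite; pushing it forward along the measurable partial function $\dett(\sigma)$ yields an s-finite kernel by Theorem \ref{thm:sfincomp} (closure of s-finite kernels under pushforwards along measurable partial functions). The main obstacle I anticipate is purely bookkeeping, namely checking that gluing the $\dett(\sigma_n)$ along the $\naturals$-coordinate produces a jointly measurable partial function; this is painless here because $\naturals$ is discrete, but it is the one place where the standard-Borel hypothesis on $T$ (used to apply Lemma \ref{lem:subrandomisation} to each $\sigma_n$) is really needed.
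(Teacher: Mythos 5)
Your proposal is correct and follows essentially the same route as the paper: decompose $\sigma$ into a countable sum of subprobability kernels via Theorem~\ref{thm:charac-sfinite}(1), apply the subprobability randomisation lemma, and glue along the discrete $\naturals$-coordinate. The only (cosmetic) difference is that the paper applies Lemma~\ref{lem:subrandomisation} once to the single subprobability kernel $\sigma'((s,n),U):=\sigma_n(s,U)$ from $S\times\naturals$ to $T$, whereas you apply it to each $\sigma_n$ separately and then assemble the pieces over the coproduct $\coprod_{n}(S\times[0,1])$ --- the two are interchangeable since $\naturals$ is countable and discrete.
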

\begin{proof} 
Assume $\sigma = \sum_n \sigma_n$ for subprobability kernels $\sigma_n$. 
It is straightforward to see that the map $((s,n), U) \mapsto \sigma_n(s, 
U)$---call it $\sigma'$---is a kernel from $S\times\naturals$ to $T$, 
where $\naturals$ is equipped with the discrete $\sigma$-algebra, and 
$ S\times \naturals$ is equipped with the $\sigma$-algebra generated by the 
measurable rectangles.
Moreover, because each $\sigma_n$ is a subprobability kernel, so is $\sigma'$.

By the Randomisation Lemma for Subprobability Kernels, there exists a measurable
partial function $\dett(\sigma') : ( S\times \naturals ) \times [0, 1] \rightharpoonup T$ such 
that for each $n, s$ and $U$
\[
\sigma'((s,n), U) = \dett(\sigma') ((s,n), -)_* \uniform{[0, 1]} (U) = 
\uniform{[0,1]} \big(
\set{r \mid \dett(\sigma')((s,n), r) \in U}
\big)
\]
Since $\sigma \, (s, U) = \sum_n \sigma' ((s,n), U)$, we have
\[
\sigma \, (s, U)
= \counting_{\naturals} 
\otimes \uniform{[0, 1]} \big(
\set{(n, r) \mid \dett(\sigma') ((s, n), r) \in U}
\big)
\] 
For the converse, we merely have to note that $\counting_\naturals$ is 
s-finite, as are deterministic kernels, and that s-finite kernels are closed 
under composition.
\end{proof}

We have obtained a characterisation of s-finite kernels as precisely 
the class obtained by closing under kernel composition:
\begin{itemize}
\item deterministic kernels (measurable partial functions);
\item the Lebesgue measure $\uniform{[0,1]}$ on $[0,1]$;
\item the counting measure $\counting_\naturals$ on $\naturals$.
\end{itemize}
In fact, we can replace the measure space $([0,1],\uniform{[0,1]})$ with
$([0,1),\uniform{[0,1)})$ in the above (since $\{1\}$ has measure $0$),
and we can note that the measure space $(\naturals\times 
[0,1),\counting_\naturals\otimes \uniform{[0,1)})$ is isomorphic to 
$([0,\infty),\uniform{[0,\infty)})$ by the map $(n,p)\mapsto n+p$ (whose 
inverse is given by the pair $\pair{\floor}{\id{}-\floor}$).
This gives us the following.
\begin{theorem}[Randomisation Lemma 2 for S-finite Kernels] 
\label{thm:defsfinite2}
Let $\sigma$ be an s-finite kernel from a measurable space $S$ to a standard 
Borel space $T$.
Then, there exists a measurable partial function 
(deterministic kernel) $\dett(\sigma): S\times [0,\infty)\rightharpoonup T$ 
such that
$$\sigma(s)=\dett(\sigma)(s,-)_*( \uniform{[0,\infty)}),$$
for all $s\in S$.  
Conversely, any kernel obtained this way is s-finite.
\end{theorem}
We have obtained a characterisation of s-finite kernels as precisely 
the class obtained by closing under kernel composition:
\begin{itemize}
\item deterministic kernels (measurable partial functions);
\item the Lebesgue measure $\uniform{[0,\infty)}$ on $[0,\infty)$.
\end{itemize}

In fact, there are many equivalent ways of rephrasing the above lemma.
One more useful variation is obtained by noting that
$(\reals,\uniform{\reals})$
and $([0,\infty),\uniform{[0,\infty)})$ are isomorphic measure spaces, with the
isomorphism given by $[r\in [-n-1, -n)\mapsto r-3\cdot\floor(r)-1,
r\in [n, n+1)\mapsto r + \floor(r)]_{n\in \naturals}$ and its inverse.
\begin{theorem}[Randomisation Lemma 3 for S-finite Kernels] 
\label{thm:defsfinite3}
Let $\sigma$ be an s-finite kernel from a measurable space $S$ to a standard 
Borel space $T$.
Then, there exists a measurable partial function 
(deterministic kernel) $\dett(\sigma): S\times \reals\rightharpoonup T$ 
such that
$$\sigma(s)=\dett(\sigma)(s,-)_*( \uniform{\reals}),$$
for all $s\in S$.  
Conversely, any kernel obtained this way is s-finite.
\end{theorem}
Clearly, $\dett(\sigma)$ cannot be chosen to be total, in general.
For instance, that would imply that $\sigma(x,Y)$ is constant in $x$.
However, in some specific cases, it is possible.
\begin{theorem}[Total Randomisation]
Let $\sigma$ be an s-finite measure on a standard 
Borel space $T$.
Then, there exists a (total) measurable function 
$\dett(\sigma): \reals\to T$
such that
$$\sigma= \dett(\sigma)_*( \uniform{\reals})$$
iff $\sigma(T)=\infty$.  
\end{theorem}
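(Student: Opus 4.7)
The plan is to handle the two directions separately. The ``only if'' direction is immediate: if $\dett(\sigma): \reals \to T$ is total and measurable with $\sigma = \dett(\sigma)_*\uniform{\reals}$, then $\sigma(T) = \uniform{\reals}(\dett(\sigma)^{-1}(T)) = \uniform{\reals}(\reals) = \infty$.

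For the ``if'' direction, the strategy is to take the partial function supplied by Theorem~\ref{thm:defsfinite3} and reparameterize its domain to all of $\reals$ in a measure-preserving way. First I would invoke Theorem~\ref{thm:defsfinite3} to obtain a measurable partial function $g: \reals \rightharpoonup T$ with $g_*\uniform{\reals} = \sigma$, and set $D := \dom(g) \in \Sigma_\reals$; then $\uniform{\reals}(D) = g_*\uniform{\reals}(T) = \sigma(T) = \infty$. I then aim to build a measurable map $\psi: \reals \to D$ satisfying $\psi_*\uniform{\reals} = \uniform{\reals}|_D$, so that the composite $\dett(\sigma) := g \circ \psi: \reals \to T$ is total, measurable, and satisfies $\dett(\sigma)_*\uniform{\reals} = g_*\psi_*\uniform{\reals} = g_*(\uniform{\reals}|_D) = \sigma$.

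To construct $\psi$, I would partition both $\reals$ and $D$ into countably many Borel pieces of unit Lebesgue measure and match them up. Partitioning $\reals$ into unit intervals $\{I_n\}_{n\in\naturals}$ is immediate. For $D$, use the continuous function $t \mapsto \uniform{\reals}(D \cap [-t, t])$, which starts at $0$ and tends to $\uniform{\reals}(D) = \infty$ as $t \to \infty$; by the intermediate value theorem pick $0 = t_0 < t_1 < \cdots$ with $\uniform{\reals}(D \cap [-t_n, t_n]) = n$, and set $D_n := D \cap ([-t_n, t_n] \setminus [-t_{n-1}, t_{n-1}])$. Each $(I_n, \uniform{\reals}|_{I_n})$ and $(D_n, \uniform{\reals}|_{D_n})$ is then an atomless standard Borel probability space, hence measure-theoretically isomorphic by the classical isomorphism theorem, via a measurable $\psi_n: I_n \to D_n$ with $(\psi_n)_*(\uniform{\reals}|_{I_n}) = \uniform{\reals}|_{D_n}$; gluing these yields $\psi$.

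The main obstacle I anticipate is the classical isomorphism theorem for atomless standard Borel probability spaces invoked for each $\psi_n$ \citep{kechris2012classical}. Its conclusion holds only up to null sets, so the proof must patch $\psi_n$ arbitrarily on a null exceptional subset of $I_n$ (which does not affect the pushforward) before gluing. The remaining ingredients---continuity of Lebesgue measure on nested Borel sets and assembly of countably many measurable maps along a Borel partition---are routine.
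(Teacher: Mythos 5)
Your proof is correct and follows essentially the same route as the paper's: both obtain a partial randomisation from Theorem~\ref{thm:defsfinite3}, slice its domain $D$ into unit-Lebesgue-measure pieces via the cumulative function $t\mapsto \uniform{\reals}(D\cap[-t,t])$, and match these with a countable family of unit intervals covering $\reals$ to reparameterise. The only cosmetic difference is that the paper produces the piecewise measure-preserving maps from the Randomisation Lemma~\ref{lem:randomisation} and totalises the partial function by sending undefined points to a fixed $t_0\in T$, whereas you invoke the atomless-isomorphism theorem and patch on null sets so that $\psi$ lands in $D$ exactly; both are sound.
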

\begin{proof}
Note that $\dett(\sigma)_*(\uniform{\reals})(T)=
\uniform{\reals}(\dett(\sigma)^{-1}(T))
= \uniform{\reals}(\reals)=\infty$ if $\dett(\sigma)$ is total.

For the converse, the assumption $\sigma(T)=\infty$ implies that $T$ is
nonempty.
Let $t_0\in T$.
Observe that we can apply Theorem~\ref{thm:defsfinite3} to obtain
a partial function $\dett(\sigma)': \reals\rightharpoonup T$ such that
$$\sigma=\dett(\sigma)'_*( \uniform{\reals}).$$
Now, define $\dett(\sigma)''(t):=t_0$ if $\dett(\sigma)'(t)=\bot$ and
$\dett(\sigma)''(t):=\dett(\sigma)'(t)$ otherwise.
Then, $\dett(\sigma)'': \reals\to T$ is a total measurable function.
Moreover, if $D:=\dett(\sigma)'^{-1}(T)$, then
$\uniform{D}$ defines an s-finite measure on $\reals$ and
$\dett(\sigma)''_*\uniform{D}=\sigma$.
Moreover, $\uniform{D}(\reals)=\infty$.
Let $I_{-1}:=\emptyset$ and, for $n\in\naturals$, define
$I_n=[-r_n,r_n]$, where
$r_n:=\inf \{r\in[0,\infty] \mid \uniform{D}([-r,r])\geq n+1\}$.
By continuity of $r\mapsto \uniform{D}([-r,r])$, the sets
$D_n:=(I_n\setminus I_{n-1})\cap D$ satisfy
$D=\biguplus_{n\in\naturals} D_n$ and $\uniform{D_n}$ is a probability measure
for all $n\in\naturals$.
Therefore, Lemma~\ref{lem:randomisation} gives us total measurable functions
$f_n:[0,1]\to \reals$ such that $(f_n)_*\uniform{[0,1]} = \uniform{D_n}$.
Now, define $g:\reals\to\reals$, $g=[r\in [-n/2-1/2,-n/2)\mapsto f_n(r+n/2+1),
r\in [n/2,n/2+1/2)\mapsto f_n(r-n/2)]_{n\in\naturals}$.
Then, $g_*\uniform{\reals}=\sum_{n\in \naturals} (f_n)_*\uniform{[0,1]}
=\sum_{n\in\naturals}\uniform{D_n}
= \uniform{D}$.
For $\dett(\sigma):\reals\to T$ defined by $\dett(\sigma):=
g;\dett(\sigma)''$, it follows that 
$\dett(\sigma)_*\uniform{\reals}=\sigma$, while
$\dett(\sigma):\reals\to T$ is a total measurable function.
\end{proof}
In particular, there exists a measurable function
$f:\reals\to\reals\times \reals$ such that
$\uniform{\reals}\otimes\uniform{\reals}=f_*\uniform{\reals}$.

One reason these results are relevant is that they illustrate that
one quickly obtains the whole class of s-finite kernels if one starts from
a rather limited set of primitives and closes them under composition.
In particular, even a probabilistic programming language with a set of
primitives that at first sight may seem rather limited is expressive enough
to construct all s-finite kernels: a call-by-value language with constants for
all measurable
functions, a random number generator $\mathsf{sample}\, \uniform{[0,1]}$
for drawing from $\uniform{[0,1]}$, and a $\mathsf{score}$ construct for enforcing soft constraints suffices.
Indeed, following the argument in \citep{staton2017commutative},
\begin{itemize}
\item we can
define all measurable partial functions using measurable functions and
$\mathsf{score}$;
\item by Lemma~\ref{lem:subrandomisation},
we can define all subprobability kernels using
$\mathsf{sample}\, \uniform{[0,1]}$ and measurable
partial functions,
so, in particular, we can define the Poisson distribution
$\mathsf{sample}\,\mathsf{poisson}(1)$;
\item using $\mathsf{score}$, we can define
$\mathsf{sample}\,\counting_\naturals$ using the
previously outlined importance sampling procedure with
$\mathsf{sample}\,\mathsf{poisson}(1)$,
which, by Theorem~\ref{thm:defsfinite}, then lets us construct all s-finite
kernels.
\end{itemize}
In fact, given that s-finite kernels are closed under composition 
(Theorem~\ref{thm:sfincomp}) and under soft constraints (part 3 of Theorem~\ref{thm:charac-sfinite}), it follows that these primitives define precisely
the class of s-finite kernels.

\section{S-Finite Kernels Between Quasi-Borel Spaces}
As argued in \citep{staton2017commutative},
one can obtain a perfectly good denotational semantics of first-order
(fine-grained call-by-value) probabilistic programming languages by interpreting
the types as standard Borel spaces, the pure terms (complex values) as
measurable functions and the effectful terms (computations) as s-finite kernels.
However, it is a classical result in measure theory that the category of standard
Borel spaces (or that of measure spaces) and measurable functions is not
cartesian closed \citep{aumann1961borel}.
This has led \citep{heunen2017convenient} to introduce a more general notion of
space called \emph{quasi-Borel spaces} to interpret higher-order probabilistic
programming languages.
In this section, we shall prove analogues of the Radon-Nikod\'ym and
Lebesgue decomposition theorems for quasi-Borel spaces.

Briefly, the category of quasi-Borel spaces is defined as the category of
concrete sheaves (in the sense of \citep{baez2011convenient}) on the category
of standard Borel spaces and measurable functions with countable measurable
covers as its Grothendieck topology.
This immediately shows that the category of quasi-Borel spaces is a Grothendieck
quasi-topos and, in particular, is complete, cocomplete and cartesian closed.

Concretely, we recall the definition of a quasi-Borel space.
\begin{definition}[\citep{heunen2017convenient}]
A \emph{quasi-Borel space (qbs)} is a set $X$ together with
a set of functions $M_X\subseteq X^\reals$ (called the \emph{random elements})
such that 
\begin{itemize}
\item[(const)] all constant functions are in $M_X$;
\item[(comp)] $M_X$ is closed under
precomposition with measurable functions on $\reals$;
\item[(sheaf)] $M_X$ is closed under countable measurable case distinctions:
if $\reals=\biguplus_{i\in\naturals}U_i$, where $U_i\in\Sigma_\reals$
and $\alpha_i\in M_X$ for all $i$, then
$[\alpha_i|_{U_i}]_{i\in\naturals}$ is in $M_X$.
\end{itemize}

A \emph{morphism} $f\colon X\to Y$ is a function that respects the structure,
i.e.~if $\alpha\in M_X$ then $\alpha;f\in M_Y$.
Morphisms compose as functions, and we have a category $\Qbs$.

A qbs $X$ is a \emph{subspace} of a qbs $Y$ if $X\subseteq Y$
and $M_X=\set{\alpha:\reals\to X~\mid~\alpha\in M_Y}$.
\end{definition}
As discussed above, the category $\Qbs$ turns out to be complete, cocomplete
and cartesian closed.
Moreover, we can compare qbses to measurable spaces, as follows.

\begin{theorem}[\citep{heunen2017convenient}]
We have an adjunction
$$
\Sigma: \Qbs \rightleftarrows \Meas : M,
$$
with $\Sigma\dashv M$,
which restricts to an adjoint equivalence from the full subcategory
$\Sbs\subseteq \Meas$ to\\ $M[\Sbs]\subseteq \Qbs$.
The adjunction is compatible with the forgetful functors
$\Qbs\to\Set$ and $\Meas\to \Set$.
The qbs structure $M_X$ on a measurable space is defined as
$$
M_{X}:=\Meas(\reals,X)
$$
and the measurable space structure $\Sigma_X$ on a qbs $X$ is defined as
$$
\Sigma_{X}:=\{U\subseteq X \mid \forall \alpha\in M_X.
\alpha^{-1}(U)\in\Sigma_\reals\}. 
$$
That is, $\Sigma_X$ is the final $\sigma$-algebra w.r.t.\ the random elements
$M_X$ (i.e.\ the largest $\sigma$-algebra such that the random elements are
measurable functions).
\end{theorem}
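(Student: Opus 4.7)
The plan is to verify, in order: first, that $M$ and $\Sigma$ as defined are well-defined functors; second, that there is a natural hom-set bijection $\Meas(\Sigma X, Y) \cong \Qbs(X, MY)$; and third, that this adjunction restricts to an equivalence between $\Sbs$ and $M[\Sbs]$. Compatibility with the forgetful functors to $\Set$ is essentially tautological, since both $M$ and $\Sigma$ modify only the ambient structure, not the underlying set of points.

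For the functoriality of $M$, I would check that $(X, \Meas(\reals, X))$ is a qbs: (const) constant maps $\reals \to X$ are measurable; (comp) measurable maps are closed under precomposition with measurable endomaps of $\reals$; (sheaf) measurable maps glue along countable measurable partitions of $\reals$, which is standard. A measurable $f : X \to Y$ yields a qbs morphism $MX \to MY$ by postcomposition. Dually, for a qbs $X$, the collection $\Sigma_X = \{U \subseteq X \mid \alpha^{-1}(U)\in\Sigma_\reals\ \forall\alpha\in M_X\}$ is a $\sigma$-algebra because preimages commute with complements and countable unions. Any qbs morphism $f : X \to Y$ is $(\Sigma X, \Sigma Y)$-measurable: for $U \in \Sigma Y$ and $\alpha \in M_X$, $\alpha^{-1}(f^{-1}(U)) = (\alpha;f)^{-1}(U) \in \Sigma_\reals$ since $\alpha;f \in M_Y$, so $f^{-1}(U) \in \Sigma X$.

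For the adjunction, my plan is to observe that both hom-sets consist of the same underlying functions $X \to Y$, namely those $f$ for which $\alpha;f$ is measurable for every $\alpha \in M_X$. From left to right: a measurable $f : \Sigma X \to Y$ composes with any $\alpha \in M_X$ (which is measurable $\reals \to \Sigma X$ by definition of $\Sigma X$ as the final $\sigma$-algebra for $M_X$) to yield a member of $M_{MY} = \Meas(\reals, Y)$. Conversely, if $f : X \to MY$ is a qbs morphism, then $(\alpha;f)^{-1}(U) \in \Sigma_\reals$ for every $\alpha \in M_X$ and $U \in \Sigma_Y$, so $f^{-1}(U) \in \Sigma X$, giving measurability of $f : \Sigma X \to Y$. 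Naturality in both variables is automatic because the bijection is literally the identity on underlying functions.

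For the restriction to an equivalence on $\Sbs$, I would show that the unit $\eta_X : X \to \Sigma M X$ is an isomorphism (of measurable spaces) whenever $X$ is standard Borel. The inclusion $\Sigma_X \subseteq \Sigma_{MX}$ is immediate. The reverse inclusion is where I expect the main obstacle to lie, and is where Kuratowski's characterisation does the essential work: since $X$ is a measurable retract of $\reals$, there exist measurable $X \xrightarrow{f} \reals \xrightarrow{g} X$ with $g \circ f = \id_X$; then for $U \in \Sigma_{MX}$, we have $g \in M_{MX}$, hence $g^{-1}(U) \in \Sigma_\reals$, and therefore $U = (g \circ f)^{-1}(U) = f^{-1}(g^{-1}(U)) \in \Sigma_X$. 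Hence $\eta_X$ is an iso. From this, the counit $\varepsilon_Y : M \Sigma Y \to Y$ is automatically an iso on $Y = MX$ for $X \in \Sbs$, since $\Sigma M X = X$ gives $M \Sigma M X = MX$. Together these yield an adjoint equivalence $\Sbs \simeq M[\Sbs]$.
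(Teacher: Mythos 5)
Your argument is correct and is essentially the standard proof of this result, which the paper itself only cites from \citep{heunen2017convenient} without reproducing: the key points are exactly that $\Meas(\Sigma X,Y)$ and $\Qbs(X,MY)$ are literally the same subset of $\Set(|X|,|Y|)$ (so the bijection is the identity and naturality is automatic), and that the retract characterisation of standard Borel spaces forces $\Sigma_{MX}=\Sigma_X$, giving the required (co)unit isomorphisms and hence the adjoint equivalence on $\Sbs$. One terminological nit: for $\Sigma\dashv M$ with $\Sigma$ the left adjoint, the comparison map $\Sigma MX\to X$ at a measurable space $X$ is the counit rather than the unit, though since all the maps involved are identities on underlying sets this does not affect your argument.
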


While qbs-morphisms play the role of measurable functions, we can also introduce
an equivalent notion of s-finite kernel for qbses.
In fact, these turn out to arise as the Kleisli morphisms for a commutative
monad $T$.
The idea is to use the randomisation lemma as a definition.
\begin{definition}[(Randomisable) S-finite Measure
\citep{ScibiorKVSYCOMHG18}]
For a qbs $X$, we can define an \emph{(randomisable) s-finite measure}
to be a triple $\langle \Omega, \mu, \alpha\rangle$ of a standard Borel space
$\Omega$, a morphism $\alpha\in \Qbs(\Omega, X)$
and an s-finite measure $\mu$ on $\Omega$.
We can define the integral of any qbs-morphism $f\in \Qbs(X,[0,\infty])$ w.r.t.
$\langle\Omega,\mu, \alpha\rangle$:
\begin{align*}
\int_X:\left(\sum_{\Omega\in \mathsf{ob}(\Sbs)}
\sum_{\mu\;\text{s-finite measure on } \Omega}
\Omega\Rightarrow X\right) &\to
(X\Rightarrow [0,\infty])\Rightarrow [0,\infty]\\
\langle \Omega, \mu,\alpha\rangle &
\mapsto \left(f\mapsto \int_\Omega \mu(\dif \omega)
f(\alpha(\omega))\right).
\end{align*}
Following the classical idea of Schwartz, this lets us identify
$\langle \Omega,\mu, \alpha\rangle$ and $\langle \Omega',\mu', \alpha'\rangle$
if they define the same integral operator in the sense that
$$
\int_X\langle \Omega,\mu, \alpha\rangle=
\int_X\langle \Omega',\mu', \alpha'\rangle
$$
and write $TX$ for the set of such equivalence classes
$[\Omega,\mu,\alpha]$.
Moreover, we can define the random elements
$$
M_{TX}:=\{\lambda r:\reals.[\Omega,k(r,-),\alpha(r,-)]
\mid k:\reals\leadsto \Omega\text{ s-finite kernel and }
\alpha\in\Qbs(\reals\times \Omega,X)\}
$$
to obtain a qbs $TX$.
\end{definition}

This definition gives us a straightforward way of making $T$ into a monad.
\begin{theorem}[Commutative Monad, \citep{ScibiorKVSYCOMHG18}]
$T$ is a commutative monad on $\Qbs$ under the monadic operations
inherited from the continuation monad
$$((-)\Rightarrow [0,\infty])\Rightarrow [0,\infty].$$
\end{theorem}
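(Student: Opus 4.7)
The strategy is to embed $T$ as a submonad of the continuation monad $CX := (X\Rightarrow [0,\infty])\Rightarrow [0,\infty]$, with $\int_X\colon TX\hookrightarrow CX$ serving as a qbs-morphic and (by construction) injective inclusion. Once this embedding is shown to be compatible with the unit and Kleisli extension of $C$, the associativity and unit laws of $T$ are inherited from those of $C$ for free; only commutativity requires genuine s-finite measure-theoretic input, namely the limited Fubini theorem (Theorem~\ref{thm:fubini}).

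First I would verify that $\int_X$ is itself a qbs-morphism $TX\to CX$. This reduces to showing that for any random element $r\mapsto [\Omega,k(r,-),\alpha(r,-)]$ of $M_{TX}$ and any $f\in \Qbs(X,[0,\infty])$, the map $r\mapsto \int_\Omega k(r,\dif \omega)\,f(\alpha(r,\omega))$ is measurable in $r$; this follows from s-finiteness of $k$ together with closure of s-finite kernels under composition and pushforward (Theorem~\ref{thm:sfincomp}). I would then exhibit the unit as $\eta_X(x):=[1,\delta_{*},\lambda *.x]$, which lands on $\lambda f.f(x)$ under $\int_X$.

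For the Kleisli extension of $h\colon X\to TY$ applied to $[\Omega,\mu,\alpha]\in TX$: the composite $\alpha;h\in\Qbs(\Omega,TY)$ can be put, using the defining shape of $M_{TY}$ together with closure of $\Sbs$ under countable products, into canonical form. That is, there exist a standard Borel space $\Omega'$, an s-finite kernel $k\colon\Omega\leadsto\Omega'$, and $\beta\in\Qbs(\Omega\times\Omega',Y)$ with $\alpha;h=\lambda\omega.[\Omega',k(\omega,-),\beta(\omega,-)]$. I would then set
\[
h^\dagger([\Omega,\mu,\alpha]):=[\Omega\times\Omega',\,\mu\otimes^l k,\,\beta],
\]
where $\mu\otimes^l k$ is a well-defined s-finite measure on $\Omega\times\Omega'$ by Theorem~\ref{thm:fubini}. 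A direct calculation shows $\int_Y\circ h^\dagger = (\int_Y\circ h)^\dagger\circ\int_X$ where the second dagger is the Kleisli extension of $C$, so the monad laws transfer. For commutativity, given $h_1\colon X_1\to TY_1$ and $h_2\colon X_2\to TY_2$, one has to show that the two orders of Kleisli-composing the strengths produce equal elements of $T(Y_1\times Y_2)$. Under $\int$, this reduces to the equality of iterated integrals $\int_{Y_1}\int_{Y_2}=\int_{Y_2}\int_{Y_1}$ applied to the underlying s-finite kernels on standard Borel spaces, which is exactly Theorem~\ref{thm:fubini}.

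The principal obstacle is representation-independence in the definition of $h^\dagger$: the data $(\Omega',k,\beta)$ depends on a choice of presentation of $\alpha;h$ as a random element of $TY$. The resolution is that the equivalence relation defining $TX$ is precisely the kernel of the integral operator, so injectivity of $\int_X$ lifts representation-independence from the continuation monad to $T$; once this is in place, the inherited monad laws and commutativity via Fubini are immediate. A secondary bookkeeping point is checking that such a canonical presentation of $\alpha;h$ exists globally on $\Omega$ and not merely locally on a countable measurable cover, which uses the sheaf axiom \textup{(sheaf)} together with countable coproducts in $\Sbs$.
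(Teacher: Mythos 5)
The paper states this theorem without proof, attributing it to \citep{ScibiorKVSYCOMHG18}; your proposal is a faithful reconstruction of the argument from that source: realise $T$ as a submonad of the continuation monad via the injective natural transformation $\int$, inherit the unit and associativity laws from there, and prove commutativity---which does \emph{not} hold for the continuation monad itself, so cannot be inherited---by reducing, under $\int$, to the limited Fubini theorem for s-finite kernels (Theorem~\ref{thm:fubini}). Your handling of representation-independence of $h^\dagger$ via injectivity of $\int$ is exactly right. Two small points of care: (i) to verify that $\int_X$ is a $\Qbs$-morphism into the exponential $(X\Rightarrow[0,\infty])\Rightarrow[0,\infty]$ you must check the \emph{uncurried} condition, namely measurability of $r\mapsto \int_\Omega k(\gamma(r),\dif\omega)\,\hat f\bigl(r,\alpha(\gamma(r),\omega)\bigr)$ for every random element $\hat f$ of $X\Rightarrow[0,\infty]$ and every measurable $\gamma:\reals\to\reals$, not merely measurability in $r$ for each fixed $f$; this still follows from Theorems~\ref{thm:sfincomp} and~\ref{thm:charac-sfinite}. (ii) The global canonical form of $\alpha\dcompose h$ on all of $\Omega$ is most cleanly obtained from the fact that every standard Borel space is a measurable retract of $\reals$ (so $\Qbs(\Omega,TY)$ is controlled by $M_{TY}$), rather than from the sheaf axiom; either route closes the gap you correctly identified.
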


% Here, we make a couple of observations that will let us simplify the definition
% of $TX$ which will put us in a better place to analyse the obtained notion of
% s-finite kernel for qbses.
% \begin{theorem}
% For a qbs $X$, $\Sigma_{X}$ is the initial $\sigma$-algebra with respect to
% $\Qbs(X,[0,\infty])$ (that is, the smallest $\sigma$-algebra on $X$, such that
% all $f\in \Qbs(X,[0,\infty])$ are measurable).
% \end{theorem}
% \begin{proof}
% \mv{Do.
% Basically, because the adjunction restricts to an adjoint equivalence on
% standard Borel spaces.}
% \end{proof}

Next, we give a new, simplified presentation of $TX$ and establish some results
about the elements of $TX$.
Observe that, for $[\Omega,\mu,\alpha]\in TX$, the morphism
$\alpha\in \Qbs(\Omega,X)$ yields $\alpha\in \Meas(\Omega, \Sigma_X)$
by applying the functor $\Sigma$ and abusing notation by simply writing
$\Omega$ for $\Sigma_\Omega$, since $\Omega$ is a standard Borel space.
This means that $\alpha_*\mu$ defines a measure on $\Sigma_X$ in the usual
measure-theoretic sense.
Moreover, by Theorem~\ref{thm:sfincomp}, this is an s-finite measure.
\begin{theorem}
$$\int_X\langle \Omega,\mu,\alpha\rangle=
\int_X \langle \Omega',\mu',\alpha'\rangle$$
iff
$$
\alpha_*\mu = \alpha'_*\mu'
$$
as measures on $\Sigma_X$.
That is, the equality on elements $\langle \Omega,\mu,\alpha\rangle$ is simply
the equality on the measures $\alpha_* \mu$ on $\Sigma_X$.
\end{theorem}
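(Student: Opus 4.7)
The plan is to prove both directions by leveraging the adjunction $\Sigma \dashv M$ together with the definition of $\Sigma_X$ as the final $\sigma$-algebra on a qbs $X$ with respect to its random elements $M_X$. The key technical lemma I want to isolate is that the qbs-morphisms $X \to [0,\infty]$ coincide exactly with the $\Sigma_X$-measurable functions $X \to [0,\infty]$: since $[0,\infty]$ is a standard Borel space we have $M_{[0,\infty]} = \Meas(\reals, [0,\infty])$, and then $f \in \Qbs(X, [0,\infty])$ iff $f\circ \alpha$ is measurable for every $\alpha \in M_X$, iff $f^{-1}(V) \in \Sigma_X$ for every Borel $V \subseteq [0,\infty]$ (by the very definition of $\Sigma_X$ as $\{U \subseteq X \mid \forall \alpha \in M_X.\; \alpha^{-1}(U) \in \Sigma_\reals\}$), iff $f$ is $\Sigma_X$-measurable. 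In particular, $\chi_A \in \Qbs(X, [0,\infty])$ for every $A \in \Sigma_X$.

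For the ``if'' direction I would simply apply the classical change-of-variables formula: assuming $\alpha_*\mu = \alpha'_*\mu'$ on $\Sigma_X$, then for any $f \in \Qbs(X,[0,\infty])$ (which by the lemma is $\Sigma_X$-measurable), we compute
\[
\int_\Omega \mu(\dif \omega)\, f(\alpha(\omega)) \;=\; \int_X (\alpha_*\mu)(\dif x)\, f(x) \;=\; \int_X (\alpha'_*\mu')(\dif x)\, f(x) \;=\; \int_{\Omega'} \mu'(\dif \omega')\, f(\alpha'(\omega')),
\]
so the associated integral operators on $\Qbs(X,[0,\infty])$ agree.

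For the ``only if'' direction, assume the two integral operators agree. Then by testing against $f = \chi_A$ for each $A \in \Sigma_X$ (permitted by the lemma), we obtain
\[
(\alpha_*\mu)(A) = \int_\Omega \mu(\dif \omega)\, \chi_A(\alpha(\omega)) = \int_{\Omega'} \mu'(\dif\omega')\, \chi_A(\alpha'(\omega')) = (\alpha'_*\mu')(A),
\]
so $\alpha_*\mu = \alpha'_*\mu'$ as measures on $\Sigma_X$.

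The main obstacle is really just the clean identification $\Qbs(X,[0,\infty]) = \Meas((X,\Sigma_X),[0,\infty])$; once that is in place, both directions are essentially formalities. The only subtlety I would flag is to verify that $(\alpha_*\mu)$ is a bona fide s-finite measure on $\Sigma_X$ (which holds because $\alpha \in \Qbs(\Omega,X)$ induces a measurable function $\Omega \to (X,\Sigma_X)$ via the adjunction $\Sigma \dashv M$, and pushforwards of s-finite measures along measurable functions are s-finite by Theorem~\ref{thm:sfincomp}), so that the change-of-variables formula applies unambiguously.
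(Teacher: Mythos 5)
Your proposal is correct and follows essentially the same route as the paper: both hinge on the identification $\Qbs(X,[0,\infty])=\Meas(\Sigma_X,[0,\infty])$ (the paper cites the adjunction $\Sigma\dashv M$, you unfold it via the final $\sigma$-algebra), followed by the change-of-variables/Frobenius formula and the fact that equality of integrals against all measurable nonnegative functions is equivalent to equality of the pushforward measures. Your testing against characteristic functions for the ``only if'' direction is just the explicit form of the paper's appeal to simple functions, so there is nothing substantively different to report.
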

\begin{proof}
The equality of elements $\langle \Omega,\mu,\alpha\rangle$ and 
$\langle \Omega',\mu',\alpha'\rangle$ 
of $TX$ is defined
to be the equality of all integrals 
$$\int_\Omega \mu(\dif \omega)
f(\alpha(\omega))=\int_{\Omega'} \mu'(\dif \omega)
f(\alpha'(\omega)),\qquad \forall f\in \Qbs(X,[0,\infty]).$$
Observe that $\Qbs(X,[0,\infty])=\Meas(\Sigma_X,[0,\infty])$ as $\Sigma\dashv M$,
where we abuse notation by writing $[0,\infty]$ for $M_{[0,\infty]}$.
Therefore, this equality also coincides with the equality of all integrals
$$\int_\Omega \mu(\dif \omega)
f(\alpha(\omega))=
\int_{\Omega'} \mu'(\dif \omega)
f(\alpha'(\omega)),\qquad \forall f\in \Meas(\Sigma_X,[0,\infty]).$$
Observe that the classical Frobenius reciprocity result from measure theory
\citep{schilling2017measures} tells us that
$$
\int_\Omega \mu(\dif \omega)
f(\alpha(\omega)) = \int_{\Sigma_X} \alpha_*\mu(\dif x) f(x).
$$
Therefore, the equality of elements of $TX$ is also equivalently characterised
as the equality of all integrals 
$$\int_{\Sigma_X} \alpha_*\mu(\dif x) f(x)=
\int_{\Sigma_X} \alpha'_*\mu'(\dif x) f(x),
\qquad \forall f\in \Meas(\Sigma_X,[0,\infty]).$$
However, as $f$ can be approximated by simple functions, this is precisely the
same as saying that
$$\alpha_*\mu=\alpha'_*\mu'$$
as measures on $\Sigma_X$.
\end{proof}
That is, we can think of $TX$ as the set of (s-finite) measures (in the usual
measure-theoretic sense) on $\Sigma_X$ obtained as a pushforward of
some s-finite measure $\mu$ on a standard Borel space $\Omega$ along a qbs
morphism $\alpha\in\Qbs(\Omega,X)$.
Similarly, the random elements of $TX$ are simply (s-finite) kernels
$\alpha_*k:\reals\leadsto \Sigma_X$
obtained by pushing forward some s-finite kernel
$k:\reals\leadsto \Omega$, where $\Omega$ is a standard Borel space,
along a qbs morphism $\alpha\in\Qbs(\Omega,X)$.
Applying Theorem~\ref{thm:defsfinite2} to randomise $\mu$ and
$k$, we now obtain the following corollary.
\begin{corollary} We have the following simplified description of $TX$ and
$M_{TX}$:
$$
TX=\{\beta_*\uniform{[0,\infty)} \in \textsf{s-finite measures}(\Sigma_X)
\mid \beta\in \Qbs([0,\infty), X+\{\bot\})\}
$$
where $\beta_*\uniform{[0,\infty)}$ denotes the partial pushforward that ignores
mass sent to $\bot$, and
$$
M_{TX}=\{\lambda r:\reals.\beta(r,-)_*\uniform{[0,\infty)}\in
\textsf{s-finite kernels}(\reals,\Sigma_X)
\mid \beta\in
\Qbs(\reals\times[0,\infty), X+\{\bot\})\}.
$$
\end{corollary}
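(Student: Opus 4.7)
The plan is to use the preceding theorem to reduce each element of $TX$ to its pushforward measure $\alpha_*\mu$ on $\Sigma_X$, and then to apply the randomisation lemma for s-finite measures and kernels (Theorem \ref{thm:defsfinite2}) to rewrite each such pushforward as a pushforward of $\uniform{[0,\infty)}$ along a qbs-morphism into $X+\{\bot\}$, at the cost of allowing the transport map to be partial.

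For the inclusion $\supseteq$ in the description of $TX$, I would start from $\beta\in\Qbs([0,\infty),X+\{\bot\})$ and set $\Omega:=\beta^{-1}(X)$, which is measurable in $[0,\infty)$ and hence standard Borel as a measurable subspace. The restriction $\alpha:=\beta|_\Omega$ is a qbs-morphism $\Omega\to X$, and $\mu:=\uniform{[0,\infty)}|_\Omega$ is an s-finite measure on $\Omega$. Since $\bot\notin X$, we have $\beta_*\uniform{[0,\infty)}=\alpha_*\mu$ as measures on $\Sigma_X$, which by the preceding theorem is the pushforward-measure representative of the element $[\Omega,\mu,\alpha]\in TX$.

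For the reverse inclusion, I would pick any representative $(\Omega,\mu,\alpha)$ of an element of $TX$ and apply Theorem \ref{thm:defsfinite2} (with $S=1$) to the s-finite measure $\mu$ on the standard Borel space $\Omega$ to obtain a measurable partial function $\gamma:[0,\infty)\rightharpoonup \Omega$ with $\gamma_*\uniform{[0,\infty)}=\mu$. Viewed as a total measurable function $[0,\infty)\to\Omega+\{\bot\}$ between standard Borel spaces, $\gamma$ is a qbs-morphism by the adjoint equivalence $\Sbs\simeq M[\Sbs]$. Post-composing with the qbs-morphism $\alpha+\id[\{\bot\}]:\Omega+\{\bot\}\to X+\{\bot\}$ then produces $\beta\in\Qbs([0,\infty),X+\{\bot\})$ with $\beta_*\uniform{[0,\infty)}=\alpha_*\gamma_*\uniform{[0,\infty)}=\alpha_*\mu$, so the preceding theorem identifies $\beta_*\uniform{[0,\infty)}$ with the given element of $TX$.

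The description of $M_{TX}$ will follow the same pattern, but using the kernel version of the randomisation lemma: given a random element $r\mapsto[\Omega,k(r,-),\alpha(r,-)]$ of $TX$, I would apply Theorem \ref{thm:defsfinite2} to the s-finite kernel $k:\reals\leadsto\Omega$ to produce $\delta:\reals\times[0,\infty)\rightharpoonup\Omega$ with $\delta(r,-)_*\uniform{[0,\infty)}=k(r,-)$, and then set $\beta(r,p):=\alpha(r,\delta(r,p))$ (with $\beta(r,p):=\bot$ when $\delta(r,p)=\bot$). The main obstacle throughout is to verify that each such combination of a measurable (partial) function between standard Borel spaces with the qbs-morphism $\alpha$ genuinely assembles into an element of $\Qbs(-,X+\{\bot\})$; this should reduce to the adjoint equivalence between $\Sbs$ and $M[\Sbs]\subseteq\Qbs$ together with the universal property of the coproduct $X+\{\bot\}$ in $\Qbs$, but one must be careful to set up the partial-function convention consistently, particularly in the parameterised kernel case where joint measurability of $\delta$ in $r$ and $p$ is needed.
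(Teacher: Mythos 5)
Your proposal is correct and follows essentially the same route as the paper, which derives the corollary by first identifying elements of $TX$ with the pushforward measures $\alpha_*\mu$ via the preceding theorem and then invoking the randomisation theorem (Theorem \ref{thm:defsfinite2}) to replace $\mu$ and $k$ by $\uniform{[0,\infty)}$. You have merely spelled out the two inclusions and the bookkeeping with $\bot$ that the paper leaves implicit.
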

This makes it immediately clear that $TX$ has the universal property of being
the categorical image (coequaliser of the kernel pair of) the map
\begin{align*}
\int_X:
[0,\infty)\Rightarrow (X+\{\bot\}) &\to
(X\Rightarrow [0,\infty])\Rightarrow [0,\infty]\\
\beta &
\mapsto \left(f\mapsto \int_{\Sigma_X}\beta_*\uniform{[0,\infty)}(\dif x)f(x)
\right)
\end{align*}
or, equivalently, the map (which is equal to the map above, by Frobenius 
reciprocity)
\begin{align*}
\beta &
\mapsto \left(f\mapsto \int_{[0,\infty)}\uniform{[0,\infty)}(\dif r)f(\beta(r))
\right),
\end{align*}
where the integrand is understood to be $0$ when $\beta(r)=\bot$.
We can also immediately apply Theorem~\ref{thm:defsfinite2}
to see that, at least for standard Borel spaces $X$, this reproduces our
usual notion of s-finite kernel, as every s-finite kernel between standard
Borel spaces is randomisable.
\begin{corollary}
If $X$ is a standard Borel space, then $TX$ consists precisely
of all s-finite measures
on $X$ and $M_{TX}$ consists of all s-finite kernels
$\reals\leadsto X$, in the measure-theoretic sense.
More generally, for standard Borel spaces $X,Y$, $X\Rightarrow TY$ consists
precisely of all s-finite kernels $X\leadsto Y$.
\end{corollary}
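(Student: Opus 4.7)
The first two assertions follow immediately from the preceding corollary. When $X$ is standard Borel, so is $X+\{\bot\}$, and the adjoint equivalence $\Sigma\dashv M$ restricted to $\Sbs$ identifies $\Qbs([0,\infty), X+\{\bot\})$ with $\Meas([0,\infty), X+\{\bot\})$, i.e.\ with the measurable partial functions $[0,\infty)\rightharpoonup X$. By Theorem~\ref{thm:defsfinite2}, pushforwards of $\uniform{[0,\infty)}$ along such partial functions enumerate precisely the s-finite measures on $X$. Substituting these identifications into the description of $TX$ from the preceding corollary gives the first claim. The second claim is obtained in the same way, replacing $[0,\infty)$ by $\reals\times[0,\infty)$ and using the kernel form of the randomisation lemma.

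For the third assertion, the plan is to show that the assignment $k\mapsto f$, $f(x):=k(x,-)$, is a bijection from s-finite kernels $X\leadsto Y$ to $\Qbs(X,TY)$. The map is well-defined: for $\alpha\in M_X=\Meas(\reals,X)$, we have $(\alpha;f)(r)=k(\alpha(r),-)=(\delta_\alpha\dcompose k)(r,-)$, which is an s-finite kernel $\reals\leadsto Y$ by Theorem~\ref{thm:sfincomp}, hence lies in $M_{TY}$ by the second assertion just established. For the converse, I would exploit Kuratowski's theorem to realise $X$ as a measurable retract of $\reals$, picking measurable $e:X\to\reals$ and $s:\reals\to X$ with $s\circ e=\id_X$. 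Both are qbs morphisms (since $\Sbs$ embeds fully faithfully into $\Qbs$ via $M$), so given any $f\in\Qbs(X,TY)$, the composite $s;f$ belongs to $M_{TY}$ and is witnessed by some s-finite kernel $\tilde k:\reals\leadsto Y$. Then $k:=\delta_e\dcompose\tilde k$ is s-finite by Theorem~\ref{thm:sfincomp}, and unwinding gives $k(x,-)=\tilde k(e(x),-)=f(s(e(x)))=f(x)$; so $k$ witnesses $f$ as an s-finite kernel, and composing the two assignments yields the identity both ways.

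The whole argument is essentially bookkeeping: the preceding corollary, the randomisation Theorem~\ref{thm:defsfinite2}, and the composition closure Theorem~\ref{thm:sfincomp} do all the work. The only mildly delicate step I anticipate is the last one, where invoking Kuratowski's retract characterisation of standard Borel spaces is what permits the reduction of $\Qbs(X,TY)$ to $M_{TY}$; the edge case $X=\emptyset$ is trivial, with both sides singletons. I do not expect any deeper obstacle.
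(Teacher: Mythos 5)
Your proposal is correct and follows the route the paper intends: the paper states this corollary without a written proof, as an immediate application of the preceding corollary's description of $TX$ and $M_{TX}$ together with Theorem~\ref{thm:defsfinite2}, which is exactly how you handle the first two clauses. For the third clause you supply the bookkeeping the paper leaves implicit (reducing $\Qbs(X,TY)$ to $M_{TY}$ via Kuratowski's retract $X \xrightarrow{e} \reals \xrightarrow{s} X$ and closing under composition with $\delta_e$, $\delta_s$ using Theorem~\ref{thm:sfincomp}), and this is the natural completion of the argument, including your correct observation that $X=\emptyset$ must be treated separately since it is not a retract of $\reals$.
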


For $\mu,\nu\in TX$, where $X$ is a qbs, let us define $\mu\ll\nu,
\mu\lli\nu,\mu\bot\nu,\mu\boti\nu$ as we would for any two measures
$\mu,\nu$ on $\Sigma_X$.
Moreover, recall from \citep{ScibiorKVSYCOMHG18} that we can add elements of
$TX$ (simply as measures) and that $X\Rightarrow T1$ has a natural right
action on $TX$ (observing that $T1=[0,\infty]$).
This action $\kact{-}{-}$ coincides with the usual action $\kact{-}{-}$
of measurable functions $\Sigma_X\to[0,\infty]$ on s-finite measures on
$\Sigma_X$ once we
observe that $\Qbs(X,[0,\infty])=\Meas(\Sigma_X,[0,\infty])$ (abusing notation
and simply writing $[0,\infty]$ for $M_{[0,\infty]}$ and recalling that
$\Sigma\dashv M$).
This puts us in a position to state and prove analogues of the
Radon-Nikod\'ym
and Lebesgue decomposition theorems for (randomisable) s-finite measures on
a qbs $X$.
\begin{theorem}[Radon-Nikod\'ym for Qbses]
Let $X$ be a qbs and let $\mu,\nu\in TX$.
Then, there exists a qbs morphism $f\in \Qbs(X,T1)$ such that
$$
\mu=\kact{\nu}{ f}
$$
iff
$$
\mu\lli \nu.
$$
\end{theorem}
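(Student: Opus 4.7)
The plan is to reduce the statement to the measure-theoretic Radon--Nikod\'ym theorem for s-finite measures (Theorem~\ref{thm:radon-nikodym}) by exploiting the preceding identification of $TX$ with a set of s-finite measures on $\Sigma_X$. By the adjunction $\Sigma\dashv M$, qbs morphisms $X\to T1$ are in bijection with measurable functions $\Sigma_X\to[0,\infty]$, and, as spelled out just before the theorem, the qbs action $\kact{-}{-}$ on $TX$ coincides with the classical measure-theoretic action of such functions on s-finite measures on $\Sigma_X$. Likewise, $\mu\lli\nu$ in $TX$ is defined to be the measure-theoretic relation on $\Sigma_X$.

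For the forward direction, given $f\in\Qbs(X,T1)$ with $\mu=\kact{\nu}{f}$, I would simply reinterpret this as an equation of s-finite measures on $\Sigma_X$ and invoke Lemma~\ref{lem:tri-abs-cts} to conclude $\mu\lli\nu$.

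For the reverse direction, given $\mu\lli\nu$ in $TX$, I would apply Theorem~\ref{thm:radon-nikodym} with ``source space'' taken to be the singleton $1$ (which is countable and discrete) and ``target space'' taken to be the measurable space $(X,\Sigma_X)$; viewing $\mu$ and $\nu$ as kernels $1\leadsto (X,\Sigma_X)$, the hypothesis $\mu\lli\nu$ transfers verbatim. This yields a jointly measurable Radon--Nikod\'ym derivative $h:1\times X\to[0,\infty]$, which amounts to a $\Sigma_X$-measurable function $h:X\to[0,\infty]$ satisfying $\mu=\kact{\nu}{h}$ as measures on $\Sigma_X$. By the adjunction, $h$ transposes to a qbs morphism $\tilde h\in\Qbs(X,T1)$, and $\mu=\kact{\nu}{\tilde h}$ in $TX$ follows from the identification of the two actions.

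The main subtlety is that the measurable space $(X,\Sigma_X)$ underlying a general qbs need not be standard Borel, so one might worry that Theorem~\ref{thm:radon-nikodym} is inapplicable. The key observation is that the joint-measurability clause of that theorem requires only countability-and-discreteness of the \emph{source} space or standard-Borelness of the target, and by turning the measures $\mu,\nu$ into kernels from the singleton~$1$ we satisfy the former for free. Once that trick is in hand, everything else reduces to bookkeeping in the adjunction $\Sigma\dashv M$ and in the already-established compatibility of $\kact{-}{-}$ across the qbs and measurable-space levels.
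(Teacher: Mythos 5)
Your proof is correct and takes essentially the same route as the paper: both reduce the statement to Theorem~\ref{thm:radon-nikodym} via the identifications $\Qbs(X,T1)=\Meas(\Sigma_X,[0,\infty])$, the inclusion of $TX$ into the s-finite measures on $\Sigma_X$, and the compatibility of the two actions $\kact{-}{-}$. You are in fact slightly more explicit than the paper in supplying the forward implication via Lemma~\ref{lem:tri-abs-cts} and in noting that the basic (pointwise-measurable) clause of Theorem~\ref{thm:radon-nikodym} already suffices for measures, so no standard-Borel hypothesis on $(X,\Sigma_X)$ is needed.
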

\begin{proof}
The existence direction is a special case of Theorem~\ref{thm:radon-nikodym},
and the converse follows from Lemma~\ref{lem:tri-abs-cts}, once we note that
\begin{enumerate}
\item we have
$$
\Qbs(X,T1)=\Qbs(X,[0,\infty])=\Meas(\Sigma_X,[0,\infty]);
$$
\item $TX$ is a subset of the s-finite measures on $\Sigma_X$;
\item the relation $\lli$ on $TX$ is simply that inherited from measures on
$\Sigma_X$;
\item the action $\kact{-}{-}$ of $\Qbs(X,T1)$ on $TX$ coincides with the
usual action $\kact{-}{-}$ of $\Meas(\Sigma_X,[0,\infty])$ on s-finite
measures on $\Sigma_X$.
\end{enumerate}
\end{proof}
Similarly, RN-derivatives on qbses inherit the same uniqueness properties from
those for measurable spaces in Theorem~\ref{thm:uniquern}.

\begin{theorem}[Lebesgue Decomposition for Qbses]
Let $\mu,\nu\in TX$.
Then, $\mu$ decomposes uniquely as a sum of three mutually singular components
$$
\mu=\mu_a+\mu_\infty+\mu_s,
$$
where $\mu_a,\mu_\infty,\mu_s\in TX$ and
$\mu_a\lli \nu$, $\mu_\infty\boti \nu$ and $\mu_s\bot \nu$. 
\end{theorem}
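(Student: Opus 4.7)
The plan is to reduce to Theorem~\ref{thm:lebesguedecomp} by viewing $\mu$ and $\nu$ as s-finite measures on the underlying measurable space $\Sigma_X$ (legitimate by the corollary identifying $TX$ with a subset of the s-finite measures on $\Sigma_X$), and then to verify that each component of the resulting decomposition remains randomisable, so it lies back in $TX$.

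First, because a singleton source is countable and discrete, Theorem~\ref{thm:lebesguedecomp} applies with $Z=1$ to $\mu,\nu$ regarded as s-finite measures on $\Sigma_X$. It yields a decomposition $\mu=\mu_a+\mu_\infty+\mu_s$ of mutually singular s-finite measures on $\Sigma_X$, together with its uniqueness, where $\mu_a\lli\nu$, $\mu_\infty\boti\nu$ and $\mu_s\bot\nu$. Since the relations $\lli$, $\boti$, $\bot$ on $TX$ are by definition inherited from $\Sigma_X$, the decomposition has the correct semantic properties.

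Next, mutual singularity furnishes a measurable partition $A_a\uplus A_\infty\uplus A_s$ of $X$ such that $\mu_i$ is concentrated on $A_i$; after possibly enlarging $A_s$ by the $\mu$-null complement of $A_a\cup A_\infty\cup A_s$, I may take these three sets to partition $X$. Then $\mu_i=\mu|_{A_i}$ on $\Sigma_X$. To exhibit each $\mu_i$ as an element of $TX$, I pick a representative $\mu=[\Omega,\tilde{\mu},\alpha]$ with $\Omega$ standard Borel, $\tilde{\mu}$ s-finite on $\Omega$, and $\alpha\in\Qbs(\Omega,X)\subseteq\Meas(\Omega,\Sigma_X)$. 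Then $\alpha^{-1}(A_i)\in\Sigma_\Omega$, the restriction $\tilde{\mu}|_{\alpha^{-1}(A_i)}$ is still s-finite on $\Omega$, and a direct pushforward calculation gives $\alpha_*(\tilde{\mu}|_{\alpha^{-1}(A_i)})(U)=\tilde{\mu}(\alpha^{-1}(U\cap A_i))=\mu(U\cap A_i)=\mu_i(U)$, so that $\mu_i=[\Omega,\tilde{\mu}|_{\alpha^{-1}(A_i)},\alpha]\in TX$. Uniqueness in $TX$ then follows immediately from the uniqueness clause of Theorem~\ref{thm:lebesguedecomp}, since any alternative $TX$-decomposition is in particular a Lebesgue decomposition of $\mu$ against $\nu$ as measures on $\Sigma_X$.

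The main, and essentially only, point needing care is the transfer of the measure-theoretic decomposition back into the qbs setting, i.e., verifying that each component is still representable as a pushforward of an s-finite measure on a standard Borel space along a qbs-morphism. Once one notices that pulling the partition $\{A_a,A_\infty,A_s\}$ back along any randomising $\alpha$ cuts the ambient representative $\tilde{\mu}$ into three s-finite pieces whose pushforwards give exactly $\mu_a,\mu_\infty,\mu_s$, there is no deeper obstruction.
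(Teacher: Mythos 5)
Your proof is correct and follows essentially the same route as the paper: reduce to the s-finite Lebesgue decomposition theorem (Theorem~\ref{thm:lebesguedecomp}) for the underlying measures on $\Sigma_X$, inherit uniqueness from there, and then pull the mutual-singularity partition back along a representative of $\mu$ to exhibit each component as an element of $TX$. The only (harmless) difference is that you restrict the s-finite measure $\tilde{\mu}$ on $\Omega$ to the preimages $\alpha^{-1}(A_i)$, whereas the paper first randomises $\mu$ as $\beta_*\uniform{[0,\infty)}$ and instead cuts the morphism $\beta$, redirecting $[0,\infty)\setminus\beta^{-1}(S_i)$ to $\bot$ via a measurable case distinction.
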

\begin{proof}
Since $\mu$ and $\nu$ are, in particular, s-finite measures on $\Sigma_X$,
we can appeal to Theorem~\ref{thm:lebesguedecomp} to obtain a unique
decomposition of $\mu$ as a sum of three mutually singular components
$$
\mu=\mu_a+\mu_\infty+\mu_s,
$$
where $\mu_a\lli \nu$, $\mu_\infty\boti \nu$ and $\mu_s\bot \nu$.

It remains to show that $\mu_a,\mu_\infty,\mu_s$ are also
elements of $TX$, rather than merely s-finite measures on $\Sigma_X$.
To see this, it suffices
to exhibit elements $\beta_a,\beta_\infty,\beta_s\in
\Qbs([0,\infty),X+\{\bot\})$ such that
\begin{align*}
\mu_a &= \beta_a{}_* \uniform{[0,\infty)}\\
\mu_\infty &= \beta_\infty{}_* \uniform{[0,\infty)}\\
\mu_s &= \beta_s{}_* \uniform{[0,\infty)}.
\end{align*}
Observe that there is some $\beta\in \Qbs([0,\infty),X+\{\bot\})$ such that
$\beta_*\uniform{[0,\infty)}=\mu$, since $\mu\in TX$.
Now, as $\mu_a,\mu_\infty,\mu_s$ are mutually singular measures on $\Sigma_X$,
we get a measurable partition of $X$ as $S_a\biguplus S_\infty\biguplus
S_s$ such that $\mu_a$ is supported in $S_a$, $\mu_\infty$ is supported in
$S_\infty$ and $\mu_s$ is supported in $S_s$.
Applying the functor $\Sigma$ (which as a left adjoint preserves coproducts),
we observe that $\beta$ is, in particular, a measurable function
$[0,\infty)\to \Sigma_X+\{\bot\}$.
Therefore, $R_a:=\beta^{-1}(S_a), R_\infty:=\beta^{-1}(S_\infty)$ and
$R_s:=\beta^{-1}(S_s)$ define measurable subsets of $[0,\infty)$.
Observe that the (finite) measurable case distinction
$\beta_i:=[R_i.\beta, [0,\infty)\setminus R_i. \lambda
r. \bot]$ defines an element of $\Qbs([0,\infty), X+\{\bot\})$ by the qbs
axioms (const) and (sheaf) and it follows immediately that
$\beta_i{}_*\uniform{[0,\infty)}=\mu_i$, where $i\in \{a,\infty,s\}$.
This concludes the proof.
\end{proof}

It is at present not clear to the authors whether or how a disintegration theorem
can be established for qbses, or whether the Radon-Nikod\'ym
and Lebesgue decomposition theorems can be generalised to s-finite kernels between qbses.

\bibliography{bib/references}
\bibliographystyle{plainnat}

\end{document}
%  LocalWords:  colimiting colimit LocalWords